\newcommand{\PreserveBackslash}[1]{\let\temp=\\#1\let\\=\temp}
\newcolumntype{C}[1]{>{\PreserveBackslash\centering}p{#1}}
\newcolumntype{R}[1]{>{\PreserveBackslash\raggedleft}p{#1}}
\newcolumntype{L}[1]{>{\PreserveBackslash\raggedright}p{#1}}
\theoremstyle{plain}
\newtheorem{thm}{Theorem}[section]
\newtheorem{lem}[thm]{Lemma}
\newtheorem{prop}[thm]{Proposition}
\newtheorem{cor}[thm]{Corollary}
\newtheorem*{thm*}{Theorem}
\theoremstyle{remark}
\newtheorem{rem}[thm]{Remark}
\newcommand{\E}{\mathbb{E}}
\newcommand{\D}{{\rm d}}
\newcommand{\dd}{{\partial}}
\newcommand{\PP}{{\mathbb{P}}}
\newcommand{\bx}{{\mathbf x}}
\newcommand{\bX}{{\mathbf X}}
\newcommand{\bP}{{\mathbf P}}
\newcommand{\bE}{{\mathbf E}}
\newcommand{\bI}{\mathbf{1}}
\newcommand{\BD}{\texttt{B}}
\newcommand{\DD}{\texttt{D}}
\newcommand{\ind}{{\mathbf 1}}
\begin{document}

\begin{frontmatter}
\title{Binary branching processes with Moran type interactions}
\runtitle{Binary branching processes with Moran type interactions}

\begin{aug}
\author[A]{\inits{A. M. G.}\fnms{Alexander M. G.}~\snm{Cox}\ead[label=e1]{a.m.g.cox@bath.ac.uk}\orcid{0000-0001-5151-9126}},
\author[B]{\inits{E.}\fnms{Emma}~\snm{Horton}\ead[label=e2]{emma.horton@warwick.ac.uk}\orcid{0000-0002-6027-2169}}
\and
\author[C]{\inits{D.}\fnms{Denis}~\snm{Villemonais}\ead[label=e3]{denis.villemonais@univ-lorraine.fr}\orcid{0000-0002-7534-1884}}
\address[A]{Department of Mathematical Sciences, University of Bath, Claverton Down, Bath, BA2 7AY, UK. \printead[presep={,\ }]{e1}}

\address[B]{Department of Statistics, University of Warwick, Coventry, CV4 7AL.\printead[presep={,\ }]{e2}}

\address[C]{Institut \'Elie Cartan de Lorraine, Bureau 123, Universit\'e de Lorraine, 54506, Vandoeuvre-l\`es-Nancy Cedex, France.\printead[presep={,\ }]{e3}}
\end{aug}

\begin{abstract}
The aim of this paper is to study the large population limit of a 
new collection of interacting particle systems (IPS) that encompasses branching models and fixed size Moran type IPS (see ~\cite{DelMoral2004,DelMoral2013,BurdzyHolystEtAl1996,BurdzyHolystEtAl2000,Villemonais2014}). We define an IPS where particles evolve, reproduce and die independently and, with a probability that may depend on the configuration of the whole system, the death of a particle may trigger the reproduction of another particle, while a branching event may trigger the death of an other one. We study the occupation measure of the new model, explicitly relating it to the Feynman-Kac semigroup of the underlying Markov evolution and quantifying the $L^2$ distance between their normalisations. We show that our model outperforms the fixed size Moran type IPS when used to approximate a birth and death process. We discuss several other applications of our model including the neutron transport equation~\cite{HortonEtAl2018, cox2020monte} and population size dynamics.
\end{abstract}

\begin{abstract}[language=french]
L'objectif de cet article est d'étudier la limite en grande population d'une nouvelle famille de systèmes de particules en interaction (IPS) qui englobe les modèles de branchement et les IPS de type Moran de taille fixe (voir ~\cite{DelMoral2004,DelMoral2013,BurdzyHolystEtAl1996,BurdzyHolystEtAl2000,Villemonais2014}). Nous définissons un IPS où les particules évoluent, se reproduisent et meurent indépendamment et, avec une probabilité qui peut dépendre de la configuration du système entier, la mort d'une particule peut déclencher la reproduction d'une autre particule, tandis qu'un événement de branchement peut déclencher la mort d'une autre particule. Nous étudions la mesure d'occupation du nouveau modèle, en la reliant explicitement au semigroupe de Feynman-Kac de l'évolution markovienne sous-jacente et en quantifiant la distance $L^2$ entre leurs normalisations. Nous montrons que notre modèle est plus performant que l'IPS de Moran de taille fixe lorsqu'il est utilisé pour approximer un processus de naissance et mort. Nous discutons de plusieurs autres applications de notre modèle, notamment l'équation de transport des neutrons~\cite{HortonEtAl2018, cox2020monte} et la dynamique de la taille des populations.
\end{abstract}

\begin{keyword}[class=MSC]
\kwd{82C22}
\kwd{82C80}
\kwd{65C05}
\kwd{60J25}
\kwd{92D25}
\kwd{60J80}
\end{keyword}

\begin{keyword}
\kwd{interacting particle systems}
\kwd{branching processes}
\kwd{many-to-one}
\kwd{Markov processes}
\kwd{birth-and-death process}
\kwd{Moran model}
\end{keyword}

\end{frontmatter}

\section{Introduction}
Branching processes naturally arise as pertinent models in population size dynamics~\cite{Jagers1989,Jagersothers1995, Lambert2005}, neutron transport~\cite{CoxHarrisEtAl2019}, genetic dynamics~\cite{Marshall2008}, growth-fragmentation processes \cite{bertoin2017markovian, bertoin2019feynman} and cell proliferation kinetics~\cite{Yanev2010}, and as theoretical objects in their own right~\cite{Jagers1989,Dawson1993,IkedaNagasawaEtAl1968,IkedaNagasawaEtAl1968a,IkedaNagasawaEtAl1969}. These models are characterised by the independence of the branching and killing events in the system, which leads to a multiplicative behaviour, as well as interest in the scaling behaviour and the characterisation of the system at large times.
On the other hand, processes with Moran type interactions are natural models for finite populations with either variety-increasing or variety-reducing effects such as genetic drift, genetic mutations and natural selection. First introduced by Moran~\cite{Moran1958}, the Moran model describes the evolution of $N$ genes such that at an exponential rate, two particles are chosen uniformly at random from the population, one of which is killed and the other splits in two. Thus, the independence between particles is lost. 
We refer the reader to~\cite{Etheridge2011} and references therein for an overview of this model and of its extensions. This type of resampling has since been employed in a range of particle system models to numerically solve Feynman-Kac models, see~\cite{CloezCorujo2021, del2000moran, del2000branching, Villemonais2014} and references therein. 

The model we consider in this paper provides a combination of these two types of dynamics: (natural) branching and killing, as well as Moran type interactions. More precisely, when the system is initiated from $N$ particles, each particle evolves according to an independent copy of a given Markov process, $X$, until either a (binary) branching or killing event happens.  Here, binary refers to the fact that the particle is replaced by exactly two independent copies of itself.  If such a branching event occurs, with a probability that may depend on the configuration of the whole system, another particle is removed from the system a selection mechanism. Similarly, if a killing event occurs, with a probability which may also depend on the configuration of the whole system, another particle is duplicated via a resampling mechanism. We refer to this model as the {\it \color{black} binary branching model with Moran interactions}, or BBMMI for short.

Our main result (cf. Theorem~\ref{thm:main}) gives an explicit relation between the average of the empirical distribution of the particle system and the average of the underlying Markov process $X$. Indeed, letting $m_T$ denote the empirical distribution of the particle system at time $T$, we show that for any $T \ge 0$,
\begin{equation}
\E\left[\Pi_T^{A}\,\Pi_T^{B} m_T f\right] = \bE_{m_0}\left[f(X_{t})\exp\left(\int_0^{t} b(X_{s}) - \kappa(X_{s})\,\D s\right)\mathbf{1}_{t<\tau_\partial}\right],
\label{M21-intro}
\end{equation}
where $b$ is the branching rate, $\kappa$ is the soft killing rate, $\tau_\partial$ is the absorption time, $\Pi_T^A$ and $\Pi_T^B$ are weights that compensate for the resampling and selection events that occur up to time $T$, which we state explicitly in Theorem~\ref{thm:main}, and $\bE_{\cdot}$ is the expectation with respect to the law of $X$. As we will later discuss, by choosing particular model parameters, we recover both the classical many-to-one formula~\cite{HarrisRobertsEtAl2017} and the unbiased estimator proved in~\cite{Villemonais2014} for fixed size Moran type genetic algorithms. The second part of Theorem~\ref{thm:main} then provides a precise bound on the $L^2$ distance between the normalised semigroup on the right-hand side of~\eqref{M21-intro} and the normalised occupation measure of the particle system. In particular, it states that this distance converges to zero as the number of initial particles tends to infinity. 

In the third section of this article, we illustrate several applications of this model by focussing on a particular selection/resampling mechanism in our model. More precisely, we consider a binary branching process whose population size is constrained to remain in $\{N_{min},\ldots,N_{max}\}$, where $2\leq N_{min}\leq N_{max}<+\infty$. In order to constrain the size of the process, when the size of the population reaches $N_{max}$ (resp. $N_{min}$) and a natural branching (resp. killing) event occurs, we set the probability of selection (resp. resampling) to be $1$. We often refer to this model as the $N_{min}$--$N_{max}$ model. As will be clear in the next section, the choice $N_{min}=0$ and $N_{max}=+\infty$ is also allowed, so that the collection of models we consider ranges from constant population size models with neither branching nor killing (corresponding to the choice $N_{min}=N_{max}$) to branching population models (when $N_{min}=0$ and $N_{max}=+\infty$) with or without a selection/resampling mechanism.

For comparable results concerning the resampling algorithm, we refer the reader to~\cite{FerrariMaric2007,Rousset2006,CloezThai2016,OcafrainVillemonais2017} and to~\cite{delyon2017central, del2001asymptotic,LelievrePillaud-VivienEtAl2018} for associated central limit theorems. Whether a central limit theorem can be proved in our setting remains open, but could be approached by carefully studying the square increments of the martingale decomposition used in the proof of Theorem~\ref{thm:main}.

Our model and results are also reminiscent of the genetic algorithms introduced by Del Moral (see~\cite{DelMoral2004,DelMoral2013} and references therein), where a particle system with killing is constrained to remain at a constant size via a resampling mechanism. We also refer the reader to~\cite{BurdzyHolystEtAl1996,BurdzyHolystEtAl2000,GrigorescuKang2004,Rousset2006,Villemonais2011,CloezCorujo2021} and references therein. 

Finally, we note that our model fits into the more general class of controlled branching processes introduced by Sevastyanov and Zubkov in \cite{SevastyanovZubkov1974}, where the number of reproductive individuals in each generation depends on the size of the previous generation via a control function. This work was later extended by Yanev \cite{yanev1976} to allow for random control functions. We refer the reader to \cite{SevastyanovZubkov1974,GonzalezMolinaEtAl2002,VelascoGarciaEtAl2017, yanev2014critical} for discrete time versions of this process and to \cite{GarciaYanevEtAl2021} for the continuous time version. In these articles, the authors study the convergence of the survival probability in different regimes, as well as the expected population size. We also note that the control function can be seen as a way to model immigration and emigration of particles, \cite{yanev2014critical}. We refer the reader to \cite{Vatutin1978, KawazuWatanabe1971,Olofsson1996,Li2006,Lambert2008,BarczyBezdanyEtAl2021,Ban21} for results regarding these latter processes.

The rest of the paper is set out as follows. In the next section, we introduce some notation and assumptions, and formally describe the particle system introduced above. We will then state our main result in Theorem~\ref{thm:main} and discuss several implications. 

In section~\ref{sec:apps}, we focus on the $N_{min}$--$N_{max}$ model and consider a variety of applications in order to illustrate the scope of our results, as well as some of the possible difficulties and extensions. 
In particular, in Section~\ref{sec:BD}, we describe and study a stochastic population model with constrained size, based on the $N_{min}$--$N_{max}$ model. We give a sufficient condition ensuring that the number of resampling events does not explode in finite time when the killing rate of $X$ is unbounded. We also relate the long-time convergence of the empirical distribution of the population to the existence of a quasi-stationary distribution for an auxiliary sub-Markov process related to $X$ and show the uniform in time convergence of the renormalised empirical distribution of the particle system toward its conditional distribution, at a polynomial speed when $\sqrt{N_{max}}/N_{min}$ goes to infinity. We prove these results in the particular setting of this model, but our methods could be extended to a more general setting.

In Section~\ref{sec:PDMP}, we deal with the particular case where the underlying Markov process is a piecewise deterministic Markov process (PDMP). In this case, it is possible for two particles to be killed at the same time, meaning that our main result is not directly applicable without further work. Working in the particular setting of the neutron transport equation (NTE), see \cite{HortonEtAl2018, cox2020monte}, we overcome this difficulty using the notion of $h$-transform introduced in \cite{cox2020monte} for the NTE. We show that particles whose dynamics are given by an appropriately $h$-transformed process fulfil the assumptions of our main theorem, and thus, we may use the $N_{min}$--$N_{max}$ model with the $h$-transformed process in order to obtain estimates for quantities associated to the original process.

In Section~\ref{sec:numerics}, we focus on some numerical properties of the resampling/selection process. First, we make comparisons between the $N_{min}$--$N_{max}$ model and the fixed size Moran type interacting particle system (IPS) for the approximation of quantities related to non-conservative semigroups (cf.~\cite{DelMoral2004,DelMoral2013,BurdzyHolystEtAl1996,BurdzyHolystEtAl2000}), which corresponds to the particular case $N_{min}=N_{max}$ with no branching. We observe that in the case of a birth-and-death process, $X$, with branching rate given by $X_t\wedge M$ for some $M>0$ and state-dependent Markov transitions, the computational cost, measured as the number of resampling/selection events, is significantly lower when using the $N_{min}$--$N_{max}$ algorithm. We also study the bias and standard deviation of an estimator for the normalised empirical stationary distribution for each of the $N_{min}$--$N_{max}$ and fixed size Moran type IPS. Letting $M$ go to infinity, we provide numerical evidence that the number of interactions, bias and standard deviation stabilise for the $N_{min}$--$N_{max}$ process, whereas these quantities grow linearly for the fixed size Moran type IPS process. Based on these simulations, we conjecture that the results of Section~\ref{sec:mainresult} hold true in situations where the branching rate is unbounded and thus where the Moran type process is undefined. 
In the final part of Section~\ref{sec:numerics}, we introduce a two-level method to deal with some of the numerical discrepancies that occur in the large-time limit when computing Lyapunov exponents.

Finally, in Section~\ref{sec:proof} we provide the proof of Theorem~\ref{thm:main}. 

\section{Main results}\label{sec:main}

\subsection{Description of the model}
\label{sec:descr}


Let $(\Omega,\mathcal F,(X_t)_{t\in[0,+\infty)})$ be a continuous time progressively measurable Markov process with values in a measurable state space $E\cup\dd$, where $\dd$ is an absorbing (measurable) set such that $\dd \cap E=\emptyset$. Denoting by $\tau_\dd=\inf\{t\geq 0,\ X_t \in \dd\}$ its absorption time, it follows that 
\[
X_t\in \dd,\ \forall t\geq \tau_\dd.
\]
We denote by $\bP_x$ its law when initiated at $x \in E$ and by $\bE_x$ the corresponding expectation operator. We extend, whenever necessary, any measurable function $f:E\to[0,+\infty)$ by $f\equiv 0$ on $\dd$. We also assume that we are given two bounded functions $b:E\to\mathbb R_+$ and $\kappa:E\to\mathbb R_+$, denoting respectively, the single particle branching rate and the single particle killing rate.

We will shortly define a particle system, where particles move according to copies of $X$ between interactions. Before doing so, we make the following assumption, necessary for the system to be well defined. It  entails that two independent copies of $X$ are absorbed simultaneously with probability $0$.


\medskip\noindent\textbf{Assumption 1.} For any $x\in E$ and $t\in [0,+\infty)$, $\bP_x(\tau_\dd=t)=0$ and $\bP_x(\tau_\dd>t)>0$.

\bigskip

We also introduce the following notation. Denote by $\mathcal P_f(\mathbb N)$ the collection of finite subsets of $\mathbb N:= \{1, 2, \dots\}$. Then, for $i\in\mathbb N$, we let
\begin{align*}
&b^i:(E\cup\dd)^{\mathcal P_f(\mathbb N)}\to[0,+\infty),\\ 
&\kappa^i:(E\cup\dd)^{\mathcal P_f(\mathbb N)}\to[0,+\infty)  
\end{align*}
be a collection of bounded measurable functions.
Also for $i\in\mathbb N$, let
\begin{align*}
&p^i:(E\cup\dd)^{\mathcal P_f(\mathbb N)}\to [0,1],\\ 
&q^i:(E\cup\dd)^{\mathcal P_f(\mathbb N)}\to [0,1]
\end{align*}  
be measurable functions.  We will assume that, for any $s\in \mathcal P_f(\mathbb N)$, for any collection of points $\{x_i : i\in s, x_i \in E\}$, and any $i_0\in s$,
 \begin{align}
 \label{eq:balance}
 b^{i_0}(x_i,i\in s)-\kappa^{i_0}(x_{i},i\in s)=b(x_{i_0})-\kappa(x_{i_0})
 \end{align}
 and that
 \begin{align}
 \label{eq:constraint}
p^{i_0}(x_i,i\in s)=0\text{ whenever }| s|=1,
 \end{align}
  where $|s|$ denotes the number of elements in the set $s$. See Remarks~\ref{rem:bkappa} and~\ref{rem:pq} for a discussion of the implications of these conditions.

We now proceed with an algorithmic description of the dynamics of the particle system, which we call the binary branching model with Moran type interactions (BBMMI). The formal construction of the process is a non-trivial task, and is given in the supplementary material \cite{BBMMI-sup}.
To this end, fix $\bar{N}_0\geq 2$. We consider the particle system  $(\bar S_t,(X^i_t)_{i\in \bar S_t})_{t\in[0,+\infty)}$, where the component $\bar S_t\in\mathcal P_f(\mathbb N)$ is the set enumerating the particles in the system at time $t$, initiated with $\bar S_0=\{1,2,\ldots,\bar{N}_0\}$ and $X^i_0=x_i\in E$ for all $i\in\bar S_0$. The number of particles in the system at any time $t$ is denoted by $\bar N_t=|\bar S_t|$.

\medskip\noindent\textbf{Evolution of the BBMMI.}\label{Alg1} 
\begin{enumerate}
	\item The particles $X^i$, $i\in\bar S_0$, evolve as independent copies of $X$, and we consider  the following times: 
	\begin{align*}
	\tau^{b,i}_1:=\inf\{t\geq 0,\ \int_0^t b^i(X^i_s,\,i\in \bar S_0)\,ds\geq e_1^i\},
	\end{align*}
    and
    \begin{align*}
    \tau^{\kappa,i}_1:=\inf\{t\geq 0,\ \int_0^t \kappa^i(X^i_s,\,i\in \bar S_0)\,ds\geq E_1^i\}
    \end{align*}
	and
	\begin{align*}
	\tau^{\dd,i}_1:=\inf\{t\geq 0,\ X^{i}_t\in \dd\},
	\end{align*}
	 where $e_1^i,E_1^i$, $i = 1, \dots, \bar{N}_0$ are exponential random variables with parameter $1$, and are independent of each other and everything else.
	 \item Denoting by $i_0$ the index of the (unique) particle such that $\tau^{b,i_0}_1\wedge \tau^{\kappa,i_0}_1\wedge \tau^{\partial,i_0}_1=\tau_1$, where $\tau_1=\min_{i\in \bar S_0} \tau^{b,i}_1\wedge \tau^{\kappa,i}_1\wedge \tau^{\partial,i}_1$, we set $\bar S_t=\bar S_0$ for all $t<\tau_1$ and
	 \begin{enumerate}
        \item if $\tau_1=\tau^{b,i_0}_1$, then a \textit{branching event} occurs;
        \item if $\tau_1=\tau^{\kappa,i_0}_1$, then a \textit{soft killing event} occurs;
	 	\item if $\tau_1=\tau^{\dd,i_0}_1$, then a \textit{hard killing event} occurs.
	 \end{enumerate}
	 \item Then a resampling or selection event may occur, depending on the following situations:
	 \begin{enumerate}
	 
	 	\item[] {\bf killing:} if a (hard or soft) killing event occurred at the preceding step, then we say that $i_0$ is \textit{killed} at time $\tau_1$ and 
         \begin{itemize}
             \item  with probability $p^{i_0}(X^i_{\tau_1},\,i\in\bar S_0)$, the particle $i_0$ is removed from the system and a \textit{resampling event} occurs: one chooses $j_0$ uniformly in $S_0\setminus\{i_0\}$ and sets
             \[
             X^{\max S_0+1}_{\tau_1}=X^{j_0}_{\tau_1}\text{ and }\bar S_{\tau_1}:=\bar S_0\setminus\{i_0\}\cup\{\max \bar S_0+1\};
             \]
             observe that the number of particles in the system at time $\tau_1$ is then $\bar N_{\tau_1}=\bar N_{0}$; we say that $j_0$ is \textit{duplicated} at time $\tau_1$.
             \item with probability $1-p^{i_0}(X^i_{\tau_1},\,i\in\bar S_0)$, the particle $i_0$ is removed from the system;  the set of particles at time $\tau_1$ is enumerated by $\bar S_{\tau_1}:=\bar S_0\setminus\{i_0\}$ and the number of particles in the system  is then $\bar N_{\tau_1}=\bar N_{0}-1$;
         \end{itemize} 
	 	\item[] {\bf branching:} if a branching event occurred at the preceding step, then we say that $i_0$ has \textit{branched} at time $\tau_1$ and
         \begin{itemize}
             \item with probability $q^{i_0}(X^i_{\tau_1},\,i\in\bar S_0)$, a new particle is added to the system at position $X^{i_0}_{\tau_1}$ and a \textit{selection event} occurs: one chooses $j_0$ at random uniformly in $S_0\cup\{\max S_0+1\}$ and removes the particle  $j_0$ from the system:
             \[
             X^{\max S_0+1}_{\tau_1}=X^{i_0}_{\tau_1}\text{ and }\bar S_{\tau_1}:=\{\max \bar S_0+1\}\cup \bar S_0\setminus\{j_0\};
             \]
             observe that the number of particles at time $\tau_1$ is thus $\bar N_{\tau_1}=\bar N_0$; we say that $j_0$ is \textit{removed} at time $\tau_1$ and that $\max S_0+1$ is \textit{born} at time $\tau_1$;
             \item with probability $1-q^{i_0}(X^i_{\tau_1},\,i\in\bar S_0)$,  a new particle is added to the system at position $X^{i_0}_{\tau_1}$:
              \[
             X^{\max S_0+1}_{\tau_1}=X^{i_0}_{\tau_1}\text{ and }\bar S_{\tau_1}=\bar S_0\cup\{\max\bar S_0\},
             \]
             and we say that $\max S_0+1$ is \textit{born} at time $\tau_1$.
         \end{itemize}
	 \end{enumerate}
\end{enumerate}
After time $\tau_1$ the system evolves as independent copies of $X$ until the next killing/branching event, denoted by $\tau_2$, and at time $\tau_2$ it may undergo a resampling/selection event as above. By iteration, we define the sequence $\tau_0:=0<\tau_1<\tau_2<\cdots<\tau_n<\cdots$.

\begin{figure}[h!]
  \begin{subfigure}{0.5\textwidth}
    \includegraphics[width=8cm, height=5.5cm]{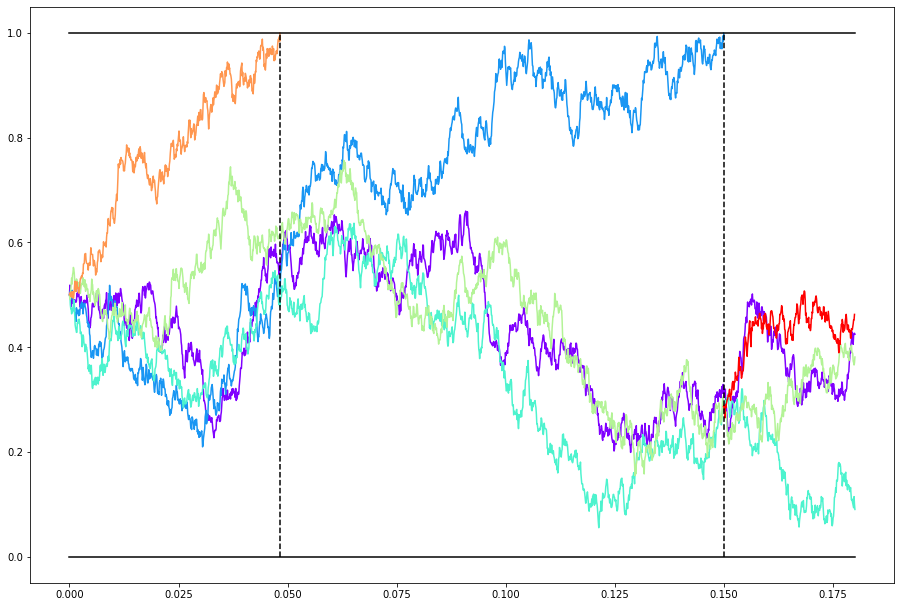} 
    \caption{Resampling. Just before time 0.05, the orange particle is killed but no resampling occurs. Around time 0.15, the blue particle is killed and the green particle is resampled to produce the red particle.\newline \newline}
    \label{fig:forcebranch}
  \end{subfigure}\hspace{0.05\textwidth}
  \begin{subfigure}{0.475\textwidth}
    \includegraphics[width=8cm, height=5.5cm]{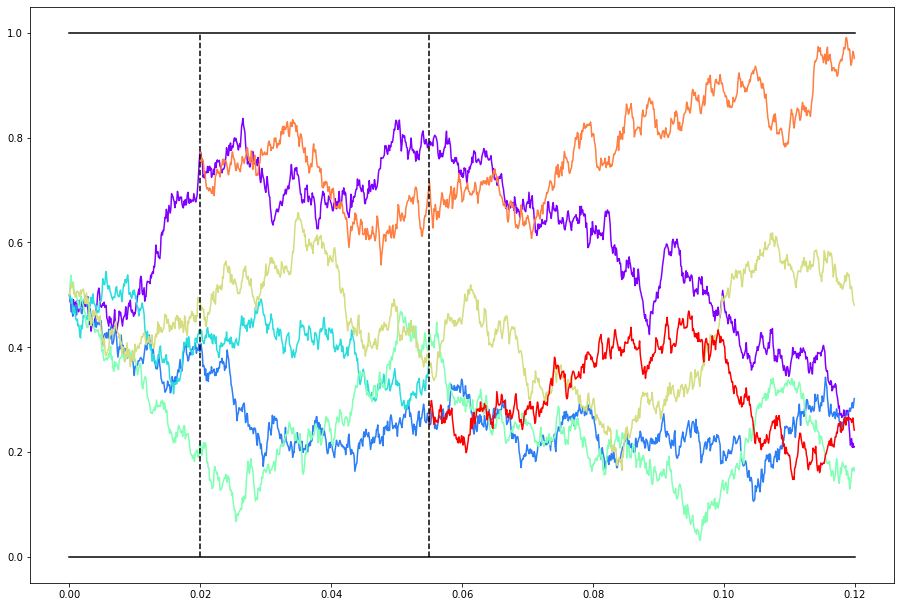}
    \caption{Selection. At time 0.02, the purple particle branches, producing the orange particle but no selection occurs. At the second dashed line, the dark blue particle branches to produce the red particle and the light blue particle is selected and removed from the system.}
    \label{fig:forcekill}
  \end{subfigure}
  \caption{Simulations of the {BBMMI} model where $X$ is a Brownian motion, killed at $0$ and $1$, with constant branching rate.}
  \label{fig:image2}
\end{figure}

We will also make use of the following assumption, which ensures that the process described above is well defined at any time $t\geq 0$.

\medskip\noindent\textbf{Assumption 2.} The sequence $(\tau_n)_{n\in\mathbb{N}}$ converges to $+\infty$ almost surely.

\smallskip

In words, the above dynamics describe an interacting particle system such that, between interactions, the individual particles evolve as copies of $X$, undergoing binary branching at rate $b^i$ (where $i$ denotes the index of the particle) and being absorbed either at rate $\kappa^i$ or when the particle hits an absorbing state. On the event that a particle has branched, with probability $p^i$, a uniformly chosen particle is removed from the system. Conversely, on the event that a particle has been absorbed, with probability $q^i$, a uniformly chosen particle is forced to branch. 

Let us now make several remarks concerning the above particle system and potential parameter choices. 

\begin{rem}
    The enumeration of the particle system  does not keep track of the genealogy of the particles in the system in order to simplify notation; this could be done by replacing $\mathcal P_f(\mathbb N)$ by $\mathcal P_f(\Omega)$, the collection of finite subsets of 
    \[
      \Omega := \bigcup_{n \ge 0} \mathbb N^n.
    \]
    A particle with label $v = v_1v_2 \dots v_n \in \Omega$ means that the particle is the $v_n$-th child of the $v_{n-1}$-th child \dots of $v_1$. In this case, the functions $b^i,\kappa^i,p^i, q^i$, should then be indexed by elements of $\Omega$ instead of $\mathbb N$.
\end{rem}

\smallskip


\begin{rem}
    \label{rem:bkappa}
    The functions $b^{i_0}$ and $\kappa^{i_0}$ are the rate at which the particle $i_0$ in the system branches and is killed respectively (in addition to hard killing events occurring when a particle hits the absorbing set $\partial$).
     We emphasise that the balance condition~\eqref{eq:balance} is crucial to ensure the relation with the semigroup $Q$ defined in the next section, and should not be considered a technical assumption.
        Extending our results to situations where $b^{i_0}$ and $\kappa^{i_0}$ do not satisfy~\eqref{eq:balance} is the subject of ongoing work.
%
\end{rem}

\smallskip

\begin{rem}
    \label{rem:pq}
    The functions $p^{i_0}$ (respectively $q^{i_0}$) denote the probability that a killing event (respectively a branching event) triggers a branching (respectively a killing) among the particles in the system. We call such an event a \textit{resampling event} (respectively a \textit{selection event}). In the context of Moran type models, they correspond to \textit{selection by death} events (respectively \textit{selection by birth} events). Condition~\eqref{eq:constraint} prevents a resampling event from occurring when the population decreases to $0$ particles. We emphasize that, since $\partial$ is a set, $p^{i_0}$ may depend on the killing location.
\end{rem}

\smallskip

\begin{rem}\label{rem-NminNmax}
    The dynamics of the IPS allow one to constrain the size of the process to remain between two bounds $N_{min}\neq 1$ and $N_{max}$, with $0\leq N_{min}\leq N_{max}$. In order to do so, one simply chooses 
\[
  p^{i_0}(x_i,\,i\in s)=\mathbf{1}_{\{|s|=N_{min}\}}\quad  \text{  and  } \quad q^{i_0}(x_i,\,i\in s)=\mathbf{1}_{\{|s|=N_{max}\}}. 
\]  
Throughout the rest of this article, we refer to the IPS with these choices for $p^i$ and $q^i$ as the $N_{min}$--$N_{max}$ process.
Other natural choices include, for instance, 
\[
  p^{i_0}(x_i,\,i\in s)=\frac{1}{\sharp s+1}\mathbf{1}_{|s|\geq 2} \quad \text{  and  } \quad q^{i_0}(x_i,\,i\in s)=1-\frac{1}{\sharp s+1},
\]
i.e. resamplings become less probable when the number of particles increases and selection becomes stronger when the number of particles increases, respectively.
Note also that, since $X$ is only assumed to be Markov, it can carry the time as a component, which would allow  $N_{min}$ and $N_{max}$ to vary with time (in the former situation) or the selection pressure to vary with time (in the later example), by simply letting $p,q$ depend on time.

Condition~\eqref{eq:balance} allows to consider models where the activity (branching/killing) of the particles increase when they are close or far from each others. For instance, if $(E,d)$ is a metric space with $b$ and $\kappa$ fixed,
\begin{align*}
b^{i_0}(x_i,i\in s)=b(x_{i_0})+\left(\sum_{i\in S} d(x_{i_0},x_i)\right)\wedge 1\text{ and }\kappa^{i_0}(x_i,i\in s)=\kappa(x_{i_0})+\left(\sum_{i\in S} d(x_{i_0},x_i)\right)\wedge 1
\end{align*}
in which case the activity of the particle $i_0$ is increased when it is far from the other particles. A similar construction replacing $\sum_{i\in S} d(x_{i_0},x_i)$ by its inverse would lead to an increased activity when $x_{i_0}$ is close the other particles.
\end{rem}

\smallskip

\begin{rem}
    In our model, we consider general killing, which corresponds to a conservative Markov process $X$ killed at a possibly unbounded rate. Finding general criteria ensuring that Assumptions 1 and 2 hold is usually involved (it is trivial when the killing rate is bounded and there is no hard killing). In Section~\ref{sec:BD} we  provide  a criterion based on a Foster-Lyapunov type assumption ensuring the non-explosion of the system when the killing rate $\kappa$ is unbounded (and when there is no hard killing) and also show in Section~\ref{sec:PDMP} that Assumption~2 holds true for a specific PDMP model, obtained as the $h$-transform of a  Neutron Random Walk killed at the boundary of domain. There is also a substantial literature on these types of problems in the presence of hard killing for the more classical fixed size Moran type systems, 
   and, in particular, the interested reader may find it useful to adapt the methods developed in~\cite{BurdzyHolystEtAl2000,GrigorescuKang2004,Villemonais2011,Villemonais2014} to our setting. 
\end{rem}

\bigskip

In the rest of this article, we denote by $\sigma_n$ and $\rho_n$ the times of the $n^{th}$ selection and resampling events, respectively.  We let $A_t$ denote the total number of resampling events up to time $t$, $B_t$ denote the total number of selection events up to time $t$ and we set $C_t = \inf\{n \ge 0 : t < \tau_{n + 1}\}$ to be the total number of events up to time $t$. We will also use the following quantities throughout, 
\begin{equation}
\Pi_t^A \coloneqq \prod_{n=1}^{A_t}\left(\frac{\bar N_{\rho_n}-1}{\bar N_{\rho_n}}\right), \qquad \Pi_t^B \coloneqq \prod_{i=1}^{B_t}\left(\frac{\bar N_{\sigma_n}+1}{\bar N_{\sigma_n}}\right).\label{Pi}
\end{equation}

\subsection{$L^2$ bounds for the empirical distribution of the process}
\label{sec:mainresult}

 For all bounded measurable functions $f:E\to\mathbb{R}$, all $t\geq 0$ and all $x\in E$, we set
\begin{align*}
	Q_t f(x)&=\bE_x\left[f(X_{t})\exp\left(\int_0^{t} (b(X_{s})-\kappa(X_s))\,\D s\right)\mathbf{1}_{t<\tau_\partial}\right],
\end{align*}
where we recall that $\tau_\partial = \inf\{t \ge 0 : X_t \in \partial\}$. This defines a Feynman-Kac semigroup $(Q_t)_{t\geq 0}$, which is related to the binary branching model where particles move as copies of $X$ that are killed at rate $\kappa$ and branch at rate $b$ resulting in the creation of two independent copies of the original particle. The relation between $Q$ and this process is given by the well known many-to-one formula (see for instance~\cite{HarrisRobertsEtAl2017} and references therein, see also Remark~\ref{rem:nmin0nmaxinfty}). In this section, we consider the BBMMI defined above and study its relation with $Q$.

In what follows, 
we let $m_t$ (resp $\hat{m}_t$) denote the empirical measure (resp. normalised empirical measure) of the particle system,
\begin{equation}
m_t = \sum_{i \in\bar S_t} \delta_{X_t^{i}}, \quad \text{and} \quad \hat{m}_t = \frac{1}{\bar{N}_0} m_t, \qquad t \ge 0.
\label{occmeas}
\end{equation}
The following result first provides an equality between a weighted version of the empirical measure of the system at any time $T$ and the semigroup $Q$ at time $T$. The second part is dedicated to the control of the $L^2$ distance between renormalised versions of $Q_T$ and $m_T$.

\medskip

\begin{thm} \label{thm:main}
Under Assumptions~1 and~2, the BBMMI satisfies, for all time $T\geq 0$ and all bounded measurable functions $f:E\to\mathbb{R}$, the following many-to-one formula
\begin{equation}
m_0 Q_T f=\E\left(\Pi^A_T\,\Pi^B_T m_T f\right).
\label{M21}
\end{equation}
Moreover, 
\begin{equation}
\left\|\frac{m_0Q_T f}{m_0 Q_T\mathbf{1}_E}- \frac{m_T(f)}{m_T(\mathbf{1}_E)}\mathbf 1_{\bar N_T\neq 0}\right\|_2\leq C\,\exp(c\|b\|_\infty T) \frac{\|f\|_\infty}{m_0 Q_T\mathbf 1_E/{\bar N_0}}\,\frac{1}{\sqrt{\bar N_0}},
\label{L2bound}
\end{equation}	
where $C,c$ are positive constants.
\end{thm}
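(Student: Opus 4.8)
The plan is to derive both assertions from the single martingale
$$M_t := \Pi^A_t\,\Pi^B_t\, m_t(Q_{T-t}f), \qquad 0\le t\le T,$$
obtained by testing the occupation measure against the time-reversed semigroup. Writing $\Phi_t=\Pi^A_t\Pi^B_t$ and $g_t=Q_{T-t}f$, the function $g_t$ solves the backward equation $\partial_t g_t=-(\mathcal L+b-\kappa)g_t$, where $\mathcal L$ is the generator of $X$ (with absorption at $\partial$ built in, $g_t\equiv0$ on $\partial$). First I would compute the infinitesimal drift of $M_t$ from four contributions: the time-derivative of $g_t$ together with the motion of the particles, which after the $\mathcal L$-terms cancel leaves $-\Phi_t\sum_{i}(b-\kappa)(X^i_t)g_t(X^i_t)$; the branching events; and the soft and hard killing events. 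The crucial bookkeeping is that at each selection (resp.\ resampling) event the weight factor $\frac{\bar N+1}{\bar N}$ (resp.\ $\frac{\bar N-1}{\bar N}$) in $\Phi$ exactly compensates the mean rescaling $\frac{\bar N}{\bar N+1}$ (resp.\ $\frac{\bar N}{\bar N-1}$) produced by the uniform removal/duplication, so that a branching (resp.\ killing) of particle $i$ contributes on average $+\Phi_t g_t(X^i_t)$ (resp.\ $-\Phi_t g_t(X^i_t)$), whether or not an interaction is triggered, while hard killing contributes nothing because $g_t$ vanishes on $\partial$. Summing, the drift equals $\Phi_t\sum_i\big(b^i-\kappa^i-(b-\kappa)\big)(X^i_t)g_t(X^i_t)$, which vanishes by the balance condition \eqref{eq:balance}. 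Thus $M_t$ is a martingale; Assumption~2 and the boundedness of $b,\kappa,f$ provide the integrability making it a true martingale, and $\E[M_T]=M_0$ is exactly \eqref{M21}.

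For the $L^2$ bound I would first centre $f$. Set $\eta_T=m_0Q_Tf/(m_0Q_T\mathbf{1}_E)$ and $\tilde f=f-\eta_T\mathbf{1}_E$, so that $m_0Q_T\tilde f=0$, $\|\tilde f\|_\infty\le2\|f\|_\infty$, and on $\{\bar N_T\ge1\}$ the quantity to bound equals $-m_T(\tilde f)/\bar N_T=-M_T/M^0_T$, where now $M_t=\Phi_tm_t(Q_{T-t}\tilde f)$ is a mean-zero martingale ($M_0=0$) and $M^0_t=\Phi_tm_t(Q_{T-t}\mathbf{1}_E)$ is the total-mass martingale with $M^0_0=m_0Q_T\mathbf{1}_E=:\gamma$. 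Two elementary facts then deliver the estimate from the single bound $\E[\langle M\rangle_T]\le C\bar N_0 e^{c\|b\|_\infty T}\|\tilde f\|_\infty^2$ (and its analogue for $M^0$). On $\{M^0_T\ge\gamma/2\}$ one has $|M_T/M^0_T|\le 2|M_T|/\gamma$, contributing at most $4\E[M_T^2]/\gamma^2$; on $\{0<M^0_T<\gamma/2\}$ one uses the deterministic bound $|m_T(\tilde f)/\bar N_T|\le\|\tilde f\|_\infty$ together with $\PP(M^0_T<\gamma/2)\le 4\operatorname{Var}(M^0_T)/\gamma^2$; and on the extinction event $\{\bar N_T=0\}$, where the left side of \eqref{L2bound} equals $\eta_T$, one uses $\eta_T^2\PP(\bar N_T=0)\le\|f\|_\infty^2\operatorname{Var}(M^0_T)/\gamma^2$ since $M^0_T=0$ there. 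Each term is $\lesssim e^{c\|b\|_\infty T}\|f\|_\infty^2\,\bar N_0/\gamma^2$, and recalling $\gamma=m_0Q_T\mathbf{1}_E$ this is exactly the square of the claimed right-hand side.

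Everything therefore reduces to the quadratic-variation estimate $\E[\langle M\rangle_T]\lesssim\bar N_0 e^{c\|b\|_\infty T}\|\phi\|_\infty^2$ for $M_t=\Phi_tm_t(Q_{T-t}\phi)$, and this is where the real work lies. Using $\|g_t\|_\infty=\|Q_{T-t}\phi\|_\infty\le e^{\|b\|_\infty T}\|\phi\|_\infty$ (the source of the exponential factor), I would split $\langle M\rangle_T$ into its jump and continuous parts. Each branching/killing of a particle at $x$ produces a jump of $M$ of size $O(\Phi_t\|g_t\|_\infty)$ — for selection/resampling this uses $|m_t(g_t)|/\bar N_t\le\|g_t\|_\infty$ and $\tfrac{\bar N\pm1}{\bar N}\le2$ to absorb the reweighting — so the jump compensator is bounded by $C(\|b\|_\infty+\|\kappa\|_\infty)\int_0^T\E[\bar N_s\Phi_s^2]\,\|g_s\|_\infty^2\,ds$. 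For the continuous part $\int_0^T\Phi_s^2\sum_i\Gamma(g_s)(X^i_s)\,ds$, with carré-du-champ $\Gamma(g)=\mathcal L(g^2)-2g\mathcal Lg$, I would avoid any regularity hypothesis on $f$ by rewriting, via the backward equation, $\Gamma(g_s)=(\partial_s+\mathcal L)(g_s^2)+2(b-\kappa)g_s^2$, so that along each particle trajectory the first two terms telescope into boundary contributions bounded by $\|g\|_\infty^2$, leaving only the bounded potential term. The genuine obstacle is that both parts carry the \emph{squared} weight $\Phi_s^2$ rather than $\Phi_s$, so \eqref{M21} does not apply directly; the key lemma I expect to be the crux is a second-moment (many-to-two type) bound $\E[\Phi_s^2\,\bar N_s]\le C\bar N_0 e^{c\|b\|_\infty s}$, which I would obtain by computing the drift of $\Phi_t^2 m_t(\psi_t)$ and checking that squaring the interaction factors $\tfrac{\bar N\pm1}{\bar N}$ generates only corrections of order $\bar N_t^{-1}$ relative to the free-evolution term, yielding a closed Grönwall inequality. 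Controlling this second moment of the interaction weights uniformly in the fluctuating population size is the main difficulty; once it is in hand, the quadratic-variation bound and hence \eqref{L2bound} follow.
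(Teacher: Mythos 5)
Your core object is exactly the paper's: the process $\Pi^A_t\Pi^B_t\,m_t(Q_{T-t}f)$ is precisely the $\nu^f_t$ whose martingale property the paper establishes (by bare-hands conditional-expectation computations using only the Markov property, rather than a generator/drift calculation, since no generator is assumed to exist), and taking expectations at $t=T$ gives \eqref{M21}. Your centring-plus-Chebyshev assembly of \eqref{L2bound} is a harmless variant of the paper's triangle-inequality argument in its final step, and the second-moment bound you single out as the crux, $\E\left[\Phi_s^2\,\bar N_s\right]\lesssim \bar N_0\,e^{c\|b\|_\infty s}$, is essentially the content of Lemma~\ref{lem:usefulbound}. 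So the skeleton is the right one.

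The genuine gap is in how you propose to prove the quadratic-variation estimate, on both of its parts. For the jump part, you bound the compensator by $C(\|b\|_\infty+\|\kappa\|_\infty)\int_0^T\E\left[\bar N_s\Phi_s^2\right]\|g_s\|_\infty^2\,\D s$. This fails in the stated generality: killing includes \emph{hard} killing at the absorbing set $\partial$, whose hitting time admits no hazard-rate bound (Assumption~1 only gives $\bP_x(\tau_\partial=t)=0$), so the jumps of $M$ produced at absorption times have no rate-based compensator control at all; and even with purely soft killing your constants would depend on $\|\kappa\|_\infty$, whereas in \eqref{L2bound} the constants involve only $\|b\|_\infty$ --- a uniformity in the killing mechanism that the paper's applications rely on (unbounded $\kappa$ in Section~\ref{sec:BD}, boundary absorption in Section~\ref{sec:PDMP}). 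The paper's fix is combinatorial rather than analytic: since the population size is nonnegative, the number of killing events up to time $T$ is pathwise at most $\bar N_0+\beta_T$, where $\beta_T$ is the number of branching events, so the expected sum of squared killing jumps reduces to $\E\left[(\Pi^B_T)^2(\bar N_0+\beta_T)\right]$, which Lemma~\ref{lem:usefulbound} controls using $\|b\|_\infty$ alone. For the continuous part, your carr\'e-du-champ computation $\Gamma(g)=\mathcal L(g^2)-2g\mathcal Lg$ and the pointwise backward equation presuppose structure (a generator, semimartingale decompositions of $g_t(X^i_t)$) that the theorem does not assume --- $X$ is only a progressively measurable Markov process. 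The paper sidesteps this entirely by decomposing the between-event evolution into martingale increments indexed by pairs (particle, inter-event interval): these increments are mutually orthogonal (disjoint time intervals, or conditional independence of distinct particles between events), each is almost surely bounded by $3\sup_{t\in[0,T]}\|Q_tf\|_\infty$, and their total number is controlled by the same pathwise count $\bar N_0+\beta_T$, so no regularity of $X$ or of $f$ beyond measurability is ever used.
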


\medskip

The proof of this result is given in Section~\ref{sec:proof}, where a variance estimate for the unbiased particle measure $\Pi_T^A\Pi_T^Bm_T f$ is obtained (see Step~4 therein). Let us first make some further comments on the model and the above result.

\begin{rem}
  In Theorem~\ref{thm:main}, $m_0$ is assumed deterministic with total mass $\bar N_0$. The result immediately extends to the case where $m_0$ is random by taking the expectation in~\eqref{M21} and~\eqref{L2bound} (in the latter, the right hand term may be infinite). Moreover, the expressions may also be written in terms of the normalised measure, $\hat m$, in which case \eqref{M21} becomes $\hat m_0 Q_T f=\E\left(\Pi^A_T\,\Pi^B_T \hat m_T f\right)$ and \eqref{L2bound} can be written as
\begin{equation*}
\left\|\frac{\hat m_0Q_T f}{\hat m_0 Q_T\mathbf{1}_E}- \frac{\hat m_T(f)}{\hat m_T(\mathbf{1}_E)}\mathbf 1_{\bar N_T\neq 0}\right\|_2\leq C\,\exp(c\|b\|_\infty T) \frac{\|f\|_\infty}{\hat m_0 Q_T\mathbf 1_E}\,\frac{1}{\sqrt{\bar N_0}},
\end{equation*}
making explicit the `classical' dependence ${\bar N_0}^{-1/2}$ on the number of particles which appears on the right-hand side.
\end{rem}

\smallskip

 \begin{rem}\label{rem:nmin0nmaxinfty}
     When $p=q\equiv 0$,  the particle system has the dynamics of the classical binary branching process where, from their point of creation particles move as independent copies of $X$ and, when at $x \in E$, particles are either killed at rate $\kappa(x)$ or they branch at rate $b(x)$ at which point the parent particle is replaced by two daughter particles. 
 In this situation, we have $\Pi^A_T=\Pi^B_T=1$ almost surely, and we thus recover the classical many-to-one formula:
 \begin{equation}\label{eq:M21}
   \psi_t[g](x) = \mathbf{E}_x\left[{\rm e}^{\int_0^t b(X_s) - \kappa(X_s)\mathrm d s}g(X_t)\mathbf{1}_{t < \tau_\partial} \right],
 \end{equation}
 where $\psi_t[g]$ is the linear semigroup of the binary branching process.
 
      It is also possible to treat branching processes with non-local branching. In this case, when a branching event occurs (at rate $b(x))$, the parent is removed from the system and replaced by two particles at (random) positions $x_1, x_2 \in E$, which may be different to $x$. Denoting by $(\psi_t)_{t \ge 0}$ the linear semigroup of this process, in this case, the many-to-one formula is given by
  \[
    \psi_t[g](x) = \mathbf{E}_x\left[{\rm e}^{\int_0^t b(Y_s) - \kappa(Y_s)\mathrm d s}g(Y_t)\mathbf{1}_{t < \tau_\partial} \right],
  \] 
  where $(Y_t)_{t \ge 0}$ behaves like $(X_{t})_{t \ge 0}$, except that at rate $2b(Y_t)$, the process $Y_t$ jumps to a new location given by $(\delta_{x_1} + \delta_{x_2})/2$. We refer the reader to section~\ref{sec:PDMP} for an example of a branching process with non-local branching. 
\end{rem}
  
\smallskip

\begin{rem}
Although we have remained in the setting of binary branching in this article, it should be possible to extend the definition of the BBMMI to allow for more general offspring distributions when a (natural) branching event occurs by modifying the selection mechanism. We intend to address this in future work.
\end{rem}

\smallskip

\begin{rem}
    The definition of our model allows a population with varying size by including branching, death and Moran type interactions, which are key features in the context of population dynamics. However, when restricted to the situation where $p=q\equiv 1$ (so that the size of the process remains constant equal to $\bar N_0$) we recover the standard Moran particle model (see~\cite{DelMoralMiclo2002, del2000branching,Rousset2006,CloezCorujo2021} for similar results), where the process is constrained to remain of constant size $\bar N_0$ and with uniformly bounded killing rate. 
    Our main contribution to this constant size setting is that  our assumptions allow for hard killing events (a feature observed in several models, such as diffusion processes killed at the boundary of a domain), that we consider general Markov processes (allowing implicit dependence with respect to time and with respect to the past of the process), and do not require continuity of the branching/killing rates with respect to the empirical measure of the process.
\end{rem}

\begin{rem}
    \label{rem:uniformconv}
    In the fixed size setting with bounded killing rate, it is well known that strong mixing conditions on the semi-group $Q$ are sufficient to entail uniform convergence with respect to time (see e.g. \cite{DelMoralMiclo2002,DelMoralMiclo2003,DelMoral2004,Rousset2006}, and more recently~\cite{DelVillemonais2018,OcafrainVillemonais2017,ToughNolen2020,ArnaudonMoral2020}). The same proof as in~\cite{del2000branching}, but using the convergence result of Theorem~\ref{thm:main}, also applies.
    More precisely, if one assumes that the killing rate is bounded (with no hard killing) and that there exists a probability measure $\nu_Q$ on $E$ and some constants $C_Q,\gamma_Q>0$ such that
    \begin{align}
    \label{eq:strong-mixing}
    \left\|\frac{\mu Q_t}{\mu Q_t\mathbf 1_E}-\nu\right\|_{TV}\leq C_Q e^{-\gamma_Q t},
    \end{align}
    for all $t\geq 0$ and all probability measures $\mu$ on $E$, one deduces that, for all initial empirical measure  $m_0$ of the particle system and all bounded measurable function $f:E\to\mathbb R_+$,
    \begin{align*}
    \sup_{T\in [0,+\infty)}\left\| \frac{m_T(f)}{m_T(\mathbf{1}_E)}-\frac{m_0 Q_T}{m_0 Q_T\mathbf 1_E}\right\|_2
    \leq (2C_Q+C)N_{min}^{-\alpha/2}\,\|f\|_\infty,
    \end{align*} 
    where 
    ${\displaystyle \alpha=\gamma_Q/((c+1)\|b\|_\infty+\|\kappa\|_\infty+\gamma_Q)\in(0,1)}$  and $C,c$ are from Theorem~\ref{thm:main}. In particular,
    \[
    \left\|\frac{m_T(f)}{m_T(\mathbf{1}_E)}-\nu_X(f)\right\|_2\leq  \left[(2C_Q+C)N_{min}^{-\alpha/2}+C_Q\,e^{-\gamma_Q T}\right]\,\|f\|_\infty.
    \]    
     In Section~\ref{sec:BD}, we show that such a time-uniform convergence can be proved when $\kappa$ is not bounded, when $C_Q$ depends on the initial distribution $\mu$, and when $f$ is not necessarily bounded.
\end{rem}

\smallskip

\begin{rem}
  The main property of the process $(\bar S_t, (X^i_t)_{t\in\bar S_t})_{t\geq 0}$ that is used in the proof of Theorem~\ref{thm:main} is the identity
  \begin{align}
    \label{eq:NewMarkov}
    \E^t\left[Q_{T-(t+s)}f(X^1_{t+s})\mathbf{1}_{t+s<\tau_1}\right]
      &=\texttt E^t\left[Q_{T-(t+s)}f(Y^1_{t+s})e^{-\int_t^{t+s} h_u\,\D u}\bI_{t+s<\tau_\dd}\right]\mathbf{1}_{t<\tau_1},
  \end{align}
  where $\texttt E^t$ is the expectation on a probability space $\Omega'$ such that $(Y^i_u)_{u\geq t}$, $i\in\bar S_0$, are independent copies of $X$, starting from $X^i_t$ at time $t$, and where  
\[
h_u:=\sum_{j\in\bar S_0} b^j((Y^i_u)_{i\in\bar S_0})+\kappa^j((Y^i_u)_{i\in\bar S_0}).
\]
  This identity is derived from the Markov property, and the assumption on the dynamics of the process $(\bar S_t, (X^i_t)_{t\in\bar S_t})_{t\geq 0}$. As such, it is possible to see that Theorem~\ref{thm:main} still holds under milder conditions on the construction. Specifically, suppose that there exist a filtered probability space $(\Omega, \mathcal{F}, (\mathcal{F}_t)_{t \ge 0}, \mathbb{P})$ and a progressively measurable process $(\bar S_t, (X^i_t)_{t\in\bar S_t})_{t\geq 0}$ defined on this space, such that the rates $b^i, \kappa^i, p^i,q^i$ are no longer functions, but merely progressively measurable processes, satisfying the equivalents\footnote{Technically, we need that the integrals of the respective processes are equal up to modification, that is, if we define $I^{i,1}_t:= \int_0^t (b_s^i-\kappa_s^i) \, \D s$ and $I^{i,2}_t:= \int_0^t (b^i(X_s^i)-\kappa(X_s^i)) \, \D s$, then we require the processes $I^{i,1}$ and $I^{i,2}$ to agree up to a modification. Note that as a consequence of the difference $b(\cdot)-\kappa(\cdot)$ being bounded, this implies that the processes are also indistinguishable.}
  of \eqref{eq:balance}, \eqref{eq:constraint}:
  \begin{align*}
    b^i_t-\kappa^i_t & = b(X_t^{i})-\kappa(X_t^i)    \\
    p^i_t & = 0 \quad \text{ whenever } |\bar S_t| = 1.
  \end{align*}
  Then the proof of Theorem~\ref{thm:main} still holds, provided that condition \eqref{eq:NewMarkov} can be verified in the more general case, and where we interpret $\E^t$ as the expectation conditional on the full sigma-algebra $\mathcal{F}_t$. Of course, such a property is something that may, in certain settings, be obtained through construction.

  An example where such a construction might be natural is in a distributed computing environment. In this case, one may look to break the particles into subsets which are handled by separate processors with minimal communication. In this case, one may choose to impose $N_{min}-N_{max}$-like criterion on each separate processor. Roughly, each processor might be allocated an initial set of particles, which evolve without communication (by only having killing/branching events), so long as the number of particles remains within $[N_{min},N_{max}]$. Since global properties of the particle system are only needed when resampling or selection events occur, the need for communication events in the system, which cause computational bottlenecks, could be reduced by choosing to allow killing or branching events provided the number of particles on a given processor does not move outside a specified range. In such a case, the number of particles on a given processor would depend on the allocation of particles to given processors (and which may itself be randomised independently of the history of the particle system). In practice, communication between servers will still be necessary, but potentially less frequently, and hence with lower computational burden, than with other methods. In these examples, the processes $b^i$ and $\kappa^i$ would not be simply functions of the particle positions (since we also need to know which particles are on the same processor), but would be expected to be $(\mathcal{F}_t)_{t \ge 0}$-progressively measurable, for an appropriate filtration, which may then be larger than the filtration generated by the particle histories.

\end{rem}

\section{Applications}
\label{sec:apps}

\subsection{Population size dynamics with constrained population size}\label{sec:BD}

In this section, we introduce a  branching model for a population where the size is constrained between $N_{min}\geq 2$ and $N_{max}\geq N_{min}$ and demonstrate that the results from the previous section may be applied. In this model, we associate with each individual a health state $x\in E:= \mathbb Z_+^d$, $d\geq 1$, which evolves according to the dynamics of a multi-dimensional birth and death process during the life of an individual. The parameters of the pure jump Markovian dynamics are denoted by
\[
\begin{cases}
\beta_i(x)&=\text{ transition rate from $x$ to $x+e_i$},\\
\delta_i(x)&=\text{ transition rate from $x$ to $x-e_i$},
\end{cases}
\]
where $e_i \in \mathbb Z_+^d$ denotes the vector with $1$ in the $i^{th}$ position and $0$ everywhere else, $\beta_i, \delta_i:\mathbb Z_+^d\to[0,+\infty)$ for all $i\in\{1,\ldots,d\}$, and $\delta_i(x)=0$ for all $x=(x_1,\ldots,x_d)$ such that $x_i=0$ (so that the process cannot leave the set $\mathbb Z_+^d$).

When its health state is $x\in\mathbb Z_+^d$, at rate $\kappa(x)$ an individual dies, where $\kappa:\mathbb Z_+^d\to [0,+\infty)$, and at rate $b(x)$, it gives birth to an identical individual with equal health state $x$, where $b:\mathbb Z_+^d\to [0,+\infty)$ is bounded. Note that, in our model, a higher health state may  thus be associated with a worse health condition. This corresponds, for instance, to the situation where $x$ is the size of a (harmful) parasite population or the concentration of a virus.

In addition, an external mechanism controls the total size of the individuals population: the total number of individuals is bounded above by $N_{max}$ by removing an individual chosen randomly and uniformly in the population when the population size reaches $N_{max}+1$, and the total number of individuals is lower bounded by $N_{min}$ by cloning an individual chosen randomly and uniformly when the population size reaches $N_{min}-1$ (in the situation where $N_{min}=0$, this last mechanism does not operate).

Formally, the state space of our population model is $F:=\bigcup_{N=N_{min}}^{N_{max}} [E]^N$, where $[E]^N$ denotes the unordered $N$-tuples of $E$. We represent the elements of $F$ by $\bx=[\bx_1,\ldots,\bx_N]$ and note that repetitions are allowed. The extended infinitesimal generator of the process acting on bounded mesurable functions $f:F\to\mathbb R$, is given, for any $\bx=[\bx_1,\ldots,\bx_N]\in F$, by 
\begin{align*}
\bar L f(\bx)&=\sum_{k=1}^N \sum_{i=1}^d \left(\beta_i(\bx_{k,i})\Delta_{\bx,\bx+e^N_{k,i}}f+\delta_i(\bx_{k,i})\Delta_{\bx,\bx-e^N_{k,i}}f\right) \\
&\qquad\qquad +\sum_{k=1}^N b(\bx_k)\Delta_{\bx,\bx\cup[\bx_k]}f+\sum_{k=1}^N\kappa(\bx_k)\Delta_{\bx,\bx\setminus[\bx_k]}f(\bx)\\
&\qquad\qquad+\mathbf{1}_{N=N_{min}}\sum_{k=1}^N \kappa(\bx_k)\frac{1}{N-1}\sum_{\ell=1,\ell\neq k}^N \Delta_{\bx\setminus[\bx_k],\bx\cup[\bx_\ell]\setminus[\bx_k]}f\\
&\qquad\qquad+\mathbf{1}_{N=N_{max}}\sum_{k=1}^N b(\bx_k)\frac{1}{N+1}\sum_{\ell=1}^N (1+\mathbf{1}_{\ell=k})\Delta_{\bx\cup[\bx_k],\bx\cup[\bx_k]\setminus[\bx_\ell]}f,
\end{align*}
where 
\begin{itemize}
    \item $\bx_{k,i}$ is the $i^{\text{th}}$ component of $\bx_k$,
    \item  $\bx+e^N_{k,i}$ is $\bx$ but with $\bx_{k,i}$ replaced by $\bx_{k,i}+1$,
    \item $\bx-e^N_{k,i}$ is $\bx$ but with $\bx_{k,i}$ replaced by $\bx_{k,i}-1$,
    \item for all $\mathbf x,\mathbf y$, $\Delta_{\bx,\mathbf y}f:=f(\mathbf y)\mathbf 1_{\mathbf y\in F}-f(\bx)\mathbf 1_{\mathbf x\in F}$,
    \item for all $y\in E$, $\bx\cup[y]=[\bx_1,\ldots,\bx_N,y]$,
    \item $\bx\setminus[\bx_k]=[\bx_1,\ldots,\bx_{k-1},\bx_{k+1},\ldots,\bx_N]$.
\end{itemize} 
In a similar manner to the previous section, we denote by $A_T$ the number of times an individual is duplicated by the size constraining mechanism before time $T$, and by $B_T$ the number of times an individual is removed from the population by the size constraining mechanism. We also set $(\bX(t))_{t\geq 0}$ to be the population process, $(|\bX(t)|)_{t\geq 0}$ its size, and $(X_t)_{t\in\geq 0}$ a multi-dimensional birth and death process with (state dependent) transition rates $\beta$ and $\delta$. The notation $\mathbb P_\bx$ (respectively $\mathbf P_x$) is used for the law of the population process $\bX$ starting from $\bx\in F$ (respectively for the law of $X$ starting from $x\in E$). We denote as usual by $\mathbb E_x$ and $\mathbf E_x$ the associated expectations.

The first difficulty is to ensure that the constrained branching model defined above does not explode in finite time, where explosion is interpreted in the sense that infinitely many jumps of the system happen in finite time. Indeed, since $\kappa$ is not assumed to be bounded, the rate at which new individuals are added to the system is not bounded. 
The following result provides a sufficient criterion for the non-explosion of the process but before stating it, let us first introduce some definitions that will be used in what follows. 

We say that two functions $V$ and $\kappa$ are \textit{co-monotone} if $(\kappa(x)-\kappa(y))(V(x)-V(y))\geq 0$ for all $x,y\in E$. We say that $V$ and $\kappa$ are \textit{almost co-monotone} if there exists a function $\kappa'$ co-monotone with $V$ such that $\kappa-\kappa'$ is bounded. We say that $V$ \textit{goes to infinity at infinity} if the set of $x \in E$ such that $V(x) < c$ is finite for all $c >0$.

\begin{prop}
    \label{prop:nonexplPopModel}
    Assume that there exists a  function $V: E\to[1,+\infty)$ almost co-monotone with $\kappa$, such that $V$ tends to infinity at infinity, and such that, for all $x\in E$,
    \begin{align}
    \label{eq:lyapa}
    \sum_{i=1}^d \left[\beta_i(x) (V(x+e_i)-V(x))+\delta_i(x)(V(x-e_i)-V(x))\right] \leq CV(x),
    \end{align}
    for some constant $C\in\mathbb{R}$. Then the process with transition rates given by $\bar L$ is non-explosive.
\end{prop}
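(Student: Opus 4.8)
The plan is to build a Lyapunov function on the population state space $F$ out of the single-particle function $V$, namely
\[
W(\bx)=\sum_{k=1}^N V(\bx_k),\qquad \bx=[\bx_1,\ldots,\bx_N]\in F,
\]
and to prove the drift inequality $\bar L W\le C' W$ for a suitable constant $C'$, after which a standard Foster--Lyapunov argument gives non-explosion. First I would record that $W$ is norm-like: since $V\ge 1$ we have $W(\bx)\ge N\ge N_{min}\ge 1$, and because $V$ tends to infinity at infinity, any sublevel set $\{W\le c\}$ is \emph{finite} (it forces $N\le c$ and confines each $\bx_k$ to a finite subset of $E$). On a finite set all the jump rates $\beta_i,\delta_i,b,\kappa$ are bounded, so the process started there makes only finitely many jumps in finite time; consequently, if $(T_n)_{n\ge 1}$ denote the successive jump times and $T_\infty=\lim_n T_n$ the explosion time, an explosion necessarily forces $W(\bX_{T_n})\to\infty$.

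Next I would evaluate $\bar L W$ term by term, using that $W$ changes by $V(\bx_k+e_i)-V(\bx_k)$, by $\pm V(\bx_k)$, etc., under the various transitions. The individual birth/death dynamics contribute exactly
\[
\sum_{k=1}^N\sum_{i=1}^d\left[\beta_i(\bx_k)\big(V(\bx_k+e_i)-V(\bx_k)\big)+\delta_i(\bx_k)\big(V(\bx_k-e_i)-V(\bx_k)\big)\right]\le C\,W(\bx)
\]
by the single-particle drift assumption~\eqref{eq:lyapa}. The branching terms add a copy of $\bx_k$ and, when $N=N_{max}$, delete a uniformly chosen particle; since deletion only lowers $W$, a short computation (the selection correction at $N=N_{max}$ equals $\sum_k b(\bx_k)\tfrac{1}{N+1}(NV(\bx_k)-W(\bx))$) bounds their total contribution by $\|b\|_\infty W(\bx)$. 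The natural killing term, active with $\bx\setminus[\bx_k]\in F$ when $N>N_{min}$, contributes $-\sum_k\kappa(\bx_k)V(\bx_k)\le 0$.

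The crux is the killing-plus-resampling contribution at $N=N_{min}$, which is driven by the possibly unbounded rate $\kappa$. Combining the natural killing (whose target now leaves $F$) with the resampling mechanism, the $W(\bx)\sum_k\kappa(\bx_k)$ terms cancel and the net contribution to $\bar L W$ equals
\[
\frac{1}{N-1}\sum_{k=1}^N\sum_{m\neq k}\kappa(\bx_k)V(\bx_m)-\sum_{k=1}^N\kappa(\bx_k)V(\bx_k).
\]
Here co-monotonicity enters decisively: summing $(\kappa(\bx_k)-\kappa(\bx_m))(V(\bx_k)-V(\bx_m))\ge 0$ over all pairs $(k,m)$ gives $\sum_{k,m}\kappa(\bx_k)V(\bx_m)\le N\sum_k\kappa(\bx_k)V(\bx_k)$, equivalently $\sum_{m\neq k}\kappa(\bx_k)V(\bx_m)\le(N-1)\sum_k\kappa(\bx_k)V(\bx_k)$, so the displayed expression is $\le 0$. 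In the almost co-monotone case I would split $\kappa=\kappa'+r$ with $\kappa'$ co-monotone with $V$ and $\|r\|_\infty\le M$, apply the bound to $\kappa'$, and control the two residual $r$-contributions, each by $M\,W(\bx)$, for an overall bound $\le 2M\,W(\bx)$. Collecting all four pieces yields $\bar L W\le C' W$ with $C'=C+\|b\|_\infty+2M$.

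Finally, with $\bar L W\le C'W$ and $W\ge 1$, the stopped process $e^{-C'(t\wedge T_n)}W(\bX_{t\wedge T_n})$ is a non-negative supermartingale (via Dynkin's formula, legitimate since only finitely many jumps occur before $T_n$), so $\E\!\left[e^{-C'(t\wedge T_n)}W(\bX_{t\wedge T_n})\right]\le W(\bX_0)<\infty$. Since explosion forces $W(\bX_{T_n})\to\infty$ while $e^{-C'T_\infty}>0$ on $\{T_\infty<\infty\}$, Fatou's lemma forces $\PP(T_\infty<\infty)=0$, which is the claimed non-explosion. The main obstacle is exactly the resampling term: without the (almost) co-monotonicity hypothesis, cloning a low-$V$ particle triggered by the killing of a high-$\kappa$ particle could make $W$ grow faster than linearly in $W$ and break the supermartingale control; co-monotonicity is precisely what ties the magnitude of the killing rate to the Lyapunov mass simultaneously being removed from the system.
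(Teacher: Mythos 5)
Your proposal is correct and follows essentially the same route as the paper: the same Lyapunov function $\bar V(\bx)=\sum_{k} V(\bx_k)$, the same term-by-term generator computation with the crucial cancellation of the $\kappa$-terms at $N=N_{min}$, and the same use of (almost) co-monotonicity — your pairwise expansion of $(\kappa(\bx_k)-\kappa(\bx_m))(V(\bx_k)-V(\bx_m))\ge 0$ is exactly the Chebyshev/FKG inequality the paper invokes, and your splitting $\kappa=\kappa'+r$ matches the paper's $\kappa=\kappa'+\kappa''$. The only differences are cosmetic: the paper localizes with the exit times $\theta_n$ from the sets where all particle coordinates are at most $n$ and applies Dynkin plus Gr\"onwall, whereas you stop at the jump times $T_n$ and use a discounted supermartingale with Fatou (relying on the standard fact that explosion forces $W(\bX_{T_n})\to\infty$); both are routine finishes of the same Foster--Lyapunov argument.
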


\begin{rem}
    The proof of this non-explosion result makes use of the extended infinitesimal generator of the  particle system and of Dynkin's formula. As a consequence, its proof can be extended without too much effort to more general continuous time Markov processes, by replacing the left hand term in~\eqref{eq:lyapa} by the extended infinitesimal generator of the more general process. One difficulty can appear though when the process is not a simple pure jump process on a discrete state space:  to prove  in a general context that $\bar L$ (adapted to fit the general individual dynamics of the particles) is indeed the extended infinitesimal generator of the particle system can be challenging.
\end{rem}

\begin{rem}
    More complex resampling/selection mechanisms could be considered with only slight modifications of the proof of Proposition~\ref{prop:nonexplPopModel}. For instance, the main line of the proof applies as well if $N_{max}=+\infty$, or if we assume that, immediately after each branching time and with probability $1+\frac1{N_{max}+1}-\frac1N$, a uniformly chosen individual is removed from the population, and/or that, immediately after each killing time and with probability $\frac{N_{min}-1}{N}$, a uniformly chosen individual is duplicated (where $N$ is the population size just after the event). Note that these choices of parameters may be used to model an increasing competition between individuals when the size of the population increases.
\end{rem}

\begin{proof}[Proof of Proposition~\ref{prop:nonexplPopModel}]
    We define 
    \begin{align*}
    \bar V:&F\to \mathbb [1,+\infty)\\
    :&\bx=[\bx_1,\ldots,\bx_N]\mapsto \sum_{k=1}^N V(\bx_k),
    \end{align*}
    so that $\bar V$ goes to infinity at infinity and for $\bx = [\bx_1,\ldots,\bx_N] \in F$,
    \begin{align*}
    \bar L \bar V(\bx)&=\sum_{k=1}^N \sum_{i=1}^d \left[\beta_i(\bx_k) (V(\bx_k+e_i)-V(\bx_k))+\delta_i(\bx_k)(V(\bx_k-e_i)-V(\bx_k))\right]\\
    &\qquad\qquad +\sum_{k=1}^N \left(b(\bx_k)-\kappa(\bx_k)\right)V(\bx_k)\\
    &\qquad\qquad+\mathbf{1}_{N=N_{min}}\sum_{k=1}^N \kappa(\bx_k)\frac{1}{N-1}\sum_{\ell=1,\ell\neq k}^N V(\bx_l)\\
    &\qquad\qquad-\mathbf{1}_{N=N_{max}}\sum_{k=1}^N b(\bx_k)\frac{1}{N+1}\sum_{\ell=1}^N (1+\mathbf{1}_{\ell=k})V(\bx_\ell).
    \end{align*}
    Now let $\kappa'$ be co-monotone with $V$ such that $\kappa''=\kappa-\kappa'$ is bounded. Then, using the fact that $b$ is bounded, we deduce that 
    \begin{align}
    \bar L \bar V(\bx)
    &\leq  (C+\|b\|_\infty)\overline{V}(\bx)+\mathbf{1}_{N=N_{min}}\left(\frac{1}{N-1}\sum_{k=1}^N \kappa(\bx_k)\sum_{{\stackrel{\ell=1}{\ell\neq k}}}^N V(\bx_\ell)-\sum_{k=1}^N\kappa(\bx_k) V(\bx_k)\right)\nonumber\\
    \notag\\
    &=(C+\|b\|_\infty)\overline{V}(\bx)+\frac{\mathbf{1}_{N=N_{min}}}{N-1}\left(\sum_{k=1}^N \kappa(\bx_k)\sum_{\ell=1}^N V(\bx_\ell)-N\sum_{k=1}^N \kappa(\bx_k)V(\bx_k)\right)\nonumber\\
    \notag\\
    &=  (C+\|b\|_\infty)\overline{V}(\bx)+\frac{N^2\mathbf{1}_{N=N_{min}}}{N-1}\left[\left(\sum_{k=1}^N \frac{\kappa(\bx_k)}{N}\right)\left(\sum_{k=1}^N \frac{V(\bx_k)}{N}\right)-\sum_{k=1}^N \frac{\kappa(\bx_k)V(\bx_k)}{N} \right]\nonumber\\
    \notag\\
    &\leq (C+\|b\|_\infty+4\|\kappa''\|_\infty)\overline{V}(\bx)\nonumber\\
    &\qquad\qquad+\frac{N^2\mathbf{1}_{N=N_{min}}}{N-1}\left[\left(\sum_{k=1}^N \frac{\kappa'(\bx_k)}{N}\right)\left(\sum_{k=1}^N \frac{V(\bx_k)}{N}\right)-\sum_{k=1}^N \frac{\kappa'(\bx_k)V(\bx_k)}{N} \right]\nonumber\\
    \notag\\
    &\leq (C+\|b\|_\infty+4\|\kappa''\|_\infty)\overline{V}(\bx),\label{eq:inequseful}
    \end{align}
    where we used the fact that $\kappa'$ and $V$ are co-monotone and the FKG inequality for the last inequality.
    
    \smallskip
    
    For all $n\geq 1$, let $U_n=\{\bx\in F,\ \max_{k\in\{1,\ldots,|\bx|\}} \bx_k> n\}$ and define the random variable $\theta_n:= \inf\{t\geq 0,\ (X_t^1,\ldots,X_t^N)\in U_n\}$. Note that $\theta_n$ is a stopping time since the constrained branching process is c\`adl\`ag.
    Moreover, denoting by $C_n$ the set of points in $F$ that can be reached in one jump from $F\setminus U_n$, we observe that $C_n$ is bounded and hence that  $\bar V(X_{t\wedge \theta_n})$ is almost surely uniformly bounded in $t\geq 0$. In particular, we deduce, using Dynkin's formula and the inequality~\eqref{eq:inequseful}, that, for all $n\geq 0$, all $t\geq 0$ and all $\bx\in F$, 
    \[
    \E_\bx(\bar V(\bX_{t\wedge\theta_n}))\leq \bar V(\bx)+(C+\|b\|_\infty+4\|\kappa''\|_\infty)\int_0^t \E_x(\bar V(\bX_{s\wedge\theta_n}))\,\D s.
    \]
    Hence, using Gr\"onwall's inequality, we deduce that
    \begin{align*}
    \E_\bx(\bar V(\bX_{t\wedge\theta_n}))\leq \bar V(\bx)e^{t(C+\|b\|_\infty+4\|\kappa''\|_\infty)}.
    \end{align*}
    Since $\{\bX_{t\wedge\theta_n}\in U_n,\theta_n\leq t\}=\{\theta_n\leq t\}$ up to a negligible event, we conclude that
    \[
    \inf_{\mathbf y\in U_n} \bar V(\mathbf y) \mathbb{P}_\bx(\theta_n\leq t)\leq \bar V(\bx)\,e^{t(C+\|b\|_\infty+4\|\kappa''\|_\infty)}.
    \]
    Denote by $\tau_\infty$ the explosion time of the process. Since the jump rate of the process is bounded up to time $\theta_n$, we deduce that $\theta_n\leq \tau_\infty$ almost surely. Hence
    \[
    \mathbb{P}_\bx(\tau_\infty\leq t)\leq \frac{\bar V(\bx)e^{t(C+\|b\|_\infty+4\|\kappa''\|_\infty)}}{\inf_{\mathbf y\in U_n} \bar V(\mathbf y)}\xrightarrow[n\to+\infty]{}0,
    \]
    so that $\tau_\infty=+\infty$ almost surely. This concludes the proof.
\end{proof}

Since the process defined in this section is a particular instance of the process described in Section~\ref{sec:descr}, the following corollary is a direct consequence of Proposition~\ref{prop:nonexplPopModel} and Theorem~\ref{thm:main}.

\begin{cor}
    Under the assumptions of Proposition~\ref{prop:nonexplPopModel}, we have, for all $T>0$, all bounded measurable functions $f:\mathbb Z_+^d\to \mathbb R$ and all initial position $\bX(0)\in F$,
    \[
    \mathbb E_{\bX(0)}\left[\Pi^A_T\Pi^B_T\sum_{k=1}^{|\bX(T)|}f(\bX(T)_k)\right]=\sum_{k=1}^{|\bX(0)|} \mathbf E_{\bX(0)_k}\left[f(X_T)\exp\left(\int_0^T (b(X_s)-\kappa(X_s))\,\mathrm ds\right)\right]
    \]
    where
    \[
    \Pi^A_T=\left(\frac{N_{min}-1}{N_{min}}\right)^{A_T}\text{ and }\Pi^B_T=\left(\frac{N_{max}+1}{N_{max}}\right)^{B_T}.
    \]
    In addition, for some constant $C>0$,
    \begin{multline*}
    \left\|\frac{\sum_{k=1}^{|\bX(0)|} \mathbf E_{\bX(0)_k}\left[f(X_T)\exp\left(\int_0^T (b(X_s)-\kappa(X_s))\,\mathrm ds\right)\right]}{\sum_{k=1}^{|\bX(0)|} \mathbf E_{\bX(0)_k}\left[\exp\left(\int_0^T (b(X_s)-\kappa(X_s))\,\mathrm ds\right)\right]}- \frac{\sum_{k=1}^{|\bX(T)|}f(\bX(T)_k)}{|\bX(T)|}\right\|_2\\
    \leq \frac{C  \exp(\|b\|_\infty T) \|f\|_\infty}{\frac{1}{|\bX(0)|}\sum_{k=1}^{|\bX(0)|} \mathbf E_{\bX(0)_k}\left[\exp\left(\int_0^T (b(X_s)-\kappa(X_s))\,\mathrm ds\right)\right]}\,\frac{1}{\sqrt{|\bX(0)|}}.
    \end{multline*}
\end{cor}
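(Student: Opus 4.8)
The plan is to recognise the constrained population model of this section as a particular instance of the BBMMI of Section~\ref{sec:descr}, so that the corollary reduces to a transcription of Theorem~\ref{thm:main}; the only genuine work is to verify the hypotheses and to collapse the penalisation factors $\Pi^A_T,\Pi^B_T$ into closed form.

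First I would fix the parameter dictionary and dispatch the hypotheses. Take $X$ to be the multidimensional birth-and-death process with rates $\beta_i,\delta_i$; since $\delta_i(x)=0$ on $\{x_i=0\}$, the process never leaves $\mathbb Z_+^d$, so there is no hard killing and $\tau_\partial=+\infty$ almost surely, whence Assumption~1 holds trivially ($\bP_x(\tau_\partial=t)=0$ and $\bP_x(\tau_\partial>t)=1$). Following Remark~\ref{rem-NminNmax}, set $b^{i_0}(x_i,i\in s)=b(x_{i_0})$, $\kappa^{i_0}(x_i,i\in s)=\kappa(x_{i_0})$, $p^{i_0}(x_i,i\in s)=\mathbf 1_{|s|=N_{min}}$ and $q^{i_0}(x_i,i\in s)=\mathbf 1_{|s|=N_{max}}$; then the balance condition~\eqref{eq:balance} is automatic, and since $N_{min}\geq 2$ the constraint~\eqref{eq:constraint} holds. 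With this dictionary the informal dynamics of this section—health state evolving as $X$, soft death at rate $\kappa$, binary branching at rate $b$, and the boundary cloning/removal mechanism—coincide with the BBMMI, so that $A_T,B_T$ agree with the resampling/selection counts of Section~\ref{sec:descr} and $m_0=\sum_k\delta_{\bX(0)_k}$, $m_T=\sum_k\delta_{\bX(T)_k}$. The one substantive hypothesis is Assumption~2: because $\kappa$ may be unbounded the total event rate is a priori uncontrolled, and non-explosion of $(\tau_n)$ is exactly the conclusion of Proposition~\ref{prop:nonexplPopModel} under~\eqref{eq:lyapa}. Granting it, Theorem~\ref{thm:main} applies.

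The key simplification is the evaluation of $\Pi^A_T,\Pi^B_T$ from~\eqref{Pi}. Since $p^{i_0}$ is supported on $\{|s|=N_{min}\}$, every resampling is triggered by a death occurring while the population has size $N_{min}$, and the duplication restores the size to $N_{min}$; hence $\bar N_{\rho_n}=N_{min}$ for all $n\leq A_T$. Dually $q^{i_0}$ is supported on $\{|s|=N_{max}\}$, so $\bar N_{\sigma_n}=N_{max}$ for all $n\leq B_T$. Substituting these constant sizes collapses the products to $\Pi^A_T=\big(\tfrac{N_{min}-1}{N_{min}}\big)^{A_T}$ and $\Pi^B_T=\big(\tfrac{N_{max}+1}{N_{max}}\big)^{B_T}$. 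Feeding these into~\eqref{M21}, together with $m_0Q_Tf=\sum_k\bE_{\bX(0)_k}[f(X_T)\exp(\int_0^T(b-\kappa)\,\D s)]$ (the indicator $\mathbf 1_{T<\tau_\partial}$ being identically $1$), yields the many-to-one identity; feeding them into~\eqref{L2bound}, with $m_0Q_T\mathbf 1_E=\sum_k\bE_{\bX(0)_k}[\exp(\int_0^T(b-\kappa)\,\D s)]$, $\bar N_0=|\bX(0)|$, $C_T=C\exp(c\|b\|_\infty T)$, and noting $m_T(\mathbf 1_E)=|\bX(T)|\geq N_{min}>0$ so that $\mathbf 1_{m_T\neq 0}=1$, yields the $L^2$ bound.

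I expect the only real obstacle to be bookkeeping rather than analysis: one must confirm that the size-indexing convention in~\eqref{Pi} records $\bar N_{\rho_n},\bar N_{\sigma_n}$ as the post-event sizes pinned to $N_{min}$ and $N_{max}$ (rather than the pre-event sizes $N_{min}-1$ and $N_{max}+1$), so that the products genuinely reduce to pure powers; and that the boundary mechanism stated informally here—cloning when the size would fall to $N_{min}-1$, removal when it would rise to $N_{max}+1$—is literally the almost-sure event $p^{i_0}=1$ (resp. $q^{i_0}=1$) of the general model, so that the counts $A_T,B_T$ coincide with those of~\eqref{Pi}. Once these identifications are in place, no further estimate is required.
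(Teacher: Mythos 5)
Your proposal is correct and takes essentially the same approach as the paper, whose entire proof is the remark that the constrained population model is a particular instance of the BBMMI, so the corollary follows directly from Proposition~\ref{prop:nonexplPopModel} (supplying Assumption~2) together with Theorem~\ref{thm:main}. The details you spell out—Assumption~1 holding trivially since the process never leaves $\mathbb Z_+^d$, and the collapse of $\Pi^A_T,\Pi^B_T$ to pure powers because $\bar N_{\rho_n}=N_{min}$ and $\bar N_{\sigma_n}=N_{max}$ record the post-event (size-preserving) population sizes—are exactly what the paper leaves implicit, and your bookkeeping is accurate.
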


Our aim is now to study the long-time behaviour of the (normalised) empirical distribution of the health parameter across the population. As mentioned in Remark~\ref{rem:uniformconv}, adaptation of classical arguments from~\cite{del2000branching,DelMoralMiclo2002,DelMoralMiclo2003,DelMoral2004} shows that bounded killing rate and a strong mixing conditions on the semi-group $Q$ are sufficient to obtain uniform in time estimates. Using for instance the results of~\cite{ChampagnatVillemonais2021}, one can checks that this is the case if $\kappa$ is bounded and if  there exist constants $C_1>0,C_2>0,\eta_0>0,n_0\geq 0$ such that, for all $x\in\mathbb Z_+^d$ satisfying $|x|\geq n_0$,
\begin{align}
&\label{eq:assumptionQSD1}
\sum_{i=1}^d (\delta_i(x)-\beta_i(x))\geq C_1\,|x|^{1+\eta_0}\\
&\label{eq:assumptionQSD3} \sum_{i=1}^d (\delta_i(x)-\beta_i(x))\geq C_2(\|b\|_\infty-b(x)+\kappa(x))|x|.
\end{align}

We consider now the more delicate situation where mixing condition is weaker, that is when~\eqref{eq:strong-mixing} holds true but with $C_Q$ depending on the initial distribution. More precisely, we consider the situation where there exist two positive functions $\psi_1,\psi_2:E\to (0,+\infty)$ such that $\psi_2\leq \psi_1$ and such that, for all $f:E\to\mathbb R$ with $|f|\leq \psi_1$, we have
\begin{align*}
\left|\frac{\mathbf E_\mu\left(f(X_t)\exp\left(\int_0^t b(X_s)-\kappa(X_s)\,\mathrm ds\right) \right)}{\mathbf E_\mu\left(\exp\left(\int_0^t b(X_s)-\kappa(X_s)\,\mathrm ds\right) \right)}-\nu_{X}(f)\right|\leq C\,e^{-\gamma t}\frac{\mu(\psi_{1})}{\mu(\psi_{2})},\quad\forall t\geq 0,
\end{align*}
where $\nu_X$ is a probability measure on $E$. This type of properties has been extensively studied in the past years and several criteria have been provided (see e.g.~\cite{ChampagnatVillemonais2017a,FerreRoussetEtAl2018,HinrichsKolbEtAl2018,GuillinNectouxEtAl2020,BansayeCloezEtAl2019,DelMoralHortonEtAl2022}).

Compared to the strong mixing condition mentioned in Remark~\ref{rem:uniformconv}, the novelty of the following result is that we consider unbounded killing rates, that we allow unbounded test functions $f$ and that the convergence of the renormalized semi-group is non-uniform in the initial distribution.

\begin{thm}
  Assume that there exists $a_0>0$ such that
\begin{align*}
\theta_N(x) \xrightarrow[|x|\to+\infty]{} +\infty.
\end{align*}
where
\begin{align*}
\theta_N(x):=\sum_{i=1}^d \left[(e^{-a_0}-1)\delta_i(x)-(e^{a_0}-1) \beta_i(x)\right]+\frac{N}{N-1}\left(\kappa(x)-\max_{|y|\leq |x|} \kappa(y)\right).
\end{align*}
Then the process is non-explosive and, for any sequence of initial positions $(\bX^{(N_{min})}(0))_{N_{min}\geq 2}$ such that
\begin{align}
 \label{eq:boundonLyapouX}
\sup_{N_{min}\geq 2 }\frac{1}{|\bX^{(N_{min})}(0)|}\sum_{k=1}^{|\bX^{(N_{min})}(0)|}\exp\left(a_0|\bX^{(N_{min})}_k(0)|\right)<+\infty,
\end{align} 
we have, for all $f:E\to\mathbb R$ such that $f(x)\leq \exp(a x)$ for some $a\in(0,a_0)$,
\begin{align*}
\limsup_{N_{min}\to+\infty}\sup_{T\in[0,+\infty)} \mathbb E_{\bX^{(N_{min})}(0)}\left|\frac{m_0 Q_T f}{m_0 Q_T\ind_E}-\widetilde m_T f\right|=0.
\end{align*}
\end{thm}

\begin{rem}
    For the sake of readability, we only provide a proof for the particular model of this section. 
    However, the method used therein can be easily adapted to a more general situation, at the expense of additional technicalities which depend on the underlying dynamic of the particles. In the general case, and technical difficulties put aside, $\theta_N(x)$ should be replaced by $-\frac{\mathcal LW(x)}{W(x) }+\frac{N}{N-1}\left(\kappa(x)-\max_{|y|\leq |x|} \kappa(y)\right)$ for some well chosen Lyapunov function $W$ which goes to infinity at infinity.
\end{rem}

\begin{rem}
    This result can be adapted to include the case where $X$ is a Markov process with \textit{hard} killing at a boundary, provided one can prove that the particle system does not degenerate toward the boundary. However this can be challenging, see for instance~\cite{GrigorescuKang2007,Villemonais2011,BieniekBurdzyEtAl2012}.
\end{rem}

\begin{proof}
    Let us first observe that our assumptions entail that, for all $a\in (0,a_0]$,
    \[
    (e^{-a}-1)\sum_{i=1}^d \delta_i(x)-(e^{a}-1)\sum_{i=1}^d \beta_i(x)\xrightarrow[|x|\to+\infty]{} +\infty.
    \]
    According to~\cite{ChampagnatVillemonais2017a} and setting $\psi_{1,a}=\exp(a|x|)$ for all $a\in (0,a_0]$ and $x\in E=\mathbb Z_+^d$, this implies that there exist a bounded positive function $\psi_{2,a}$ and $c_{2,a},\gamma_{2,a}>0$ such that $Q_t\psi_{2,a}\geq c_{2,a}e^{-\gamma_{2,a}t}$, a probability measure $\nu_{QS}$ on $E$ such that, for all $f:E\to\mathbb R$ with $|f|\leq \psi_{1,a}$, 
    \begin{align}
    \label{eq:conv-QSD}
    \left|\frac{\mathbf E_\mu\left(f(X_t)\exp\left(\int_0^t b(X_s)-\kappa(X_s)\,\mathrm ds\right) \right)}{\mathbf E_\mu\left(\exp\left(\int_0^t b(X_s)-\kappa(X_s)\,\mathrm ds\right) \right)}-\nu_{X}(f)\right|\leq C_a\,e^{-\gamma_a t}\frac{\mu(\psi_{1,a})}{\mu(\psi_{2,a})},\quad\forall t\geq 0,
    \end{align}
    where $C_a$ and $\gamma_a$ are positive constant that may depend on $a\in(0,a_0]$. Note that, in order to apply the results of~\cite{ChampagnatVillemonais2017a}, we can choose $\psi_{2,a}=\psi_{2,a_0}$ for all $a\in(0,a_0]$, so that we can choose $\psi_{2,a}$ (resp. $c_{2,a}$, $\gamma_{2,a}$) to not depend on $a$. For this reason, in what follows, we will use instead the notation $\psi_2$ (resp. $c_2$, $\gamma_2$).

    We define the functions $W=\psi_{1,a_0}$ and  $\bar W: F\to \mathbb R_+$ by
    \begin{align*}
    \bar W(\mathbf x)=\frac1N \sum_{k=1}^N W(\mathbf x_k)=\frac1N \sum_{k=1}^N \exp(a_0\mathbf x_k).
    \end{align*}
    One checks that, for all $\mathbf x\in F$,
    \begin{align*}
    \bar L\bar W(\mathbf x)&=\frac1N\sum_{k=1}^N \sum_{i=1}^d \left[\beta_i(\bx_k) (W(\bx_k+e_i)-W(\bx_k))+\delta_i(\bx_k)(W(\bx_k-e_i)-W(\bx_k))\right]\\
    &\quad\quad +\sum_{k=1}^N b(\bx_k)\left(\frac{W(\bx_k)}{N+1}-\frac{\bar W(\bx)}{N+1}\right)+\sum_{k=1}^N \kappa(\bx_k)\left(-\frac{W(\bx_k)}{N-1}+\frac{\bar W(\bx)}{N-1}\right)\\
    &\leq -\frac1N\sum_{k=1}^d\theta_N(\bx_k)W(\bx_k)+\|b\|_\infty \bar W(\bx) +\frac{N}{N-1}\left(\frac1N\sum_{k=1}^N \kappa(\bx_k)\right)\left(\frac1N\sum_{k=1}^N W(\bx_k)\right)\\
    &\quad\quad-\frac1{N-1}\sum_{k=1}^N(\max_{|y|\leq\bx_k} \kappa(y))W(\bx_k)\\
    &\leq -\frac1N\sum_{k=1}^d(\theta_N(\bx_k)-\|b\|_\infty)W(\bx_k).
    \end{align*}
    
    Using Dynkin's formula and the fact that $\theta_N(x)\to+\infty$ when $|x|\to+\infty$, this classically entails that the process is non-explosive and that there exists some constant $M>0$ such that, for any $\bX(0)\in F$, 
    \begin{align}
    \label{eq:supExp}
    \sup_{t\geq 0} \mathbb  E_{\bX(0)}(\bar W(\bX(t)))\leq \bar W(\bX(0))+M,
    \end{align}
    and hence, using~\eqref{eq:boundonLyapouX}, we deduce that
    \begin{align}
    \label{eq:Atoinfty}
    \sup_{t\geq 0} \mathbb P_\bx \left(\bar W(\bX_t)\geq A\right)\leq \sup_{t\geq 0} \frac{\mathbb  E(\bar W(\bX(t)))}{A} \xrightarrow[A\to+\infty]{} 0.
    \end{align}
    Since $\psi_{2}$ is positive (and hence locally bounded away from $0$) and $W(x)\to+\infty$ when $|x|\to+\infty$, we also deduce that
    \begin{align}
    \label{eq:epstoinfty}
    \sup_{t\geq 0} \mathbb P_\bx \left(\bar\psi_{2}(\bX(t))\leq \varepsilon\right)\xrightarrow[\varepsilon\to0]{}0,
    \end{align}
    where $\bar\psi_{2}(\bx)=\frac1N\sum_{k=1}^N \psi_{2}(\bx_k)$.
    
 From now on, fix $a\in(0,a_0)$ and let $f:E\to \mathbb R$ such that $|f|\leq \psi_{1,a}$. Then, for all $T\geq 0$, we have, setting $p=a_0/a>1$ and using Jensen's inequality,
    \begin{align}
    \mathbb E_{\bX(0)}\left|\frac{m_0 Q_T f}{m_0 Q_T\ind_E}-\widetilde m_T f\right|
    &\leq \mathbb E_{\bX(0)}\left(\left|\frac{m_0 Q_T f}{m_0 Q_T\ind_E}-\widetilde m_T f\right|\ind_{\widetilde m_{T-t}\psi_{1,a}\leq A\text{ and }\widetilde m_{T-t}\psi_{2}\geq \varepsilon}\right)\nonumber\\
    &\qquad +\frac{m_0 Q_T \psi_{1,a}}{m_0 Q_T\ind_E}\mathbb P_{\bX(0)}\left(\widetilde m_{T-t}\psi_{1,a}> A\text{ or }\widetilde m_{T-t}\psi_{2}< \varepsilon\right) \nonumber\\
    &\qquad+\mathbb E_{\bX(0)}\left(\widetilde m_T W\right)\mathbb P_{\bX(0)}\left(\widetilde m_{T-t}\psi_{1,a}> A\text{ or }\widetilde m_{T-t}\psi_{2}< \varepsilon\right),\label{eq:ineq1}
    \end{align}
    where the second and third terms go to $0$ when $A$ and $\varepsilon$ go to $0$ by~\eqref{eq:Atoinfty} and~\eqref{eq:epstoinfty}.
    Writing $\mathcal E_{A,\varepsilon}=\left\{\widetilde m_{T-t}\psi_{1,a}\leq A\text{ and }\widetilde m_{T-t}\psi_{2}\geq \varepsilon\right\}$, we decompose the first term as follows: for any $t\in [0,T]$, we have
    \begin{align}
    \mathbb E_{\bX(0)}\left(\left|\frac{m_0 Q_T f}{m_0 Q_T\ind_E}-\widetilde m_T f\right|\ind_{\mathcal E_{A,\varepsilon}}\right)
    &\leq \mathbb E_{\bX(0)}\left(\left|\frac{m_0 Q_T f}{m_0 Q_T\ind_E}-\frac{\widetilde m_{T-t} Q_t f}{\widetilde m_{T-t} Q_t \ind_E}\right|\ind_{\mathcal E_{A,\varepsilon}}\right)\nonumber \\
    &\qquad +
    \mathbb E_{\bX(0)}\left(\left|\frac{\widetilde m_{T-t} Q_t f}{\widetilde m_{T-t} Q_t \ind_E}-\widetilde m_T f\right|\ind_{\mathcal E_{A,\varepsilon}}\right)\nonumber \\
    &\leq C_ae^{-\gamma_a t}\frac{A}{\varepsilon}+ \mathbb E_{\bX(0)}\left(\left|\frac{\widetilde m_{T-t} Q_t f}{\widetilde m_{T-t} Q_t \ind_E}-\widetilde m_T f\right|\ind_{\mathcal E_{A,\varepsilon}}\right),\label{eq:ineq2}
    \end{align}
    where we used~\eqref{eq:conv-QSD}.  Then, for any $K>0$, we have
    \begin{align*}
    \mathbb E_{\bX(0)}\left(\left|\frac{\widetilde m_{T-t} Q_t f}{\widetilde m_{T-t} Q_t \ind_E}-\widetilde m_T f\right|\ind_{\mathcal E_{A,\varepsilon}}\right)
    &\leq \mathbb E_{\bX(0)}\left(\left|\frac{\widetilde m_{T-t} Q_t (f\wedge K)}{\widetilde m_{T-t} Q_t \ind_E}-\widetilde m_T (f\wedge K)\right|\ind_{\mathcal E_{A,\varepsilon}}\right)\\
    &\qquad + \mathbb E_{\bX(0)}\left(\frac{\widetilde m_{T-t} Q_t (\psi_{1,a}\ind_{\psi_{1,a}> K})}{\widetilde m_{T-t} Q_t \ind_E}+\widetilde m_T (\psi_{1,a}\ind_{\psi_{1,a}> K})\right)\\
    &\leq \mathbb E\left(\frac{\Vert Q_t\mathbf{1}_E\Vert_{\infty}}{\widetilde m_{T-t} Q_t\mathbf 1_E}\frac{K}{\sqrt{ m_{T-t}\mathbf 1_E}}\ind_{\mathcal E_{A,\varepsilon}}\right)\\
    &\qquad +K^{1-a_0/a}\mathbb E_{\bX(0)}\left(\frac{\widetilde m_{T-t} Q_t W}{\widetilde m_{T-t} Q_t \ind_E}+\widetilde m_T W\right)
    \end{align*}
    where we used Theorem~\ref{thm:main}.  
    According to~\cite{ChampagnatVillemonais2017a}, there exists some constant $C>0$ such that
    \begin{align*}
    \frac{\widetilde m_{T-t} Q_t W}{\widetilde m_{T-t} Q_t \ind_E}\leq C+\widetilde m_{T-t} W=C+\bar W(\bX(t)).
    \end{align*}
    Using in addition~\eqref{eq:supExp} and the fact that $m_{T-t}\mathbf 1_E\geq N_{min}$, we deduce that 
    \begin{align*}
    \mathbb E_{\bX(0)}\left(\left|\frac{\widetilde m_{T-t} Q_t f}{\widetilde m_{T-t} Q_t \ind_E}-\widetilde m_T f\right|\ind_{\mathcal E_{A,\varepsilon}}\right)
    &\leq \mathbb E\left(\frac{\Vert Q_t\mathbf{1}_E\Vert_{\infty}}{\widetilde m_{T-t} Q_t\mathbf 1_E}\frac{K}{\sqrt{ N_{min}}}\ind_{\mathcal E_{A,\varepsilon}}\right)+K^{1-a_0/a}(C+2M+2M'),
    \end{align*}
    where $M'>0$ is the finite supremum in~\ref{eq:boundonLyapouX}.
     Assuming without loss of generality that $\psi_{2}\leq 1$, we observe that 
    \[
    \widetilde m_{T-t} Q_t\mathbf 1_E\geq \widetilde m_{T-t} Q_t\psi_{2}\geq c_{2}e^{-\gamma_2 t} \widetilde m_{T-t}\psi_{2}\geq c_{2}e^{-\gamma_2 t} \varepsilon\text{ on }\mathcal E_{A,\varepsilon},
    \]
    so that, using the inequality $\Vert Q_t\mathbf{1}_E\Vert_{\infty}\leq e^{\|b\|_\infty t}$,
    \begin{align*}
    \mathbb E_{\bX(0)}\left(\left|\frac{\widetilde m_{T-t} Q_t f}{\widetilde m_{T-t} Q_t \ind_E}-\widetilde m_T f\right|\ind_{\mathcal E_{A,\varepsilon}}\right)
    &\leq \frac{e^{\|b\|_\infty t}}{c_{2,a}e^{-\gamma_2 t}\varepsilon}\frac{K}{\sqrt{ N_{min}}}+K^{1-a_0/a}(C+2M+2M').
    \end{align*}
    Finally, using this,~\eqref{eq:ineq1} and~\eqref{eq:ineq2}, we deduce that
    \begin{align*}
    \mathbb E_{\bX(0)}\left|\frac{m_0 Q_T f}{m_0 Q_T\ind_E}-\widetilde m_T f\right|
    &\leq  C_ae^{-\gamma_a t}\frac{A}{\varepsilon}+ \frac{e^{\|b\|_\infty t}}{c_{2,a}e^{-\gamma_2 t}\varepsilon}\frac{K}{\sqrt{ N_{min}}}+K^{1-a_0/a}(C+2M+2M')+g(A,\varepsilon),
    \end{align*}
    where $g(A,\varepsilon)\to0$ when $A$ or $\varepsilon$ goes to $0$. 
        We easily deduce that, for all $\eta>0$, the exists $T_\eta>0$ such that
    \begin{align*}
    \limsup_{N_{min}\to+\infty}\sup_{T\geq T_\eta} \mathbb E_{\bX(0)}\left|\frac{m_0 Q_T f}{m_0 Q_T\ind_E}-\widetilde m_T f\right|\leq \eta.
    \end{align*}
    In addition, it is straightforward from~Theorem~6 that
    \begin{align*}
    \limsup_{N_{min}\to+\infty}\sup_{T< T_\eta} \mathbb E_{\bX(0)}\left|\frac{m_0 Q_T f}{m_0 Q_T\ind_E}-\widetilde m_T f\right|=0.
    \end{align*}
    We conclude that
    \begin{align*}
    \limsup_{N_{min}\to+\infty}\sup_{T\in[0,+\infty)} \mathbb E_{\bX(0)}\left|\frac{m_0 Q_T f}{m_0 Q_T\ind_E}-\widetilde m_T f\right|=0.
    \end{align*}
\end{proof}

\subsection{Piecewise deterministic Markov processes}\label{sec:PDMP}
Due to Assumption 1 in Section \ref{sec:main}, the results presented thus far do not hold for the case where $(X_t)_{t \ge 0}$ is a piecewise deterministic Markov process (PDMP) and $E$ is a bounded domain with absorbing boundary conditions, since two independent copies of the process may hit $\partial E$ at the same time. In this section, we consider a particular example, namely the neutron transport equation (NTE), in order to discuss a way to avoid this problem. 

The NTE is a balance equation that describes the behaviour of neutrons in a fissile medium such as a nuclear reactor. In such systems neutrons move in straight lines with a fixed speed until they either come into contact with the boundary of the reactor, at which point they are absorbed, or they collide with the nucleus of an atom. When the latter occurs, either the neutron undergoes a scattering event where the particle bounces off the nucleus and continues its motion but with a new velocity, or a fission event occurs where the collision causes new neutrons to be produced with identical spatial positions, but potentially different velocities. 

In this setting, we have $E = D \times V$ where $D \subset \mathbb R^3$ is the spatial domain, which we assume to be open and bounded with smooth boundary, $\partial D$, and $V = \{\upsilon \in \mathbb R^3 : v_{min} < |\upsilon| < v_{max}\}$ is the velocity domain, where $0 < v_{min} \le v_{max} < \infty$. The NTE can then be stated as follows:

\begin{align}
\frac{\partial}{\partial t}\psi_t(r, \upsilon) &=\upsilon\cdot\nabla\psi_t(r, \upsilon)  -(\sigma_{\mathtt{s}}(r, \upsilon) + \sigma_{\mathtt{f}}(r, \upsilon))\psi_t(r, \upsilon)\notag\\
&+ \sigma_{\texttt{s}}(r, \upsilon)\int_{V}\psi_t(r, \upsilon') \pi_{\texttt{s}}(r, \upsilon, \upsilon'){\rm d}\upsilon' + \sigma_{\texttt{f}}(r, \upsilon) \int_{V}\psi_t(r, \upsilon') \pi_{\texttt{f}}(r, \upsilon, \upsilon'){\rm d}\upsilon',
\label{bNTE}
\end{align}
where
\begin{align*}
\sigma_{\texttt{s}}(r, \upsilon) &: \text{ the rate at which scattering occurs from incoming velocity $\upsilon$ at position $r$,}\\
\sigma_{\texttt{f}}(r, \upsilon) &: \text{  the rate at which fission occurs from incoming velocity $\upsilon$  at position $r$,}\\
\pi_{\texttt{s}}(r, \upsilon, \upsilon') &: \text{  probability density that an incoming velocity $\upsilon$ at position $r$ scatters to an } \\
 &\hspace{0.5cm}\text{outgoing velocity, with probability $\upsilon'$ satisfying }\textstyle{\int_V}\pi_{\texttt{s}}(r, \upsilon, \upsilon'){\rm d}\upsilon'=1,\text{ and }\\
 \pi_{\texttt{f}}(r, \upsilon, \upsilon') &:  \text{  density of expected neutron yield at velocity $\upsilon'$ from fission with }   \\
 &\hspace{0.5cm}\text{incoming velocity  $\upsilon$ satisfying } \textstyle{\int_V\pi_{\texttt{f}}}(r, \upsilon, \upsilon'){\rm d}
 \upsilon' <\infty.
\end{align*}

Moreover, we impose the following initial and boundary conditions
\begin{equation}
\left.
\begin{array}{ll}
\psi_0(r, \upsilon) = g(r, \upsilon) &\text{ for }r\in D, \upsilon\in{V},
\\
&
\\
\psi_t(r, \upsilon) = 0& \text{ for }r\in \partial D
\text{ if }\upsilon
\cdot{\bf n}_r>0,
\end{array}
\right\}
\label{BC}
\end{equation}
where  ${\bf n}_r$ is the outward facing normal of $D$ at $r\in \partial D$ and $g: D\times {V}\to [0,\infty)$ is a bounded, measurable function. 

\smallskip

Under the assumptions, 
\begin{itemize}
\item[(A1)] $\sigma_{\mathtt{s}}$, $\sigma_{\mathtt{f}}$, $\pi_{\mathtt{s}}$ and $\pi_{\mathtt{f}}$ are uniformly bounded away from infinity.
\item[(A2)] $\inf_{r \in D, \upsilon, \upsilon' \in V}(\sigma_{\texttt{s}}(r,\upsilon)\pi_{\texttt{s}}(r, \upsilon, \upsilon')+ \sigma_{\texttt{f}}(r,\upsilon)\pi_{\texttt{f}}(r, \upsilon, \upsilon')) > 0$,
\end{itemize}
it was shown in~\cite{CoxHarrisEtAl2019, HortonEtAl2018, HK} that the NTE can be modelled using a weighted PDMP, also known as the neutron random walk (NRW). More precisely, setting
\begin{align}
\alpha(r,\upsilon) &= \sigma_{\texttt{s}}(r,\upsilon)+ \sigma_{\texttt{f}}(r,\upsilon)\int_V\pi_{\texttt{f}}(r, \upsilon,\upsilon'){\rm d}\upsilon' \label{alpha}\\
\pi(r, \upsilon, \upsilon') &= (\alpha(r,\upsilon))^{-1}\left[\sigma_{\texttt{s}}(r,\upsilon)\pi_{\texttt{s}}(r, \upsilon, \upsilon')+ \sigma_{\texttt{f}}(r,\upsilon)\pi_{\texttt{f}}(r, \upsilon, \upsilon')\right],\label{pi}\\
\beta(r,\upsilon) &= \alpha(r,\upsilon) -  \sigma_{\texttt{s}}(r,\upsilon)-  \sigma_{\texttt{f}}(r,\upsilon), \label{beta}
\end{align}
we define the NRW $((R_t, \Upsilon_t)_{t \ge 0}, \mathbf{P}_{(r, \upsilon)})$ as follows. From an initial configuration $(r, \upsilon) \in D \times V$, the particle will propagate linearly until it is either absorbed at the boundary of the domain or, at rate $\alpha$ the process scatters off a nucleus and a new velocity is chosen according to $\pi$. In order words, the infinitesimal generator associated to $((R_t, \Upsilon_t)_{t \ge 0}, \mathbf{P})$ is given by
\begin{equation}\label{NRW-gen}
	Lf(r,\upsilon)=\mathtt{T}f+\alpha(r,\upsilon)\int_{V} (f(r,\upsilon')-f(r,\upsilon))\,\pi(r,\upsilon,\upsilon'),\D \upsilon,
\end{equation}
where $\mathtt{T}$ is defined for regular functions by $\mathtt{T}f(r,\upsilon)=\upsilon\cdot\nabla_r f(r,\upsilon)$ and, more generally and when the limit exists, by 
\[
	\mathtt{T}f(r,\upsilon)=\lim_{\varepsilon\to 0^+} \frac{f(r+\varepsilon\upsilon)-f(r,\upsilon)}{\varepsilon}.
\]
Then, for a bounded, measurable function $g$, and $(r, \upsilon)\in D \times V$
\[
\psi_t[g](r, \upsilon) \coloneqq \bE_{(r, \upsilon)}\left[{\rm e}^{\int_0^t\beta(R_s, \Upsilon_s){\rm d}s}g(R_t, \Upsilon_t)\mathbf{1}_{(t < \tau_D)}\right],
\]
solves~\eqref{bNTE}--\eqref{BC}, where $\tau_D$ is the first exit time of the NRW from $D$.

\smallskip

It was also shown in \cite{HK} that if (A1) and (A2) are satisfied, and that if there exists an $ \varepsilon>0$ such that
\begin{itemize}
\item[(B1)] $\textstyle{D_{\varepsilon} \coloneqq \{r \in D : \inf_{y\in \partial D}|r - y| > \varepsilon{\texttt v}_{\texttt{max}}\}}$ is non-empty and connected,
\item[(B2)] there exist $0 < s_{\varepsilon} < t_{\varepsilon}$ and ${\color{black}\gamma} > 0$ such that, for all $r \in D \backslash D_{\varepsilon}$, there exists $K_r \subset V$ measurable such that $\text{Vol}({K_r}) \ge {\color{black}\gamma} >0$ and for all $\upsilon \in K_r$, $r + \upsilon s \in D_{\varepsilon}$ for every $s \in [s_{\varepsilon}, t_{\varepsilon}]$ and $r+\upsilon s \notin \partial D$ for all $s\in [0, s_{\varepsilon}]$,
\end{itemize}
the semigroup $(\psi_t)_{t \ge 0}$ exhibits the following Perron Frobenius behaviour.

\bigskip
\begin{figure}[h!]
\includegraphics[scale = 0.5]{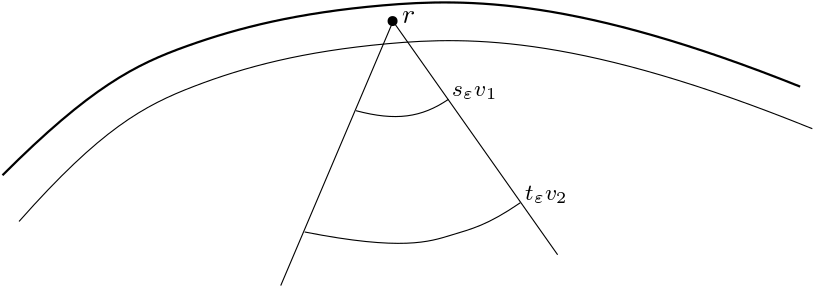}
\caption{Illustration of B2 in $\mathbb R^2$. Assumption B2 implies that the set $\{r + \upsilon s : \upsilon \in K_r, s \in [s_\varepsilon, t_\varepsilon]\} \subset D_\varepsilon$, which is illustrated by the area enclosed by $v_1s_\varepsilon$ and $v_2t_\varepsilon$, with $v_1 = \min\{|\upsilon| : \upsilon \in K_r\}$ and $v_2$ denoting the maximum.} 
\end{figure}

\begin{thm*}[\cite{HK}]
There exists  a $\lambda_*\in\mathbb{R}$, a positive right eigenfunction $\varphi \in L^+_\infty(D\times V)$ and a left eigenmeasure which is absolutely continuous with respect to Lebesgue measure on $D\times V$ with density $\tilde\varphi\in L^+_\infty(D\times V)$, both having associated eigenvalue ${\rm e}^{\lambda_* t}$, and such that $\varphi$  (resp. $\tilde\varphi$) is uniformly (resp. a.e. uniformly) bounded away from zero on each compactly embedded subset of $D\times V$. In particular, for all $g\in L^+_{\infty}(D\times V)$,
\begin{equation}
\langle\tilde\varphi, \psi_t[g]\rangle = {\rm e}^{\lambda_* t}\langle\tilde\varphi,g\rangle\quad  \text{(resp. } 
\psi_t[\varphi] = {\rm e}^{\lambda_* t}\varphi
\text{)}, \quad t\ge 0.
\label{leftandright}
\end{equation}
Moreover, there exist $C,\varepsilon>0$ such that 
\begin{equation}
\sup_{g\in L^+_\infty(D\times V): \Vert g\Vert_\infty\leq 1}  \left\|{\rm e}^{-\lambda_* t}{\varphi}^{-1}{\psi_t[g]}-\langle\tilde\varphi, g\rangle\right\|_\infty \leq   C {\rm e}^{-\varepsilon t}, \text{ for all $t \ge 0$ large enough}.
\label{spectralexpsgp}
\end{equation}
\end{thm*}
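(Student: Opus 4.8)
The plan is to read \eqref{leftandright}--\eqref{spectralexpsgp} as a Perron--Frobenius statement for the sub-Markovian Feynman--Kac semigroup of the neutron random walk killed at $\partial D$, and to obtain it from the general quasi-stationarity criterion of Champagnat--Villemonais. First I would remove the multiplicative potential: since $\beta$ is bounded by (A1), write $\beta=\|\beta\|_\infty-(\|\beta\|_\infty-\beta)$ with $\|\beta\|_\infty-\beta\geq 0$ and set $\hat\psi_t:=\e^{-\|\beta\|_\infty t}\psi_t$, so that
\[
\hat\psi_t[g](r,\upsilon)=\bE_{(r,\upsilon)}\!\left[\e^{-\int_0^t(\|\beta\|_\infty-\beta(R_s,\Upsilon_s))\,\D s}\,g(R_t,\Upsilon_t)\mathbf{1}_{t<\tau_D}\right]
\]
is the semigroup of the same PDMP carrying an \emph{extra} nonnegative soft killing rate $\|\beta\|_\infty-\beta$ in addition to the hard killing at the boundary; in particular $\hat\psi_t\mathbf{1}\leq 1$. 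Proving the eigen-relations and the mixing bound for $\hat\psi$ with eigenvalue $\e^{-\hat\lambda t}$ immediately yields them for $\psi$ with $\lambda_*=\|\beta\|_\infty-\hat\lambda$ and the same $\varphi,\tilde\varphi$.

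The Champagnat--Villemonais criterion then asks, writing $\tau$ for the killing time of this doubly-killed walk (soft killing $\|\beta\|_\infty-\beta$ together with hard killing at $\partial D$), for a time $t_0>0$, a probability measure $\nu$ on $D\times V$ and constants $c_1,c_2>0$ such that \textbf{(i)} the conditioned minorization $\bP_{(r,\upsilon)}\!\left((R_{t_0},\Upsilon_{t_0})\in\cdot \,\middle|\, t_0<\tau\right)\geq c_1\,\nu(\cdot)$ holds for every starting point $(r,\upsilon)\in D\times V$, and \textbf{(ii)} the survival comparison $\bP_\nu(t<\tau)\geq c_2\,\sup_{(r,\upsilon)}\bP_{(r,\upsilon)}(t<\tau)$ holds for every $t\geq 0$. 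Granting these, the abstract theorem delivers a unique quasi-stationary distribution, a bounded right eigenfunction, a left eigenmeasure with bounded density, and the geometric convergence of $\hat\psi_t[g]/\hat\psi_t\mathbf{1}$ uniformly over $\|g\|_\infty\leq 1$, which is exactly the content of \eqref{leftandright}--\eqref{spectralexpsgp}; the minorization moreover forces the strict positivity and the stated uniform lower bounds of $\varphi$ and $\tilde\varphi$ on each compactly embedded subset.

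The substance of the argument is the verification of (i) and (ii), and this is where (A2), (B1) and (B2) enter, through an entirely geometric--probabilistic construction. By (B2), from any point $(r,\upsilon)$ near $\partial D$ there is a velocity set of volume at least $\gamma$ which, after a scattering event, drives the particle into the well-interior region $D_\varepsilon$ within the window $[s_\varepsilon,t_\varepsilon]$ while keeping it off the boundary up to $s_\varepsilon$; combined with the bounded rates of (A1) this gives a uniform-in-$(r,\upsilon)$ lower bound on the probability of sitting in $D_\varepsilon$ at a fixed time without prior killing. By (A2) the scattering/fission redistribution kernel has a density bounded away from $0$, so after a controlled number of jumps the sub-probability law of $(R_{t_0},\Upsilon_{t_0})$ dominates a fixed measure whose density is bounded below on $D_\varepsilon\times V'$ for a suitable $V'\subset V$; normalising by the survival probability gives the conditioned minorization (i) with $\nu$ supported on $D_\varepsilon\times V'$. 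The comparison (ii) follows from the same confinement estimate, since $\nu$ charges only the interior and hence $\bP_\nu(t<\tau)$ cannot decay faster than a fixed fraction of the worst-case survival $\sup_{(r,\upsilon)}\bP_{(r,\upsilon)}(t<\tau)$.

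I expect the main obstacle to be precisely this minorization step. Unlike a diffusion, the neutron random walk has no elliptic smoothing, so the lower bound on the law of $(R_{t_0},\Upsilon_{t_0})$ cannot be read off from a Harnack inequality and must be built by hand: one has to follow the piecewise-linear trajectories, insert the finitely many scattering events supplied by (A2) to spread the velocity, and at every stage guarantee that the deterministic transport does not carry the particle into the absorbing boundary before time $t_0$. Making the resulting lower bound uniform over all starting points, both deep inside $D$ and arbitrarily close to $\partial D$, is the delicate part; this is exactly the purpose of the geometric hypotheses (B1)--(B2), which are engineered so that such interior-confining velocity cones exist with uniformly positive volume.
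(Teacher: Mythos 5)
This theorem is imported verbatim from \cite{HortonEtAl2018} and carries no proof in the present paper, so the comparison is with the proof in that reference: your route is essentially the same one, namely a Champagnat--Villemonais-type quasi-stationarity criterion (conditional minorization plus a survival comparison) for the killed neutron random walk, with the bounded potential $\beta$ handled by an elementary normalization (your $\e^{-\|\beta\|_\infty t}$ reduction is the same device this paper itself uses to prove Proposition~\ref{prop:BD-QSD}, and is equivalent, up to constants, to the penalized-semigroup form of the criterion invoked in \cite{HortonEtAl2018}), and with the minorization built geometrically from (A2) and (B1)--(B2) exactly as you outline. Your only overstatement is the claim that the abstract theorem by itself delivers a left eigenmeasure that is absolutely continuous with $L^+_\infty$ density together with the uniform interior lower bounds on $\varphi$ and $\tilde\varphi$: in \cite{HortonEtAl2018} these regularity assertions require additional arguments specific to the neutron random walk (using the upper and lower bounds on the scattering kernel and a decomposition of the transition kernel into its singular and absolutely continuous parts), though they graft onto your scheme without changing its structure.
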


This theorem therefore provides a way to build Monte Carlo simulations to estimate the eigenvalue $\lambda_*$ and the eigenfunctions $\varphi$ and $\tilde\varphi$ via the NRW. We refer the reader to \cite{cox2020monte} for further details. However, although simulating a single weighted path has advantages over simulating an entire tree of neutrons, the transience of this process means that many of the particles exit the domain relatively quickly and therefore, a large number of simulations are required in order to obtain information about the system. This also leads to problems with the variance of the estimators. In order to deal with this problem, the notion of `$h$-transform' was developed in \cite{cox2020monte}, where one biases the NRW to prevent it from exiting the domain. This idea is similar to that presented in~\cite{Rousset2006}, where the author use $h$-transforms to build Monte-Carlo approximations. Therein, the main result requires that the resulting $h$-transformed process has bounded killing rate. In our case, although this can be theoretically obtained, the natural candidates for function $h$ result in a process with unbounded killing rate. We show below that the associated BBMMI is non-explosive, which ensures that Theorem~\ref{thm:main} applies.

\smallskip 

More precisely, let $h$ be a bounded positive function on $D \times V$ such that $h = 0$ on $\partial D$ and define the following change of measure,
\begin{equation}
\left.\frac{{\rm d} \mathbf{P}^h_{(r,\upsilon)}}{{\rm d} \mathbf{P}_{(r,\upsilon)}}\right|_{\sigma((R_s,\Upsilon_s), s\leq t)}:  = 
\exp\left(-\int_0^t  \frac{{\emph{\texttt J}}h (R_s,\Upsilon_s)}{h (R_s,\Upsilon_s)}{\rm d} s\right)\prod_{i=1}^{N_t}\frac{h(R_{T_{i}}, \Upsilon_{T_{i}})}{h(R_{T_i}, \Upsilon_{T_{i-1}})} \, \mathbf{1}_{(t<\tau^{D})},
\label{hCOM}
\end{equation}
where 
\begin{equation}
{\emph{\texttt J}}g(r, \upsilon) = \alpha(r, \upsilon)\int_V[g(r, \upsilon') - g(r, \upsilon)]\pi(r, \upsilon, \upsilon'){\rm d}\upsilon',
\label{NRWscatter}
\end{equation}
$(T_i)_{i \ge 0}$ are the scatter times of the NRW with $T_0 = 0$, and $N_t = \sup\{i : T_i \le t\}$.

\smallskip

Then, $((R, \Upsilon), \mathbf{P}^h)$ also defines a NRW but with scattering operator 
\begin{equation}
{\emph{\texttt J}}_h g(r, \upsilon) = \alpha(r, \upsilon)\int_V[g(r, \upsilon') - g(r, \upsilon)]\frac{h(r, \upsilon')}{h(r, \upsilon)}\pi(r, \upsilon, \upsilon'){\rm d}\upsilon'.
\label{hNRWscatter}
\end{equation}

The idea is that the factor $h(r, \upsilon')/h(r, \upsilon)$ in the above integral forces the NRW to scatter when it is approaching the boundary, thus preventing it from being killed. Thus, if we can show that this process satisfies Assumption 1, we may use the BBMMI process in conjunction with the $h$-transformed process to estimate the leading eigentriple $(\lambda_*, \varphi, \tilde\varphi)$, for example. Indeed, it is straightforward to show that the leading eigentriple of the $h$-transformed process is given by $(\lambda_*, \varphi/h, h\tilde\varphi)$ and so we may use the BBMMI process to estimate this triple, and hence obtain an estimate for the original quantities $(\lambda_*, \varphi, \tilde\varphi)$. The rest of this section is dedicated to verifying Assumption 1 for the transformed process. We refer the reader to Section \ref{sec:numerics} for the numerical aspects. 

To this end, fix $\delta=1/(2\|\alpha\|_\infty)$ and let $\phi:[0,+\infty)\to[0,\delta]$ be a regular non-decreasing function such that $\phi(x)=x$ for all $x\in[0,\delta/2]$ and $\phi(x)=\delta$ for all $x\geq \delta$. We set
\[
h(r,\upsilon)=\phi\left(\kappa_{r,\upsilon}^D\right),
\]
where $\kappa_{r, \upsilon}^D = \inf\{t > 0 : r + \upsilon t \notin D\}$.
Using the following result, one can express expectations with respect to $\mathbf P$ as Feynman-Kac formulas related to $\mathbf P^h$. In what follows, we say that $(r,\upsilon)$ is in a vicinity of $\partial D$ if $h(r,\upsilon)< \delta/2$. 

\begin{lem}
	Suppose (A1), (A2) hold. A process with law $\mathbf P^h_{(r,v)}$ does not hit the boundary with probability one.
        Moreover, setting $b=\left(\frac{Lh}{h}\right)_+$ and $\kappa=\left(\frac{Lh}{h}\right)_-$, where $L$ was defined in \eqref{NRW-gen}, the function $b$ is bounded on $D\times V$ and $\kappa$ is locally bounded. Let $\mathbf{P}^\kappa_{r,\upsilon}$ be the law of a process with law $\mathbf{P}^h_{r,\upsilon}$ with additional soft killing at rate $\kappa$, then we have
	\begin{align}
	\mathbf{E}_{(r,\upsilon)}\left[f(R_t,\Upsilon_t)\, \mathbf{1}_{(t<\tau^{D})}\right]=h(r,\upsilon)\, \mathbf{E}^\kappa_{(r,\upsilon)}
	\left[\exp\left(\int_0^t  b(R_s,\Upsilon_s)\,{\rm d} s\right)\frac{f(R_t,\Upsilon_t)}{h(R_{t}, \Upsilon_{t})} \mathbf{1}_{(t<\tau_\kappa)}\right],
	\label{hCOM4bis}
	\end{align}
	where $\tau_\kappa$ denotes the soft killing time of the process with law $\mathbf{P}^\kappa_{r,\upsilon}$. 
\end{lem}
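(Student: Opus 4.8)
The plan is to derive all three assertions from an explicit computation of $Lh/h$, where $Lg=\upsilon\cdot\nabla_r g+\texttt{J}g$ denotes the (extended) generator of the NRW under $\mathbf P$, with $\texttt{J}$ as in \eqref{NRWscatter} and $\int_V\pi(r,\upsilon,\cdot)\,\mathrm{d}\upsilon'=1$ by \eqref{alpha}--\eqref{pi}. Writing $\kappa_{r,\upsilon}$ for the exit time of the deterministic flow $s\mapsto r+\upsilon s$ from $D$ (so that $h=\phi(\kappa_{\cdot,\cdot})$), the directional derivative along the flow is $\upsilon\cdot\nabla_r h(r,\upsilon)=-\phi'(\kappa_{r,\upsilon})$, since the deterministic exit time decreases at unit speed. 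Hence
\[ \frac{Lh}{h}(r,\upsilon)=\frac{-\phi'(\kappa_{r,\upsilon})}{h(r,\upsilon)}+\frac{\alpha(r,\upsilon)}{h(r,\upsilon)}\int_V h(r,\upsilon')\pi(r,\upsilon,\upsilon')\,\mathrm{d}\upsilon'-\alpha(r,\upsilon). \]
Near $\partial D$ one has $\phi'=1$ and $h=\kappa_{r,\upsilon}\to 0$, while $\int_V h(r,\upsilon')\pi\,\mathrm{d}\upsilon'\le\delta$; the choice $\delta=1/(2\|\alpha\|_\infty)$ then gives $-\phi'+\alpha\int h\pi\le-1+\alpha\delta\le-\tfrac12<0$, so $Lh/h\to-\infty$ at the boundary. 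Consequently $b=(Lh/h)_+$ vanishes in a vicinity of $\partial D$ and is bounded on any region where $h$ is bounded below, hence globally bounded; whereas $\kappa=(Lh/h)_-$ diverges at $\partial D$ but is bounded on compact subsets of $D\times V$, i.e.\ locally bounded. This is exactly where the specific value of $\delta$ is used.

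For the statement that $\mathbf P^h$ never reaches $\partial D$, the intuition is that the $\mathbf P^h$-scattering rate $\lambda_h=\alpha h^{-1}\int_V h(r,\upsilon')\pi\,\mathrm{d}\upsilon'$ behaves like $c/\kappa_{r,\upsilon}$ as the particle approaches the boundary — the integral staying bounded below by (A2) together with a geometric argument furnishing a positive-volume set of inward velocities along which $h$ is bounded away from $0$ — so scattering is forced before the flow can exit. I would turn this into a Foster--Lyapunov argument for the generator $L_h=\upsilon\cdot\nabla_r+\texttt{J}_h$ of $\mathbf P^h$ (noting $\texttt{J}_h g=h^{-1}\texttt{J}(gh)-(\texttt{J}h/h)g$), using $W=-\log h$, which is bounded below and tends to $+\infty$ precisely at $\partial D$. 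A computation analogous to the one above shows that near $\partial D$ the scattering term $\texttt{J}_h W=\alpha h^{-1}\int_V h(r,\upsilon')\log(h(r,\upsilon)/h(r,\upsilon'))\pi\,\mathrm{d}\upsilon'\sim(\alpha c/\kappa_{r,\upsilon})\log\kappa_{r,\upsilon}\to-\infty$ dominates the transport term $\phi'/h\sim 1/\kappa_{r,\upsilon}$, so $L_hW$ is bounded above on $D\times V$. Dynkin's formula and Grönwall (exactly as in the proof of Proposition~\ref{prop:nonexplPopModel}) then yield $n\,\mathbf P^h_{(r,\upsilon)}(\theta_n\le t)\le W(r,\upsilon)+Ct$ for $\theta_n=\inf\{t:W(R_t,\Upsilon_t)\ge n\}$; since $W(R_t,\Upsilon_t)\to\infty$ continuously as $t\uparrow\tau^D$ (scatters do not move the position, so the boundary can only be reached through the flow), $\theta_n\uparrow\tau^D$ and hence $\mathbf P^h_{(r,\upsilon)}(\tau^D\le t)=0$ for every $t$. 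Verifying that $L_hW$ is uniformly bounded above up to $\partial D$ is the main obstacle.

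For the Feynman--Kac identity I would invert the change of measure \eqref{hCOM}. Because $h(R_s,\Upsilon_s)$ moves continuously between scatter times and jumps only at the $T_i$ (where the velocity changes), decomposing the total variation of $\log h$ over $[0,t]$ into continuous and jump parts gives
\[ \int_0^t\frac{\upsilon\cdot\nabla_r h(R_s,\Upsilon_s)}{h(R_s,\Upsilon_s)}\,\mathrm{d}s=\log\frac{h(R_t,\Upsilon_t)}{h(r,\upsilon)}-\sum_{i=1}^{N_t}\log\frac{h(R_{T_i},\Upsilon_{T_i})}{h(R_{T_i},\Upsilon_{T_{i-1}})}. \]
Substituting into \eqref{hCOM} and using $Lh=\upsilon\cdot\nabla_r h+\texttt{J}h$ collapses the density to $M_t^{-1}=\frac{h(r,\upsilon)}{h(R_t,\Upsilon_t)}\exp(\int_0^t (Lh/h)(R_s,\Upsilon_s)\,\mathrm{d}s)$ on $\{t<\tau^D\}$. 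Applying the change of measure to $\Phi=f(R_t,\Upsilon_t)\mathbf{1}_{t<\tau^D}$, and using $\mathbf P^h(\tau^D=\infty)=1$ to drop the indicator, gives
\[ \mathbf E_{(r,\upsilon)}\!\left[f(R_t,\Upsilon_t)\mathbf{1}_{t<\tau^D}\right]=h(r,\upsilon)\,\mathbf E^h_{(r,\upsilon)}\!\left[\frac{f(R_t,\Upsilon_t)}{h(R_t,\Upsilon_t)}\exp\!\Big(\int_0^t (b-\kappa)(R_s,\Upsilon_s)\,\mathrm{d}s\Big)\right], \]
since $Lh/h=b-\kappa$. Finally, $\exp(-\int_0^t\kappa(R_s,\Upsilon_s)\,\mathrm{d}s)$ is precisely the conditional survival probability of an independent soft kill at rate $\kappa$: by definition of $\mathbf P^\kappa$ one has $\mathbf E^h[\Psi\,e^{-\int_0^t\kappa}]=\mathbf E^\kappa[\Psi\,\mathbf{1}_{t<\tau_\kappa}]$ for path functionals $\Psi$, and taking $\Psi=h(R_t,\Upsilon_t)^{-1}f(R_t,\Upsilon_t)\exp(\int_0^t b)$ yields \eqref{hCOM4bis}.
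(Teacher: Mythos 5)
Your treatment of the boundedness claims and of the Feynman--Kac identity is essentially the paper's: the computation of $Lh$ near $\partial D$ (giving $Lh\le -1+\|\alpha\|_\infty\delta=-1/2$, hence $b$ bounded and $\kappa$ locally bounded) is the paper's \eqref{eq:Lh}, and your inversion of the change of measure followed by the splitting $Lh/h=b-\kappa$ and the conversion of $\exp\left(-\int_0^t\kappa\right)$ into the survival indicator $\mathbf 1_{(t<\tau_\kappa)}$ under $\mathbf P^\kappa$ reproduces \eqref{hCOM3}--\eqref{hCOM4}. Where you genuinely diverge is that you prove, rather than cite, the two facts the paper imports from \cite{cox2020monte}: (i) the collapsed form \eqref{hCOM2} of the Radon--Nikodym density, which you obtain by decomposing $\log h$ along PDMP trajectories into its continuous (transport) and jump (scatter) parts --- this is exactly what Remark~7.1 of \cite{cox2020monte} supplies, and your derivation is correct; and (ii) the statement that the $h$-process never reaches $\partial D$, for which the paper verifies the elementary condition \eqref{inf} (trivial here, since $h$ coincides with the exit time near $\partial D$, making the ratio equal to $|\upsilon|\ge v_{min}$) and invokes Theorem~7.1 of \cite{cox2020monte}, whereas you construct a self-contained Foster--Lyapunov argument with $W=-\log h$. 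Your Lyapunov computation is sound: $L_hW=\phi'/h+\alpha h^{-1}\int_V h(r,\upsilon')\log\left(h(r,\upsilon)/h(r,\upsilon')\right)\pi\,\mathrm d\upsilon'$, and the negative scatter term of order $h^{-1}\log h$ dominates the positive transport term of order $h^{-1}$. But note that this domination hinges on the uniform lower bound $\inf_{r\in D}\int_V h(r,\upsilon')\pi(r,\upsilon,\upsilon')\,\mathrm d\upsilon'>0$, which you flag as the main obstacle but do not establish; it does not follow from (A1)--(A2) alone and requires the standing geometric hypotheses on $D$ (open, bounded, smooth boundary, which yields a uniform interior ball and hence a positive-measure cone of directions with exit time bounded below). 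This is precisely the work that the citation to Theorem~7.1 of \cite{cox2020monte} lets the paper avoid; your route buys self-containedness at the cost of supplying that geometric lemma, plus minor care in applying Dynkin's formula at the stopping times $\theta_n$ (a scatter can overshoot the level $n$, but the expectation stays finite since $\int_V h'|\log h'|\pi\,\mathrm d\upsilon'<\infty$).
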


\begin{proof}
	We have
\begin{align}
&\psi_t[g](r,\upsilon)\notag\\
&= \mathbf{E}^h_{(r,\upsilon)}\left[
\exp\left(\int_0^t  \frac{{\emph{\texttt J}}h (R_s,\Upsilon_s)}{h (R_s,\Upsilon_s)} +\beta(R_s,\Upsilon_s){\rm d} s\right)\prod_{i=1}^{N_t}\frac{h(R_{T_i}, \Upsilon_{T_{i-1}})}{h(R_{T_{i}}, \Upsilon_{T_{i}})}
g(R_t, \Upsilon_t)
\mathbf{1}_{(t<\tau^{D})}\right]
\label{hsemigp}
\end{align}
Note that
\begin{equation}
\inf_{r\in D,\upsilon\in V}  \lim_{s\to\kappa_{r,\upsilon}^D}\frac{|\upsilon| (\kappa_{r,\upsilon}^D-s)}{h (r+\upsilon s,\upsilon)} >0,
\label{inf}
\end{equation}
where
$\notag
\kappa_{r,\upsilon}^{D} := \inf\{t>0 : r+\upsilon t\not\in D\}$. Hence, Theorem 7.1 in \cite{cox2020monte} implies that, under $\mathbf{P}^h_{(r,\upsilon)}$, $r\in D, \upsilon\in V$, $(R,\Upsilon)$ does not hit the boundary with probability one.
In particular, the conditions of Remark~7.1 of \cite{cox2020monte} are satisfied and hence
\begin{equation}
\left.\frac{{\rm d} \mathbf{P}^h_{(r,\upsilon)}}{{\rm d} \mathbf{P}_{(r,\upsilon)}}\right|_{\sigma((R_s,\Upsilon_s), s\leq t)}= 
\exp\left(-\int_0^t  \frac{Lh (R_s,\Upsilon_s)}{h (R_s,\Upsilon_s)}{\rm d} s\right)\frac{h(R_{t}, \Upsilon_{t})}{h(r,\upsilon)} \, \mathbf{1}_{(t<\tau^{D})},
\label{hCOM2}
\end{equation}
where we recall that $L$ was defined in \eqref{NRW-gen}.
Finally, we obtain, for all measurable functions $f:D\times V\to[0,+\infty)$,
\begin{align}
\mathbf{E}_{(r,\upsilon)}\left[f(R_t,\Upsilon_t)\, \mathbf{1}_{(t<\tau^{D})}\right]=h(r,\upsilon)\, \mathbf{E}^h_{(r,\upsilon)}
\left[\exp\left(\int_0^t  \frac{Lh (R_s,\Upsilon_s)}{h (R_s,\Upsilon_s)}{\rm d} s\right)\frac{f(R_t,\Upsilon_t)}{h(R_{t}, \Upsilon_{t})} \right]
\label{hCOM3}
\end{align}
where have used the fact that $\mathbf{1}_{(t<\tau^{D})}=1$, $\mathbf{P}^h_{(r,\upsilon)}$-almost surely, deduced from~\eqref{inf}. Now, introducing $b=\left(\frac{Lh}{h}\right)_+$ and $\kappa=\left(\frac{Lh}{h}\right)_-$, and considering the law $\mathbf{P}^\kappa_{r,\upsilon}$ of a process with law $\mathbf{P}^h_{r,\upsilon}$ with soft killing at rate $\kappa$, we deduce that
\begin{align}
\mathbf{E}_{(r,\upsilon)}\left[f(R_t,\Upsilon_t)\, \mathbf{1}_{(t<\tau^{D})}\right]=h(r,\upsilon)\, \mathbf{E}^\kappa_{(r,\upsilon)}
\left[\exp\left(\int_0^t  b(R_s,\Upsilon_s)\,{\rm d} s\right)\frac{f(R_t,\Upsilon_t)}{h(R_{t}, \Upsilon_{t})} \mathbf{1}_{(t<\tau_\kappa)}\right],
\label{hCOM4}
\end{align}
where $\tau_\kappa$ denotes the soft killing time of the process (with law $\mathbf{P}^h_{r,\upsilon}$ with soft killing at rate $\kappa$).

Now, we observe that, in a vicinity of $\partial D$,
\begin{align}
Lh(r,\upsilon)&=-1+\alpha(r,\upsilon)\int_{V} (h(r,\upsilon')-h(r,\upsilon))\,\pi(r,\upsilon,\upsilon')\,\D \upsilon' \label{eq:Lh}\\
&\leq -1+\|\alpha\|_\infty \delta=-1/2 \nonumber
\end{align}
and hence that
\[
\sup_{(r,\upsilon)\in D\times V} \frac{Lh(r,\upsilon)}{h(r,\upsilon)}<+\infty.
\]
This implies that $b$ is bounded. The fact that $\kappa$ is locally bounded is an immediate consequence of the regularity of $h$.
\end{proof}

Our aim is now to apply the results of Section~\ref{sec:mainresult} to the Feynman-Kac expression~\eqref{hCOM4bis} in order to obtain information about $\mathbf P$. Assumption~1 is clearly satisfied by a process with law $\mathbf P^\kappa_{(r,\upsilon)}$, and it only remains to check that Assumption~2 holds true in order to apply Theorem~\ref{thm:main}. This is the purpose of the following result. In what follows, we refer to a bounded BBMMI as any BBMMI whose selection mechanism imposed by the parameters $p^i$ results in a size constrained process, i.e. there is a constant $N_{max}$ such that $\bar N_t \le N_{max}$ for all $t \ge 0$.

\begin{prop}
    \label{prop:nonexplosion}
    Under the assumptions (A1), (A2), (B1) and (B2), any {\color{black} bounded} BBMMI driven by $\mathbf P^\kappa_{(r,\upsilon)}$ satisfies Assumption~2. 
\end{prop}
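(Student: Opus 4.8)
The plan is to follow the Foster--Lyapunov strategy already used for Proposition~\ref{prop:nonexplPopModel}: exhibit a function $\bar V$ on the configuration space that blows up precisely when a particle approaches $\partial D$ (the only place where the event rate can diverge, since by the preceding lemma $b$ is bounded, $\kappa$ is locally bounded and a process with law $\mathbf P^\kappa$ does not hit the boundary), verify a bound of the form $\bar L\bar V\le C\bar V$ for the extended generator $\bar L$ of the bounded BBMMI, and then conclude by Dynkin's formula and Gr\"onwall's inequality, exactly as in the proof of Proposition~\ref{prop:nonexplPopModel}. First I would reduce the problem: since the process is a \emph{bounded} BBMMI, $\bar N_t\le N_{max}$, so the total branching rate never exceeds $N_{max}\|b\|_\infty$ and branching events alone cannot accumulate. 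Thus Assumption~2 is equivalent to the statement that the soft-killing events do not accumulate, i.e.\ that $\int_0^t\sum_{i\in\bar S_s}\kappa(X^i_s)\,\D s<+\infty$ almost surely for every $t$.

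For the Lyapunov function I would take $\bar V=1+\sum_{i\in\bar S}W(X^i)$ with $W=h^{-\theta}$ for a small $\theta\in(0,1)$; since $h\le\delta$ and $h\to0$ only at $\partial D$, one has $\bar V\ge1$ and $\bar V\to\infty$ exactly as some particle approaches the boundary. The key computation is the drift of $W$ under the motion--branching--killing part of the dynamics. Because $\mathbf P^h$ is the $h$-transform of the neutron random walk, its generator $L^h$ satisfies $L^hg=\tfrac1hL(hg)-\tfrac{Lh}{h}g$, and since $Lh/h=b-\kappa$ this gives the exact identity
\[
L^hW-\kappa W+bW=\frac1hL(hW)=\frac1hL(h^{1-\theta}).
\]
In a vicinity of $\partial D$ one has $b=0$, $\mathtt T h=-1$ along the flow, and the scattering part of $L$ is bounded, so $\tfrac1hL(h^{1-\theta})\sim-(1-\theta)h^{-1-\theta}$, a strong negative drift diverging to $-\infty$ at the boundary; away from $\partial D$ everything is bounded. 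Hence the motion, branching and killing contributions to $\bar L\bar V$ are uniformly bounded above and strongly negative near $\partial D$.

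The main obstacle is the resampling term. A soft-killing event at particle $i$ occurs at the unbounded rate $\kappa(X^i)\sim h(X^i)^{-1}$ and, with probability $p^i$, duplicates a uniformly chosen particle $j$, adding $W(X^j)$ to $\bar V$; the resulting contribution $\sum_i\kappa(X^i)p^i\tfrac1{\bar N-1}\sum_{j\ne i}W(X^j)$ is of the same diverging order as the negative drift, and the whole difficulty is to show it is dominated by it. Here I would exploit that $\kappa$ and $W$ are \emph{comonotone} — both are large precisely for the particles closest to $\partial D$ — to bound the resampling term by $\sum_i\kappa(X^i)W(X^i)$ through a Chebyshev/FKG-type inequality, in the same spirit as the FKG step in Proposition~\ref{prop:nonexplPopModel}. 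The surplus negativity coming from the exponent $1-\theta<1$ (equivalently, from the fact that the scattering rate of $\mathbf P^h$ itself blows up at the same order $h^{-1}$ as $\kappa$, so that excursions toward the boundary are self-limiting) should then leave a net bound $\bar L\bar V\le C\bar V$. Once this is in hand, localising at the stopping times $\theta_n=\inf\{t\ge0:\min_{i\in\bar S_t}h(X^i_t)\le 1/n\}$, applying Dynkin's formula and Gr\"onwall's inequality yields $\mathbb P(\tau_\infty\le t)\le \bar V_0\,e^{Ct}/\inf_{U_n}\bar V$, where $U_n$ is the set reached at $\theta_n$ and $\inf_{U_n}\bar V\ge n^{\theta}\to\infty$; hence $\tau_\infty=+\infty$ and Assumption~2 holds.

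I expect the comonotonicity/domination estimate to be the technical heart of the argument. The delicate point is that $\kappa=-(Lh/h)_-$ is not exactly a function of $h$ (its $O(1)$ prefactor depends on $(r,\upsilon)$ through the scattering kernel), so the required inequality has to be made robust to this perturbation and, above all, \emph{uniform} over configurations in which several particles approach $\partial D$ simultaneously; this is precisely the regime in which a naive pairwise estimate of the resampling term is too lossy, and where the precise matching between the killing rate and the $h$-transformed scattering rate must be used.
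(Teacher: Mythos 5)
The paper's proof is entirely different from, and far more elementary than, your proposal: it never computes a generator or builds a Lyapunov function. Instead, it observes that under (B1)--(B2) the $h$-transformed scattering rate into a ``good'' cone of directions $V_r$ (leading into the interior $\{d(r,\partial D)\ge\varepsilon\}$) is bounded below by $\underline s/h(r,\upsilon)$, while the \emph{total} event rate (scattering, branching, soft killing) is bounded above by $\bar c/h(r,\upsilon)$; hence \emph{every} event is, with probability at least $\underline s/\bar c$ independently of the past, a scattering into $V_r$, after which the particle has probability at least $\underline p$ of experiencing no event for a fixed time $T$. The number of events per particle in $[0,T]$ is therefore stochastically dominated by a geometric random variable, and Assumption~2 follows by iterating over time blocks. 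Your reduction to soft-killing events and your drift identity $L^hW+(b-\kappa)W=\frac1h L(h^{1-\theta})$ (with the correct asymptotics $-(1-\theta)h^{-1-\theta}+O(h^{-1})$ near $\partial D$) are both sound, so the Lyapunov route is viable in principle; but as written your proof has a genuine gap exactly at the step you yourself defer.

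The gap is this: the FKG/almost-comonotonicity machinery of Proposition~\ref{prop:nonexplPopModel}, which you invoke for the resampling term, cannot be applied here even ``robustly''. That argument requires $\kappa=\kappa'+\kappa''$ with $\kappa'$ comonotone with $W$ and $\kappa''$ \emph{bounded}; but near $\partial D$ one only has $\frac{1}{2h}\le\kappa\le\frac{3}{2h}$, so the deviation of $\kappa$ from any function of $h$ is itself of order $h^{-1}$, i.e.\ unbounded, and the ``bounded perturbation'' step collapses. Moreover a naive Young-type bound $\kappa_iW_j\le \frac{1}{1+\theta}h_i^{-1-\theta}+\frac{\theta}{1+\theta}h_j^{-1-\theta}$ gives the resampling term a coefficient $\frac32$, which exceeds the available negative drift $(1-\theta)$, so the estimate genuinely fails without a finer argument. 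What actually saves the approach is the \emph{two-sided} bound above: writing the killing-plus-resampling term as $-\frac{1}{\bar N-1}\sum_{i<j}(\kappa_i-\kappa_j)(W_i-W_j)$, a discordant pair forces $h_i$ and $h_j$ to be comparable within a factor $3$, whence its magnitude is at most $\frac{3}{2}(3^\theta-1)\,h_j^{-1-\theta}=O(\theta)\,h_j^{-1-\theta}$, which is absorbed by the transport drift once $\theta$ is small; none of this appears in your text. A secondary, also real, gap: in Proposition~\ref{prop:nonexplPopModel} Dynkin's formula is justified because the set reachable in one jump from $\{\bar V\le n\}$ is bounded, but here a scattering event can send $W=h^{-\theta}$ to arbitrarily large values, so the localisation $\theta_n$ does not bound $\bar V(\XX_{t\wedge\theta_n})$, and one must separately argue integrability of the jumps (and that $\bar L$ is indeed the extended generator of a PDMP-driven particle system, a point the paper itself flags as delicate). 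The paper's scheme sidesteps all of these issues.
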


\begin{proof}
We denote by $\sigma_1<\sigma_2<...$ the sequence of times at which events occur for a given particle in the system (each time is either a  scattering, a branching or a soft killing). Note that each time $\sigma_n$ is well defined since,  almost surely, the number of event occurring before a time $T$ goes to infinity when $T\to+\infty$. Since one can use the strong Markov property at the birth time of the given particle, we assume without loss of generality that the particle is already alive at time $0$.

From the expression of the scattering operator ${\emph{\texttt J}}_h$ in~\eqref{hNRWscatter}, one observes that the scattering rate toward a direction in a set $V'\subset V$ is given by $\int_{V'}\alpha(r,\upsilon)\frac{h(r,\upsilon')}{h(r,\upsilon)}\pi(r,\upsilon,\upsilon')\mathrm d\upsilon'$. But, under the assumptions (B1--2), there exists $\varepsilon>0$, $\beta>0$ and $\eta>0$ such that, for any point $r\in D$, there exists $V_r$ with $r+[0,\beta] V_r\subset D$ and $\text{Leb}_d((r+[0,\beta] V_r)\cap \{r'\in D,\ d(r',\partial D)\geq \varepsilon\})>\eta$, where $\text{Leb}_d$ denotes the $d$-dimensional Lebesgue measure. In particular, using the fact that $h(r,\upsilon')$ is lower bounded on $V_r$ (uniformly over $r\in D$). We deduce that there exists a constant $\underline s>0$ such that the scattering rate toward a direction in $ V_r$ is lower bounded by 
$\underline s /h(r,\upsilon)$, uniformly in $(r,\upsilon)\in D\times V$.

Using \eqref{eq:Lh} to bound $Lh$ from below, there exists a constant $\bar c$ such that the total rate at which a particle undergoes an event (either a  scattering, a branching or a (soft) killing) is upper bounded by $\bar c /h(r,\upsilon)$, uniformly in $(r,\upsilon)\in D\times V$. Hence each event has a probability greater than $\underline s/\bar c$ to be a scattering event toward a direction in $V_r$, independently of the past of the process, and in particular independently of the time event $\sigma_n$: formally, for all $n\geq 0$,
\[
\mathbf P^\kappa\left(\sigma_n\text{ is a scattering event and }(R_{\sigma_n},\Upsilon_{\sigma_n})\in \{R_{\sigma_n-}\}\times V_{R_{\sigma_n-}}\mid \sigma_n, R_{\sigma_n-},\Upsilon_{\sigma_n-}\right)\geq \underline s\,/\bar c>0,
\]
where $(R,\Upsilon)$ denotes the position and direction of the given particle. 

Now, using the fact that the killing rate and the scattering rate are uniformly bounded in $\{r'\in D,\ d(r',\partial D)\geq \varepsilon\}$ and the fact that the total branching rate of the system is uniformly bounded,  we deduce that there exists a constant $\underline p>0$ and a time $T>0$ such that, for all $r\in D$ and $\upsilon\in V_r$, $\PP(\sigma_1 \geq  T)\geq \underline{p}$. 

Hence, we deduce from the strong Markov property that
\begin{align*}
\mathbf P^\kappa(\sigma_{n+1}<T) &\leq  \mathbf E^\kappa\left[\ind_{\sigma_n<T}\, \mathbf P^\kappa_{X_{\sigma_n}}(\sigma_1<T)\right]\\
                          &\leq  \mathbf E^\kappa\left[\ind_{\sigma_n<T}\,\left(1-\ind_{\Upsilon_{\sigma_n}\in V_{R_{\sigma_n}}}\mathbf P^\kappa_{X_{\sigma_n}}(\sigma_1<T)\right)\right]\\
                          &\leq \mathbf E^\kappa\left[\ind_{\sigma_n<T}\,\left(1-\ind_{\Upsilon_{\sigma_n}\in V_{R_{\sigma_n}}}\underline p\right)\right]\\
                          &\leq \mathbf P^\kappa(\sigma_n<T)(1-\underline s\,/\bar c\,\underline p).
\end{align*}
This shows that the number of events occurring before time $T$ is stochastically dominated by a geometric random variable with parameter $\underline s\,/\bar c\,\underline p>0$, so that it is finite almost surely and that its expectation is bounded by a constant $\bar a$. Since this is true for all particles, we deduce that total number of events has an expectation bounded by $N_{max}\bar a$. Finally, observing that this bound does not depend on the initial position, we deduce using the Markov property at times $T,2T,\ldots$ that the number of event occurring before a time horizon $nT$ is bounded by $kN_{max}\bar a$, and hence that it is finite almost surely.

This concludes the proof of Proposition~\ref{prop:nonexplosion}.
\end{proof}

\subsection{Numerical properties of the constrained branching process}\label{sec:numerics}

The aim of this section is to discuss some of the numerical properties of the $N_{min}$--$N_{max}$ process defined in Remark~\ref{rem-NminNmax} of Section~\ref{sec:descr}, which we restate here for convenience. For two integers $N_{min}\neq 1$ and $N_{max}$, with $0\leq N_{min}\leq N_{max}$, we define the $N_{min}$--$N_{max}$ process as the particle system defined in Section~\ref{sec:descr} with the parameter choices
\[
  p^{i_0}(x_i,\,i\in s)=\mathbf{1}_{\{|s|=N_{min}\}} \text{  and  } q^{i_0}(x_i,\,i\in s)=\mathbf{1}_{\{|s|=N_{max}\}}, \quad i_0 \in s. 
\]  
In Section~\ref{sec:compFVNminNmax1}, we first make comparisons between this algorithm and the fixed size Moran type algorithm studied in \cite{DelMoral2004,DelMoral2013}. 
We will often use the abbreviation FV IPS to refer to the latter, in reference to the name {\it Fleming Viot interacting particle system}, which has been used in several instances in the literature.
In Section~\ref{sec:filtering}, we introduce a filtering method to deal with numerical discrepancies occurring in situations where $\Pi_T^A\Pi^B_T$ has high variance.

\smallskip

\subsubsection{Comparison with the FV IPS}
\label{sec:compFVNminNmax1}

We recall that the FV IPS simulates a set of $N$ sub-Markov processes, until the first time one of the particles dies, at which point a resampling event occurs by uniformly selecting one of the remaining $N-1$ particles and duplicating it.  Note that the FV IPS is a particular instance of the $N_{min}$--$N_{max}$ IPS, with a null branching rate and $N_{min}=N_{max}=N$. In particular, Theorem~\ref{thm:main}  applies to the FV IPS (see~\cite{Villemonais2014} for an analogous result in the FV case).

\smallskip

In some situations, it is possible to use the FV IPS to approximate quantities associated with branching processes. Indeed, it is well known that for a branching process $X = (X_t)_{t \ge 0}$, under fairly weak assumptions, the following many-to-one formula holds
\[
\psi_t[f](x) \coloneqq \mathbb{E}_{\delta_x}[\langle f, X_t\rangle] = \mathbf{E}_x\left[{\rm e}^{\int_0^t \beta(Y_s) {\rm d} s}f(Y_t) \right],
\]
where $\beta$ is determined by the branching rate and mean offspring of $X$, and the law of $Y$ is uniform amongst all paths of $X$. 
%
%
%
%
%
%
%
%
In the case where $\beta$ is uniformly bounded above, one may compensate the right-hand side above by its maximum, $\bar\beta$, to introduce a penalised process: $\psi_t^\mathtt{k}[f](x) \coloneqq {\rm e}^{-\bar\beta t}\psi_t[f](x)$. The penalisation has the effect of killing $Y$ at an additional rate $\bar\beta - \beta$, thus yielding a sub-Markov process with semigroup $\psi_t^\mathtt{k}$. This then allows one to use the FV IPS to obtain estimates of this semigroup and, in some situations, of its associated quasi-stationary distribution, for example. In general, the use of the FV IPS for the approximation of quantities related to some non-conservative semigroups is widespread, see for instance~\cite{BurdzyHolystEtAl1996,DelMoral2004,Rousset2006,GrigorescuKang2004,Villemonais2011,GroismanJonckheere2013,CerouEtAl2019,CerouDelyonEtAl2020,CerouGuyaderEtAl2020}. 

In this section, we show on a simple example that the $N_{min}$--$N_{max}$ algorithm allows one to overcome some of the limitations of the FV IPS. For simplicity, we only consider the situation where $N_{min}=N_{max}$. In this situation, the $N_{min}$--$N_{max}$ algorithm is the classical Moran interacting particle system, and identical strategies have already been discussed in the literature (see e.g.~\cite{CloezCorujo2021,DelMoral2004,Rousset2006}). Our aim is to illustrate some numerical properties of these strategies for a toy model.

\paragraph{A simple branching birth and death process.}
We consider a birth and death process on $E=\{1, \dots, M\}$ for some parameter $M \in \mathbb N$. For a particle occupying state $x$, one of the following things may occur:
\begin{itemize}
\item the particle jumps with rate $x^2$, to state $1\vee(x - 1)$ with probability $\frac{x}{x+1}$  and to state  $(x + 1)\wedge M$ with probability $\frac1{x+1}$,
\item at rate $b(x) = x$, a new particle is produced at the same site, which will continue independently the same behaviour as the original particle. 
\end{itemize}

In this situation where the state space is finite and irreducible, it is well known that the semigroup $(\psi_t)_{t\geq 0}$ admits a limiting distribution up to an exponential rescaling (see for instance~\cite{DarrochSeneta1967}): there exist a constant $\lambda_M\in \mathbb R$, a positive function  $g_M:\{1,\ldots,M\}\to (0,+\infty)$ and a probability measure $\nu_M$ on $\{1,\ldots,M\}$ such that
\begin{align*}
\sup_{f,x}\left|e^{-\lambda_M t} \psi_t[f](x)-g_M(x)\nu_M(f)\right|\leq C_Me^{-\gamma_M t}, \qquad t \ge 0,
\end{align*}
for some positive constants $C_M,\gamma_M$, where the supremum is taken over all functions $f:\{1,\ldots,M\} \to (0, \infty),\ \|f\|_\infty\leq 1$ and all $x\in \{1,\ldots,M\}$.

The associated birth and death process with killing evolves on $\{1,\ldots,M\}$ with the same jump rates, without branching, and with the additional killing rate $\kappa(x)=M-x$. Denoting as above by $\psi^\mathtt{k}$ the associated sub-Markov semigroup, we have $\psi^\mathtt{k}_t=e^{-Mt}\psi_t$ and hence
\begin{align*}
\sup_{f,x}\left|e^{-(\lambda_M-M) t} \psi^\mathtt{k}_t[f](x)-g_M(x)\nu_M(f)\right|\leq C_Me^{-\gamma_M t},
\end{align*}
 where the supremum is taken over all functions $f:\{1,\ldots,M\} \to (0, \infty),\ \|f\|_\infty\leq 1$ and all $x\in \{1,\ldots,M\}$.

\paragraph{Theoretical convergence of the $N_{min}$--$N_{max}$ and of the FV IPSs.}
In what follows, $m^M_T$ refers to the empirical measure of the $N_{min}$--$N_{max}$ particle system at time $T$, associated to the above branching birth and death process, and $\mu^M_T$ refers to the empirical measure of the FV IPS with $N:=N_{min}=N_{max}$ particles at time $T$ associated to the above killed birth and death process.

In particular, it is well known (se e.g. Theorem~3.27 in~\cite{del2000branching}) that, in the situation where $N_{min}=N_{max}=N$,
 \[
\left\|\frac{m^M_T(f)}{N}-\nu_M(f)\right\|_2\leq \frac{C_{FV} \|f\|_\infty}{N^{\alpha/2}}+C_X\,e^{-\gamma_X T}\,\|f\|_\infty.
\]   
for some $\alpha>0$ and $C_{FV}>0$. Using the fact that the $N_{min}$--$N_{max}$ process is ergodic, we deduce that its normalised empirical stationary distribution $\mathcal X^M_{N}$ (the normalised empirical distribution of a random vector of particles distributed according to the stationary distribution of the $N_{min}$--$N_{max}$ process) satisfies
\[
\mathcal X^M_{N}(f)\xrightarrow[N\to+\infty]{L_2}\nu_M(f),
\]
for all function $f:\{1,\ldots,M\}\to \nu_M$.
Similarly, the normalised empirical stationary distribution $\mathcal Y^M_N$ of the FV particle system satisfies
\[
\mathcal Y^M_{N}(f)\xrightarrow[N\to+\infty]{L_2}\nu_M(f).
\]
These theoretical convergence results ensure that both algorithms can be used to approximate the limiting distribution $\nu_M$. It remains to compare their performances via numerical simulations.

\paragraph{Numerical comparison of the $N_{min}$--$N_{max}$ and the FV IPSs.}
We now consider the problem of numerically approximating the mean of $\nu_M$, that is $\nu_M(f_M)$ with $f_M(x)=x$ for all $x\in \{1,\ldots,M\}$. The estimator is $\hat\theta_N=\mathcal X^M_{N}(f_M)$  (resp.  $\hat \theta_N=\mathcal Y^M_{N}(f_M)$) for the $N_{min}$--$N_{max}$ (resp. FV) IPS, and we compare three important metrics determining the performance of the algorithms:
\begin{itemize}
	\item the bias of the estimator, $|\mathbb E(\hat\theta_N)-\nu_M(f_M)|$,
	\item the standard deviation of the estimator, $\text{Std}(\hat\theta_N)$,
	\item the number of interaction events, $(A_T+B_T)/T$, which corresponds to the number $B_T$ of selection events in the $N_{min}$--$N_{max}$ algorithm and the number $A_T$ of resampling events in FV IPS, per unit of time.
\end{itemize}
The last metric $(A_T+B_T)/T$ is particularly important since it entails that the total number of interaction events grows (at least) linearly in time. 

For $N=10$  and then $N=100$, we let the upper bound  of the state space $M$ vary in $\{10,100,1000,+\infty\}$, and present the results in Table~\ref{tab:N10} for $N=10$, and in Table~\ref{tab:N100} for $N=100$. For each metric, a lower number indicates that the algorithm performs better.

\begin{table}
\begin{tabular}{ |c|c|C{3.5cm}|C{3.5cm}|C{3.5cm}| } 
 \hline
 \multicolumn{2}{|c|}{$N=10$\phantom{$\Big)$}}   & $|\mathbb E(\hat\theta_N)-\nu_M(f_M)|$ & $\text{Std}(\hat\theta_N)$ & $(A_T+B_T)/T$\\ 
  \hline
 \multirow{2}{*}{$M=10$}& $N_{min}$--$N_{max}$ &\phantom{$\Big)$} $0.08$ & $0.30$ & $14.0$ \\
\cline{2-5}
                        &FV  & \phantom{$\Big)$} $0.10$ & $0.41$ & $87.2$ \\
 \hline
 \multirow{2}{*}{$M=100$}& $N_{min}$--$N_{max}$ & \phantom{$\Big)$} $0.08$ & $0.30$ & $14.0$ \\
\cline{2-5}
&FV  & \phantom{$\Big)$} $0.20$ & $0.51$ & $988$ \\
 \hline
 \multirow{2}{*}{$M=1000$}& $N_{min}$--$N_{max}$ & \phantom{$\Big)$} $0.08$ & $0.30$ & $14.0$ \\
\cline{2-5}
&FV  & \phantom{$\Big)$} $0.22$ & $0.53$ & $9989$ \\
 \hline
 \multirow{2}{*}{$M=+\infty$}& $N_{min}$--$N_{max}$ & \phantom{$\Big)$} $0.08$ & $0.30$ & $14.0$  \\
\cline{2-5}
&FV  & \phantom{$\Big)$} $*$ & $*$ & $*$\\
 \hline 
\end{tabular}
\caption{\label{tab:N10} For each value of $M\in\{10,100,1000,+\infty\}$, we display the bias of the estimator $\hat \theta_N$, its standard deviation, and the number of events,  for the $N_{min}$--$N_{max}$ algorithm (for which $\hat\theta_N=\mathcal X^M_{N}(f_M)$) and for the FV IPS (for which $\hat\theta_N=\mathcal Y^M_{N}(f_M)$) with $N=N_{min}=N_{max}=10$ particles. Note that the FV IPS is not defined when $M=+\infty$.}
\end{table}

\begin{table}[H]
	\begin{tabular}{ |c|c|C{3.5cm}|C{3.5cm}|C{3.5cm}| } 
		\hline
		\multicolumn{2}{|c|}{$N=100$\phantom{$\Big)$}}   & $|\mathbb E(\hat\theta_N)-\nu_M(f_M)|$ & $\text{Std}(\hat\theta_N)$ & $(A_T+B_T)/T$\\ 
		\hline
		\multirow{2}{*}{$M=10$}& $N_{min}$--$N_{max}$ &\phantom{$\Big)$} $0.01$ & $0.12$ & $144$ \\
		\cline{2-5}
		&FV  & \phantom{$\Big)$} $0.02$ & $0.18$ & $857$ \\
		\hline
		\multirow{2}{*}{$M=100$}& $N_{min}$--$N_{max}$ & \phantom{$\Big)$} $0.01$ & $0.12$ & $144$ \\
		\cline{2-5}
		&FV  & \phantom{$\Big)$} $0.10$ & $0.39$ & $9866$ \\
		\hline
		\multirow{2}{*}{$M=1000$}& $N_{min}$--$N_{max}$ & \phantom{$\Big)$} $0.01$ & $0.12$ & $144$ \\
		\cline{2-5}
		&FV  & \phantom{$\Big)$} $0.20$ & $0.50$ & $99873$ \\
		\hline
		\multirow{2}{*}{$M=+\infty$}& $N_{min}$--$N_{max}$ & \phantom{$\Big)$} $0.01$ & $0.12$ & $144$  \\
		\cline{2-5}
		&FV  & \phantom{$\Big)$} $*$ & $*$ & $*$\\
		\hline 
	\end{tabular}
\caption{\label{tab:N100} For each value of $M\in\{10,100,1000,+\infty\}$, we display the bias of the estimator $\hat \theta_N$, its standard deviation, and the number of events,  for the $N_{min}$--$N_{max}$ algorithm (for which $\hat\theta_N=\mathcal X^M_{N}(f_M)$) and for the FV IPS (for which $\hat\theta_N=\mathcal Y^M_{N}(f_M)$) with $N=N_{min}=N_{max}=100$ particles. Note that the FV IPS is not defined when $M=+\infty$.}
\end{table}

\bigskip

We can see that, for this particular example, the $N_{min}$--$N_{max}$ algorithm performs better in all situations, with respect to all metrics. Moreover, the bias of the FV approximation increases when $M$ increases, while the variance of the estimator also increases and the computation cost increases. Without surprise, increasing $M$ does not change the dynamic of the $N_{min}$--$N_{max}$ algorithm, since very few particles, if any, reach the boundary $M$ even when $M=10$.  Further, this suggests that it should be possible to extend the $N_{min}$--$N_{max}$ algorithm to the case of unbounded branching rates, and this is supported by the numerical simulations when $M=+\infty$ (for which the FV IPS is of course not defined), although it is not covered by our main result. We intend to address these questions in future work.

\begin{rem} \label{rem:timeInhom}
We note that the birth-death process studied above is a typical example of the type of setting where the $N_{min}$--$N_{max}$ process may outperform the FV IPS. Here, we briefly outline another example where this is also the case.

\smallskip

We consider a branching random walk on $\{0, \dots, n\}$ for some $n \ge 1$ with a time inhomogeneous branching rate. More precisely, let $(X_t)_{t \ge 0}$ denote a continuous time random walk on $\mathbb Z$ that moves one step to the right with probability $p \in (0, 1)$ and one step to the left otherwise. Each step is taken after an (independent) exponentially distributed time with parameter $1$.

\smallskip

Now let $b, k: \{0, \dots, n\} \to [0, \infty)$ be bounded measurable functions and $\mathcal{E}_b$, $\mathcal{E}_k$ be exponential random variables, that are independent of each other and $X$, and define
\[
  \tau_1^b = \inf\{t > 0 : \int_0^t b(X_s){\rm d} s > \mathcal{E}_b\},
\]
and
\[
  \tau_1^k = \inf\{t > 0 : \int_0^t k(X_s){\rm d} s > \mathcal{E}_k\}.
\]
Further, set $\tau_1 = \tau_1^b \wedge \tau_1^k$. If $\tau_1 = \tau_1^b$, a new particle is added to the system at position $X_{\tau_1^b}$, which continues the same behaviour as the original particle. If $\tau_1 = \tau_1^k$, the particle is removed from the system. We can then define $\tau_2$ to be the first time after $\tau_1$ that one of the particles alive at time $\tau_1$ branches or is killed, and this process continues iteratively. 

\smallskip

Letting $N_t$ denote the number of particles alive at time $t \ge 0$, and $\{X_t^i \,: \, i = 1, \dots, N_t\}$ their positions, the branching random walk is defined as
\[
  Z_t = \sum_{i = 1}^{N_t}\delta_{X_t^i}, \qquad t \ge 0.
\]

\smallskip

We now consider a time-inhomogeneous version of the above process. We consider a sequence of switching times $0 = T_0^{\texttt{off}}< T_1^{\texttt{on}} < T_1^{\texttt{off}} < T_2^{\texttt{on}} < T_2^{\texttt{off}} < \dots$, defined via
\[
  T_i^{\texttt{on}}  = T_{i - 1}^{\texttt{off}} + \mathbf{e}_{i - 1}(s_{\texttt{on}}), \qquad i \ge 1,
\]
and 
\[
  T_i^{\texttt{off}}  = T_{i}^{\texttt{on}} + \mathbf{e}_{i}(s_{\texttt{off}}), \qquad i \ge 1,
\]
where $\mathbf{e}_{i}(s_{\texttt{off}})$ and $\mathbf{e}_{i}(s_{\texttt{on}})$ are independent (of each other and everything else) exponential random variables with rate $s_{\texttt{off}}$ and $s_{\texttt{on}}$, respectively. For $T_{i-1}^{\texttt{off}} \le t < T_{i}^{\texttt{on}}$, $Z_t$ evolves as above but with $b = 0$, i.e. each particle moves as a random walk until it is (possibly) killed, until time $T_{i}^{\texttt{on}}$. At time $T_{i}^{\texttt{on}}$, we sample from an exponential random variable with rate $B \in (0, \infty)$ and set $b$ to be this value. For $T_{i}^{\texttt{on}} \le t < T_{i}^{\texttt{off}}$, $Z_t$ then evolves as the branching random walk with branching rate $b$ for each particle. At time $T_{i}^{\texttt{off}}$, we again set the branching rate to be $0$, and continue this process until some terminal time $T$. 

\smallskip

We can see that in this case, while the branching rate is bounded almost surely on any compact time interval, we may simulate the $N_{min}$--$N_{max}$ process.
However, we cannot use the fixed size Moran type  algorithm since we do not know the upper bound for the branching rate. 

\smallskip

\end{rem}

\subsubsection{Two-level Monte Carlo methods for estimating Lyapunov coefficients}
\label{sec:filtering}

In many applications, it can be of interest to estimate numerically the growth rate of the semigroup $Q$ for large times. A natural conjecture is that for large times $T$, $m_0Q_T f \approx \langle m_0, \psi\rangle \langle \varphi, f\rangle \mathrm{e}^{\lambda T}$, where $\varphi$ and $\psi$ are the left eigenmeasure and right-eigenfunction respectively of the operator $Q$, and $\lambda$ is the \textit{Lyapunov exponent} for $Q$. One can try to estimate $\lambda$ by computing the quantity $m_0Q_Tf$ for large times $T$.

By Theorem~\ref{thm:main}, one method for estimating $\lambda$ would be through equation \eqref{M21}, so that we can simulate the $(m_T,\Pi_T^A, \Pi_T^B)$-system and estimate $\lambda$ by:
\begin{equation}\label{eq:1}
\hat{\lambda} = \frac{1}{T}\log\left( \Pi_T^A \Pi_T^B m_T(\mathbf{1})\right) = \frac{1}{T}\log\left( \Pi_T^A \Pi_T^B \right),
\end{equation}
for $T$ large. In practice, however, this is numerically a poor choice of estimator, due to large-deviation effects. The presence of the `exponential' functionals, $\Pi_T^A, \Pi_T^B$ mean that significant contributions to the expectation will be made by paths of the $N_{min}$--$N_{max}$ process which have probability $~e^{-\delta T}$, and so infeasibly many simulations will be needed to compute an unbiased estimate for $\hat{\lambda}$. A simple solution is to replace the estimator by
\begin{equation} \label{eq:2}
\bar{\lambda} = \frac{1}{\delta t}\log\left[\frac{1}{n} \sum_{i < n} \left( \frac{\Pi_{t_{i+1}}^A \Pi_{t_{i+1}}^B}{\Pi_{t_{i}}^A \Pi_{t_{i}}^B} \right)\right],
\end{equation}
where $t_i = T \frac{i}{N}$ and $\delta t = \frac{T}{N}$. In many examples, under conditions which ensure that the process is ergodic and thus converges to a stationary distribution, as $T \to \infty$, for fixed $\delta t$, the estimator $\bar{\lambda}$ can be shown to converge to $\lambda$ (e.g. \cite{del_moral_particle_2003}).

In some cases the expression in \eqref{eq:2} may not be valid and we can only use estimates of the form in \eqref{eq:1}. Examples may be when the process is not ergodic, or the system is not time-homogenous, such as the example given in Remark~\ref{rem:timeInhom}.  In such circumstances, we propose a novel Monte Carlo method to compute estimates of $\lambda$. Specifically, we note that the $N_{min}$--$N_{max}$ process  $\mathcal{X}_t := \{ X_t^1, \ldots, X_t^{N_t}\}$ defines a continuous time Markov process, while the processes $A_t$ and $B_t$ corresponding to the number of selection and resampling events, are themselves additive functionals of the Markov process. Estimation of $\lambda$ corresponds to computing a Feynman-Kac type exponential functional for a Markov process, and it is well known that estimating such functionals can be performed efficiently using interacting particle simulations, \cite{del2000moran, del2000branching, DelMoral2013, DelMoral2004}.

To try to manage this error, we propose the following algorithm. Informally, we simulate multiple independent copies of the $N_{min}$--$N_{max}$ process, in continuous time, using the algorithm described in Section~\ref{sec:descr}. At fixed time intervals, we perfom a resampling of the independent  $N_{min}$--$N_{max}$ processes, based on the individual weights accrued since the last resampling.

This suggests an algorithm as follows, assuming that $b^i$, $\kappa^i$, $p^i$ and $q^i$ have been chosen: 

{\bfseries Two-level Monte Carlo algorithm for computing Lyapunov Exponents}
\begin{enumerate}
	\item Fix $T, N^*, \delta t,  N_0, x_0$. Initiate with $t = 0$, and $\mathcal{X}^1, \ldots, \mathcal{X}^{N^*}$ each containing $N_0$ particles of initial value $x_0$. Set  $W = 1$. 
	\item Run each $\mathcal{X}^i$ from time $t$ to time $t' = \min\{t+\delta t, T\}$  according to the $N_{min}-N_{max}$ algorithm described in Section~\ref{sec:descr}, setting $w_i :=  \frac{\Pi_{t'}^{A^i} \Pi_{t'}^{B^i}}{\Pi_{t}^{A^i} \Pi_{t}^{B^i}}$.
	\item Set $W := W \times \left(\frac1{N^*}\sum_{i=1}^{N^*} w_i\right)$.
	\item If $t' < T$, set $t = t'$, resample each $\mathcal{X}^i$ from the existing particles $\{\mathcal{X}^1, \ldots, \mathcal{X}^{N^*}\}$ with probabilities proportional to $w_1, \ldots, w_{N^*}$, independently (multinomial resampling). Return to step 2.
	\item If $t'=T$, stop, and return
	\begin{equation*}
	\hat{\lambda} := \frac{1}{T} \log\left( W\right).
	\end{equation*}
\end{enumerate}

Here, the choice of multinomial resampling could be replaced by other resampling schemes. Similarly, the resampling could be conditional on properties of the particles (e.g. the effective sample size). We compare the effectiveness of the two methods in Figure~\ref{fig:PFMC}, for a simple Markov process where the true value of the Lyapunov exponent can be calculated accurately numerically. We note that both methods have similar computational effort, however the iterated method achieves a much better numerical estimate of $\lambda$.

\begin{figure}[H]
	\centering
	\begin{subfigure}[t]{.475\textwidth}
		\includegraphics[width=\linewidth]{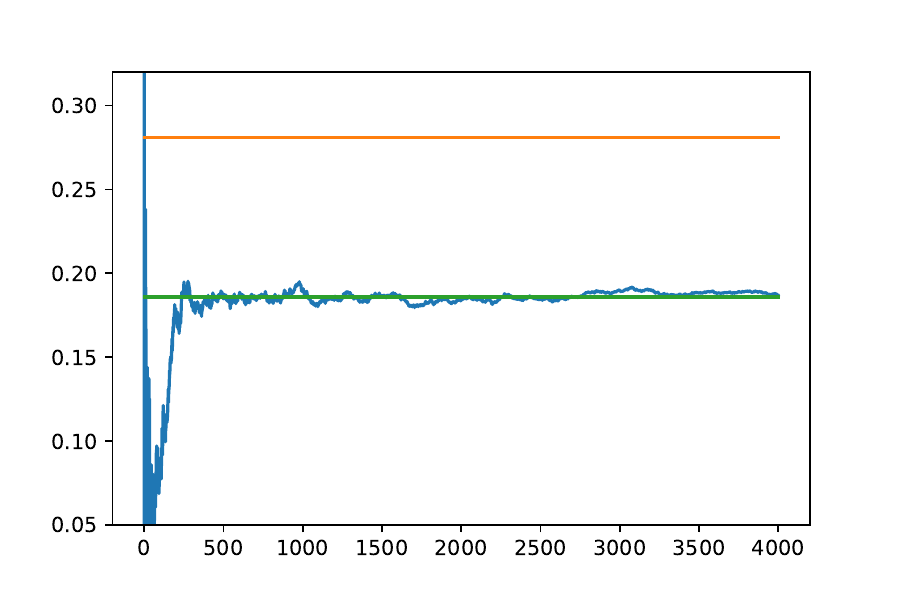}
		\caption{Single Trajectory Simulation}
	\end{subfigure}%
	\begin{subfigure}[t]{.475\textwidth}
		\includegraphics[width=\linewidth]{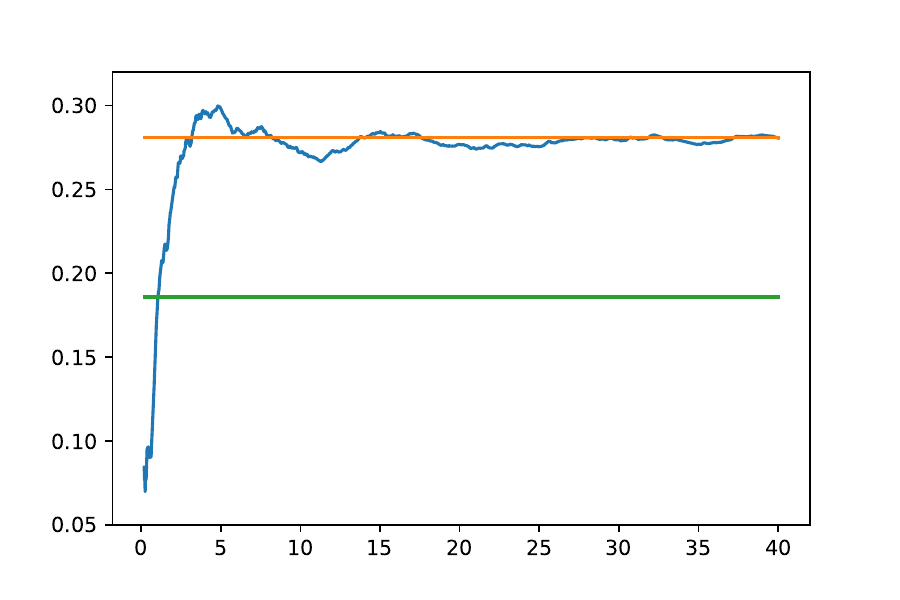}
		\caption{Two-level MC Simulation}
	\end{subfigure}
	\caption{We use two methods described above to estimate the eigenvalue $\lambda$. The first simulation (left) estimates $\lambda$ using \eqref{eq:1}, based on a single trajectory run up to time $T=4000$. The true value of $\lambda$ is highlighted with the orange line, the green line represents an analytical estimate of the convergence value for the simulation (in this case, an estimate of the average growth of the exponential weights under the stationary measure for the system). In the second figure (right), $\lambda$ is estimated using an interacting particle system, as described above. In this case, the particle filter has 100 particles, and is run for a total time of $T=40$, giving comparable computational effort. It should be noted however that the second case also has a computational cost associated with the resampling scheme applied, and so the overall computational cost will be higher depending on the resampling scheme applied, and the frequency of resampling events. }
	\label{fig:PFMC}
\end{figure}

\section{Proof of Theorem~\ref{thm:main}}
\label{sec:proof}

This section is dedicated to the proof of Theorem~\ref{thm:main}, which is rather long and so we break it up into five steps. In the first three steps, the main idea is to define two martingales $\mathbb{M}$ and $\mathcal{M}$ such that
\begin{align}
\nu_t^f - \nu_0^f &= \Pi_t^A\Pi_t^B(\mathbb M_t-\mathbb M_{\tau_{C_t}})+\sum_{n=1}^{A_t} \Pi_{\rho_n}^A\Pi_{\rho_n}^B\left(\mathcal{M}_{\rho_n} - \mathcal{M}_{\rho_n-}\right)+\sum_{n=1}^{B_t} \Pi_{\sigma_n}^A\Pi_{\sigma_n}^B(\mathcal{M}_{\sigma_n} - \mathcal{M}_{\sigma_n-})\notag\\
&\phantom{ \Pi_t^A\Pi_t^B(\mu_t-\mu_{\tau_{C_t}}}\phantom{+\sum_{n=1}^{B_t} \Pi^A_{\sigma_{n-1}}\Pi^B_{\rho_{n-1}}(\mathbb M_{\rho_n}-\mathbb M_{\rho_{n-1}})}
+\sum_{n=1}^{C_t} \Pi^A_{\tau_{n-1}}\Pi^B_{\tau_{n-1}}(\mathbb M_{\tau_n}-\mathbb M_{\tau_{n-1}}),
\label{mgdecomp2}
\end{align}
where $\sigma_n$ and $\rho_n$ denote the $n^{th}$ selection and resampling events, respectively, and 
\[
   \nu_t^f = \Pi_t^A\Pi_t^B\sum_{i \in\bar S_t} Q_{T-t}f(X_t^{i}).
\]
The martingale $\mathbb{M}$ compensates the evolution of the branching process between resampling/selection times. The martingale $\mathcal{M}$ compensates the evolution of the particle system during resampling/selection events.

The many-to-one formula \eqref{M21} then follows. The final two steps make use of the following technical lemma to control the $L^2$ norms of the martingale increments in order to obtain \eqref{L2bound}.

\begin{lem}
    \label{lem:usefulbound}
    For all $\ell\geq 1$,  for all $T\geq 0$,
    \[
    \E\left(\left(\Pi_T^B\right)^{\ell}\right)\leq \exp(c_\ell\|b\|_\infty  T),
    \]
    where
    \[
    c_{\ell}:=\sup_{x\geq 2} x\left((1+1/x)^\ell-1\right).
    \]
    In addition, denoting by $\beta_T$ the total number of branching events (with or without selection) up to time $T$, we have
    \[
    \E\left(\beta_T\,\left(\Pi_T^B\right)^{2}\right)\leq N_0\exp((3+c_4) \|b\|_\infty T/2).
    \]
\end{lem}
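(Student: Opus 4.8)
The plan is to treat $(\Pi_t^B)^\ell$ as a pure-jump functional of the BBMMI and bound its expectation through Dynkin's formula and Gr\"onwall's inequality, and then to derive the second estimate from the first one via Cauchy--Schwarz.

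For the first bound, I would use that $t\mapsto (\Pi_t^B)^\ell$ is constant between selection times and is multiplied by $(1+1/\bar N_{\sigma_n})^\ell$ at the $n$-th selection event $\sigma_n$. A selection can only be triggered by a branching event, so in a configuration with population $\bar N_s$ the instantaneous rate $r_s$ of selection events satisfies $r_s=\sum_{i\in\bar S_s}b^i_s q^i_s\le\sum_{i\in\bar S_s}b^i_s\le \bar N_s\|b\|_\infty$. Writing Dynkin's formula along the localising sequence of successive event times (which increases to $+\infty$ by Assumption~2) gives
\[
\E\big[(\Pi_t^B)^\ell\big]=1+\E\left[\int_0^t (\Pi_s^B)^\ell\big((1+1/\bar N_s)^\ell-1\big)\,r_s\,\D s\right].
\]
Since $\bar N_s\big((1+1/\bar N_s)^\ell-1\big)\le\sup_{x\ge2}x\big((1+1/x)^\ell-1\big)=c_\ell$ (using that $\bar N_s\ge2$ at selection times in the regime of interest), the integrand is bounded by $c_\ell\|b\|_\infty(\Pi_s^B)^\ell$, and Gr\"onwall's inequality yields $\E[(\Pi_t^B)^\ell]\le e^{c_\ell\|b\|_\infty t}$.

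For the second bound, I would apply Cauchy--Schwarz,
\[
\E\big[\beta_T(\Pi_T^B)^2\big]\le\E[\beta_T^2]^{1/2}\,\E\big[(\Pi_T^B)^4\big]^{1/2},
\]
and control the second factor by the first part of the lemma with $\ell=4$, so that $\E[(\Pi_T^B)^4]^{1/2}\le e^{c_4\|b\|_\infty T/2}$. It then suffices to show $\E[\beta_T^2]\le \bar N_0^2 e^{3\|b\|_\infty T}$. Here I would dominate the branching count $\beta_T$ by the analogous count for a pure binary branching (Yule) process in which each particle branches at the constant rate $\|b\|_\infty$: the total branching intensity of the BBMMI is at most $\bar N_s\|b\|_\infty$, and selection, (soft and hard) killing and resampling never increase the branching count, so a coupling keeps the BBMMI population below the Yule population $Z_s$ and realises the BBMMI branchings as a thinning of the Yule branchings, whence $\beta_T\le Z_T-\bar N_0$. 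Using the explicit moments $\E[Z_T]=\bar N_0 e^{\|b\|_\infty T}$ and $\E[Z_T^2]=(\bar N_0^2+\bar N_0)e^{2\|b\|_\infty T}-\bar N_0 e^{\|b\|_\infty T}$, an elementary inequality valid for $\bar N_0\ge2$ (the exponent $3$ leaving room over the true $e^{2\|b\|_\infty T}$ growth) gives $\E[\beta_T^2]\le\E[Z_T^2]\le\bar N_0^2 e^{3\|b\|_\infty T}$, and combining the three estimates produces the claimed $\bar N_0\, e^{(3+c_4)\|b\|_\infty T/2}$.

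The main obstacle I anticipate is the uniform control of the branching intensity by $\|b\|_\infty$: this rests on bounding the per-particle branching rate $b^i_s$ by $\|b\|_\infty$, which is where the boundedness of $b$ (and the structure of the models of interest, where $b^i=b(X^i)$) enters crucially and should be stated carefully. Two further technical points require attention: making the Dynkin/Gr\"onwall step rigorous by localising along the event times and passing to the limit via monotone convergence (legitimate thanks to Assumption~2), and verifying the coupling that maintains $\bar N_s\le Z_s$, which must correctly account for selection and resampling events keeping the population constant while still consuming, respectively not producing, a $b$-branching. The constants are arranged so that the final exponent is exactly $(3+c_4)/2$ with unit prefactor.
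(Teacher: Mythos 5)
Your proof is correct and achieves the paper's constants, but the first bound travels a genuinely different route. You identify the compensator of the pure-jump functional $(\Pi_t^B)^\ell$ — selections arrive with intensity $r_s=\sum_i b_s^i q_s^i\le \bar N_s\|b\|_\infty$ and each one multiplies $(\Pi^B)^\ell$ by $(1+1/\bar N_s)^\ell$ — and then apply Dynkin's formula and Gr\"onwall after localising at the event times, passing to the limit by monotone convergence (valid, since $\Pi^B$ is nondecreasing in $t$ and $\tau_n\to\infty$ by Assumption~2). The paper never computes a compensator: it proves the elementary one-step estimate $\E\big[(\Pi^B_{\tau_1\wedge\delta})^\ell e^{-C(\tau_1\wedge\delta)}\big]\le 1$ for every $C>c_\ell\|b\|_\infty$ and suitably small $\delta$, using only crude bounds on $\E[e^{-C(\tau_1\wedge\delta)}]$ and on the probability that the first event is a selection, then iterates along the stopping times $\theta_n$ via the Markov property and concludes with Fatou's lemma that $\E[(\Pi_T^B)^\ell e^{-CT}]\le 1$. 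Your version is shorter and more transparent, but its key input — that selection times admit the stochastic intensity $\sum_i b_s^iq_s^i$ with respect to the natural filtration — is precisely what the paper's construction does not establish (the appendix proves only the Markov property), and the paper's discounting argument is engineered to avoid needing it; taking your route, this point-process fact must be justified. For the second bound the two proofs essentially coincide: Cauchy--Schwarz, $\E[(\Pi_T^B)^4]\le e^{c_4\|b\|_\infty T}$ from the first part, and domination of $\beta_T$ by a Yule process with per-capita rate $\|b\|_\infty$; you finish with the explicit Yule second moment $(N_0^2+N_0)e^{2\|b\|_\infty T}-N_0e^{\|b\|_\infty T}$, and the inequality you invoke reduces to $(N_0u-1)(u-1)\ge 0$ with $u=e^{\|b\|_\infty T}$, so it in fact holds for all $N_0\ge 1$, whereas the paper obtains the same bound $N_0^2e^{3\|b\|_\infty T}$ from the Lyapunov estimate $LV(n)=\|b\|_\infty n(2n+1)\le 3\|b\|_\infty V(n)$ with $V(n)=n^2$. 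Finally, the two caveats you flag — needing $\bar N_s\ge 2$ at selection times so that the excess jump factor is at most $c_\ell/\bar N_s$, and the per-particle bound $b^i\le\|b\|_\infty$ (which does not follow from the balance condition~\eqref{eq:balance} alone) — are equally implicit in the paper's proof, in the step $\beta-1\le c_\ell/N_0$ and in the rate bound $N_0\|b\|_\infty$ respectively, so they are shared limitations rather than gaps specific to your argument.
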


The above estimates are the main reason why we require $b$ to be bounded. These types of estimates are not difficult to derive in some particular situations where the branching rate is unbounded (typically for time inhomogeneous branching rates, or in situations where the branching rates is high in rarely visited regions), and in these situations, our proofs adapt easily. We defer the proof of the above lemma to the end of the section and now proceed to the proof of Theorem~\ref{thm:main}.

\bigskip

\begin{proof}[Proof of Theorem~\ref{thm:main}]
$ $\newline
\textit{Step 1. The martingale $\mathbb{M}$.} For all $i\in\mathbb N$, we define respectively the \textit{birth time} $\BD_i$ and the \textit{death time} $\DD_i$ of the particle with index $i$ as
\begin{align*}
\BD_i=\inf\{t\geq 0,\ i\in\bar S_t\}\text{ and }\texttt D_i=\inf\{t\geq \BD_i,\ i\notin\bar S_t\}.
\end{align*}

Fix $T > 0$ and $t\in [0,T]$. For all particles $i\in\mathbb N$, we set
\begin{equation}
\mathbb{M}^{i,n}_t = 
\begin{cases}
0, \quad \text{if } t \le \BD_i\vee \tau_n, \\
Q_{T-t}f(X^{i}_t) - Q_{T-\BD_i\vee \tau_n}f(X_{\BD_i\vee\tau_n}^{i}), \quad \text{if } t \in [\BD_i\vee \tau_n, \DD_i\wedge \tau_{n+1}),\\
(1-\mathbf{1}_{D^n(i)}+\mathbf{1}_{B^n(i) })Q_{T-\DD_i\wedge \tau_{n+1}}f(X_{\DD_i\wedge \tau_{n+1}}^{i}) - Q_{T-\BD_i\vee \tau_n}f(X^{i}_{\BD_i\vee \tau_n}) , \quad \text{if } t\ge \DD_i\wedge \tau_{n+1},
\end{cases}
\end{equation}
where $B^n(i)$ denotes the event ``$\tau_{n+1}$ is a branching time for the $i$-th particle'' and $D^n(i)$ denotes the event ``$\tau_{n+1}=\DD_i$ is a soft killing time for the $i$-th particle''. 

\smallskip

By definition of the process between the time intervals $[\tau_n,\tau_{n+1})$,  to show that $\mathbb{M}$ is indeed a martingale, it is sufficient to prove that
\[
	\mathbb{M}^{1,0}_t:=\begin{cases}
	Q_{T-t}f(X^1_t) - Q_{T}f(X^1_0), \quad \text{if } t \in [0, \tau_1),\\
	(1-\mathbf 1_{D(1)}+\mathbf{1}_{B(1)})Q_{T-\tau_1}f(X_{\tau_1}) - Q_{T}f(X_{0}), \quad \text{if } t\ge \tau_1,
	\end{cases}
\]
is a martingale. 
We observe that, by construction, $\tau_1=\tau^\partial\wedge \tau^\kappa\wedge\tau^b$ where 
\begin{align*}
\tau^b&=\inf_{j\in \bar S_0}\inf\{t\geq 0,\ \int_0^t b^j((X^i_u)_{i\in\bar S_u})\,\mathrm du\geq e^b\},\\
\tau^\kappa&=\inf_{j\in \bar S_0}\inf\{t\geq 0,\ \int_0^t \kappa^j((X^i_u)_{i\in\bar S_u})\,\mathrm ds\geq e^\kappa\},\\
\tau^\partial&=\inf_{j\in \bar S_0}\inf\{t\geq 0\ X^j_t\in \partial\}.
\end{align*}

\smallskip

Then, for all $s,t\geq 0$ such that $s+t\leq T$, setting $\mathbb{E}^t=\E[\cdot\mid \sigma(\bar S_u,\,\bar X_u,0\leq u \leq t)]$ and using the fact that $Q_u f\equiv 0$ on $\partial$ for all $u\geq 0$, we have
\begin{align}
\label{eq:MartStep0}
\E^t(\mathbb{M}^{1,0}_{t+s})=\E^t&\left[\left(Q_{T-(t+s)}f(X^1_{t+s}) - Q_{T}f(X_0)\right)\mathbf{1}_{t+s<\tau_1}\right]\notag\\
&\qquad+\E_x^t\left[\left((1-\mathbf 1_{D(1)}+\mathbf{1}_{B(1)})Q_{T-\tau_1}f(X_{\tau_1}) - Q_{T}f(X_0)\right)\ind_{t<\tau_1\leq t+s}\right]\nonumber\\
&\qquad+\left((1-\mathbf 1_{D(1)}+\mathbf{1}_{B(1)})Q_{T-\tau_1}f(X_{\tau_1}) - Q_{T}f(X_0)\right)\ind_{\tau_1\leq t}
\end{align}
Using the fact that the particles are independent copies of $X$ before time $\tau_1$, and the Markov property at time $t$, we obtain
\begin{align*}
\E_x^t\left[Q_{T-(t+s)}f(X^1_{t+s})\mathbf{1}_{t+s<\tau_1}\right]
&=\texttt E^t\left[Q_{T-(t+s)}f(Y^1_{t+s})e^{-\int_t^{t+s} h_u\,\D u}\ind_{t+s<\tau_\dd}\right]\mathbf{1}_{t<\tau_1},
\end{align*}
where $\texttt E^t$ is the expectation on a probability space $\Omega'$ such that $(Y^i_u)_{u\geq t}$, $i\in\bar S_0$, are independent copies of $X$, starting from $X^i_t$ at time $t$, and where  
\[
h_u:=\sum_{j\in\bar S_0} b^j((Y^i_u)_{i\in\bar S_0})+\kappa^j((Y^i_u)_{i\in\bar S_0}).
\]
 Then, using the Markov property at time $t+s$, we obtain
\begin{align*}
\E_x^t\left[Q_{T-(t+s)}f(X^1_{t+s})\mathbf{1}_{t+s<\tau_1}\right]
&=\texttt E^t\left[e^{\int_{t+s}^T b(Y^1_u)-\kappa(Y^1_u)\,\D u}f(Y^1_{T})e^{-\int_t^{t+s} h_u\,\D u}\ind_{t+s<\tau_\dd}\right]\mathbf{1}_{t<\tau_1}\\
&=\texttt E^t\left[e^{\int_{t}^T b(Y^1_u)-\kappa(Y^1_u)\,\D u}f(Y^1_{T})e^{-\int_t^{t+s} h^1_u\,\D u}\ind_{t+s<\tau_\dd}\right]\mathbf{1}_{t<\tau_1},
\end{align*}
where $h^1_u:=2b^1((Y^i_u)_{i\in\bar S_0})+\sum_{j\in\bar S_0,j\neq 1} b^j((Y^i_u)_{i\in\bar S_0})+\kappa^j((Y^i_u)_{i\in\bar S_0})$.
Similarly, since $\kappa^1$ is the intensity of the soft killing time for the first particle, and $b^1$ is the intensity of its branching time, we obtain
\begin{align*}
&\E_x^t\left[(1-\ind_{D(1)}+\ind_{B(1)})Q_{T-\tau_1}f(X^1_{\tau_1})\mathbf{1}_{t<\tau_1\leq t+s}\right]\\
&=\texttt E^t\left[\int_t^{(t+s)\wedge\tau_\dd}(h_u-\kappa^1((Y^i_u)_{i\in\bar S_0})+b^1((Y^i_u)_{i\in\bar S_0}))Q_{T-u}f(Y^1_{u})e^{-\int_t^u h_v\,\mathrm dv}\mathrm du\right]\,\ind_{t<\tau_1}\\
&\qquad+\texttt E^t\left[(1-\ind_{Y^1_{\tau_\dd}\in\dd })e^{-\int_t^{\tau_\dd} h_u\,\mathrm du}\ind_{\tau_\dd\leq t+s}Q_{T-\tau_\dd}f(Y^1_{\tau_\dd})\right]\,\ind_{t<\tau_1}.
\end{align*}
Hence, using the fact that $Y$ satisfies the strong Markov property at time $\tau_\partial$, and the fact that $Q_{T-\tau_\partial}f(Y^1_{\tau_\partial})=0$ on the event $\{Y^1_{\tau_\dd}\in\dd\}$, we deduce that
\begin{align*}
&\E_x^t\left[(1-\ind_{D(1)}+\ind_{B(1)})Q_{T-\tau_1}f(X^1_{\tau_1})\mathbf{1}_{t<\tau_1\leq t+s}\right]\\
&=\texttt E^t\left[\int_t^{(t+s)\wedge \tau_\partial}h^1_u\,e^{\int_{u}^T (b(Y^1_v)-\kappa(Y^1_v))\,\mathrm dv}\,f(Y^1_T)e^{-\int_t^u h_v\,\mathrm dv}\,\mathrm du\right]\,\ind_{t<\tau_1}\\
&\qquad+\texttt E^t\left[e^{-\int_t^{\tau_\dd} h_u\,\mathrm du}\ind_{\tau_\dd\leq t+s}e^{\int_{\tau_\dd}^{T} (b(Y^1_v)-\kappa(Y^1_v))\,\mathrm dv }f(Y^1_{T})\right]\,\ind_{t<\tau_1}\\
&=\texttt E^t\left[e^{\int_{t}^T (b(Y^1_v)-\kappa(Y^1_v))\,\mathrm dv}\,f(Y^1_T)\,\left(\int_t^{(t+s)\wedge\tau_\dd}h^1_u\,e^{-\int_t^u h^1_v\,\mathrm dv}\mathrm du+e^{-\int_t^{\tau_\dd} h^1_v\,\mathrm dv}\ind_{\tau_\dd\leq t+s}\right)\right]\,\ind_{t<\tau_1},
\end{align*}
where we used the assumption $b(Y^1_v)-\kappa(Y^1_v)=b^1((Y^i_u)_{i\in\bar S_0})-\kappa^1((Y^i_u)_{i\in\bar S_0})$. But 
\[
e^{-\int_t^{t+s} h^1_u\,\D u}\ind_{t+s<\tau_\dd}+\int_t^{(t+s)\wedge\tau_\dd}h^1_u\,e^{-\int_t^u h^1_v\,\mathrm dv}\mathrm du+e^{-\int_t^{\tau_\dd} h^1_v\,\mathrm dv}\ind_{\tau_\dd\leq t+s}=1
\]

Hence we deduce from~\eqref{eq:MartStep0} that
\begin{align*}
\E_x^t(\mathbb{M}^{1,0}_{t+s})&=(1-\mathbf 1_{D(1)}+\mathbf{1}_{B(1)})\left(Q_{T-\tau_1}f(X_{\tau_1}) -Q_{T}f(X_0)\right)\ind_{\tau_1\leq t}\\
&\qquad+ \left(\texttt E^t\left[e^{\int_{t}^T (b(Y^1_v)-\kappa(Y^1_v))\,\mathrm dv}\,f(Y^1_T)\right]-Q_Tf(x) \right) \ind_{t<\tau_1}\\
                              &=(1-\mathbf 1_{D(1)}+\mathbf{1}_{B(1)})Q_{T-\tau_1}f(X_{\tau_1})\ind _{\tau_1 \le t}  + Q_{T-t}f(X^1_t)\,\ind_{t<\tau_1}-Q_Tf(x)\\
  & = \mathbb{M}^{1,0}_{t}.
\end{align*}
This concludes Step~1.

\bigskip\noindent\textit{Step 2. A first martingale decomposition.}

Fix $t \ge 0$ and recall that $C_t = \inf\{n \ge 0 : t<\tau_{n+1}\}$.  In what follows, we denote by $\rho_1,\rho_2,\ldots$ the sequence of times at which a resampling event occurs, and by $\sigma_1,\sigma_2,\ldots$ the sequence of times at which a selection event occurs. Note that, if $\tau_n\in\{\rho_1,\rho_2,\ldots\}$ or $\tau_n\in\{\sigma_1,\sigma_2,\ldots\}$, then $N_n=N_{n-1}\in\{N_{min},\ldots,N_{max}\}$.

\bigskip

We will show that, for all $t \ge 0$, we have
\begin{align}
\mathbb M_t:=\sum_{n=0}^{C_t}\sum_{i \in \bar S_t}\mathbb{M}_t^{i,n} = &\sum_{i\in\bar S_t}Q_{T-t}f(X_t^{i}) -\sum_{i\in\bar S_0}Q_{T}f(X_0^{i}) \notag\\
&\qquad- \sum_{n=1}^{A_t}Q_{T-\rho_{n}}f(X_{\rho_{n}}^{i_{n}}) + \sum_{n=1}^{B_t}Q_{T-\sigma_n}f(X_{\sigma_n}^{j_{n-1}}),
\label{mgdecomp1}
\end{align}
where we recall that $A_t$ is the number of resampling events up to time $t$, $B_t$ is the number of selection events up to time $t$, and where we have set $i_{n}$ to denote the index of the particle added at time $\rho_n$ and $j_{n-1}$ is the particle killed (due to a selection event) at time $\sigma_n$ (as above, $X_{\rho_{n}}^{j_{n-1}}$ denotes the position of the particle before the selection event).

\smallskip

We will show that \eqref{mgdecomp1} holds for $n = 0, 1$, since the general formula then follows using similar arguments. 

\smallskip

\noindent Noting that for all $t\in[\tau_0,\tau_1)$, $C_t=A_t=B_t=0$, in this case we have
 \begin{align*}
\sum_{n=0}^{C_t} \sum_{i \in  \bar S_t}\mathbb{M}_t^{i,n}= \sum_{i\in \bar S_0}\mathbb{M}_t^{i,0}=\sum_{i\in\bar S_0}Q_{T-t}f(X_t^{i})-\sum_{i\in\bar S_0}Q_{T}f(X_0^{i}),
 \end{align*}
as required.

\smallskip
 
Let us now describe the evolution of the martingale at time $\tau_1$, depending on the type of event that occurs.
 
 \medskip\noindent\textbf{Case  $\tau_1=\rho_1$.}
In this case, for all $t\in[\tau_1,\tau_2)$, we have $A_t = 1$ and $B_t = 0$, and $\bar S_t=\{1+\max \bar S_0\}\cup \bar S_0\setminus\{i'_0\}$, where $i'_0$ is the particle removed at time $\tau_1$. Thus
 \begin{align*}
\sum_{n=0}^{C_t} \sum_{i \in \bar S_t}\mathbb{M}_t^{i,n} &=  \sum_{i\in \bar S_0}\mathbb{M}_t^{i,0}+\sum_{i\in \bar S_{\tau_1}} \mathbb{M}_t^{i,1}\\
&= \sum_{i\in \bar S_0} \left[(1-\ind_{i=i'_0})Q_{T-\tau_1}f(X_{\tau_1}^{i})-Q_{T}f(X_0^{i})\right]+ \sum_{i\in \bar S_{\tau_1}} \big[Q_{T-t}f(X^{i}_t) - Q_{T-\tau_1}f(X_{\tau_1}^{i})\big]\\
&= \sum_{i\in \bar S_{\tau_1}} Q_{T-t}f(X_{t}^{i})-\sum_{i\in \bar S_0} Q_{T}f(X_0^{i}) - Q_{T-\tau_1}f(X_{\tau_1}^{1+\max \bar S_0})\\
&= \sum_{i\in\bar S_t} Q_{T-t}f(X_{t}^{i})-\sum_{i\in \bar S_0} Q_{T}f(X_0^{i})-Q_{T-\tau_1}f(X_{\tau_1}^{i_1}),
 \end{align*}
as claimed.

 \medskip\noindent\textbf{Case  $\tau_1=\sigma_1$.}
 Similarly, if $\tau_1=\sigma_1$, then, for all $t\in[\tau_1,\tau_2)$, we have $A_t = 0$ and $B_t = 1$, and $\bar S_t=\{1+\max \bar S_0\}\cup \bar S_0\setminus\{j_0\}$. Denoting by $j'_0$ the particle duplicated at time $\tau_1$, we have
  \begin{align*}
 \sum_{n=0}^{C_t} \sum_{i \in \bar S_t}\mathbb{M}_t^{i,n} 
    &= \sum_{i\in \bar S_0}\mathbb{M}_t^{i,0}+\sum_{i \in \bar S_{\tau_1}}\mathbb{M}_t^{i,1}\\
 &= \sum_{i\in \bar S_0}\left[(1+\ind_{i=j'_0})Q_{T-\tau_1}f(X_{\tau_1}^{i})-Q_{T}f(X_0^{i})\right]
 + \sum_{i\in \bar S_{\tau_1}} \big[Q_{T-t}f(X^{i}_t) - Q_{T-\tau_1}f(X_{\tau_1}^{i})\big]\\
    &= \sum_{i\in S_1} Q_{T-t}f(X^{i}_t)-\sum_{i\in S_{0}}Q_{T}f(X_0^{i})+Q_{T-\tau_1}f(X^{j_0}_{\tau_1})\\
    &= \sum_{i\in \bar S_t} Q_{T-t}f(X^{i}_t)-\sum_{i\in \bar S_{0}}Q_{T}f(X_0^{i})+Q_{T-\tau_1}f(X^{j_0}_{\tau_1}).
 \end{align*}

\medskip\noindent\textbf{Case  $\tau_1\notin \{\rho_1,\sigma_1\}$.} If $\tau_1\notin \{\rho_1,\sigma_1\}$, we have, for all $t\in[\tau_1,\tau_2)$, $A_t=B_t=0$. If $\tau_1$ is a branching event of the particle with index $j'_0\in S_0$, then $\bar S_t=\{1+\max \bar S_0\}\cup\bar S_0$
\begin{align*}
\sum_{n=0}^{C_t} \sum_{i \in \bar S_t}\mathbb{M}_t^{i,n}&=\sum_{i\in \bar S_0} \left[(1+\ind_{i=j'_0})Q_{T-\tau_1}f(X^{i}_{\tau_1})-Q_{T}f(X_0^{i})\right]+\sum_{i\in \bar S_{\tau_1}} \left[Q_{T-t}f(X^{i}_t)-Q_{T-\tau_1}f(X_{\tau_1}^{i})\right]\\
&=\sum_{i\in \bar S_{\tau_1}} Q_{T-t}f(X^{i}_t)-\sum_{i\in \bar S_0} Q_{T}f(X^{i}_{0}),
\end{align*}
where we used the fact that $X^{j'_0}_{\tau_1}=X^{1+\max \bar S_0}_{\tau_1}$.

\smallskip

Continuing in this manner, we obtain~\eqref{mgdecomp1}.

\bigskip\noindent\textit{Step 3. A second martingale decomposition.}

For all time $t\geq 0$ and all bounded measurable function $f:E\to \mathbb{R}$, we
denote the integral of $f$ with respect to the occupation measure of the particle system at time $t$ by
\begin{equation}
\mu_t^f \coloneqq \sum_{i \in \bar S_{t}}Q_{T-t}f(X_t^{i}).
\label{empdist}
\end{equation}
Further, recall the definitions of $\Pi_t^A$, $\Pi_t^B$ and $\nu_t^f = \Pi_t^A\Pi_t^B\mu_t^f$. 

\bigskip

In this step, we prove that equation \eqref{mgdecomp2} holds with $\mathbb{M}$ defined as in the previous steps, and $\mathcal{M}$ to be defined shortly. To this end, we first note that from Step~2 we obtain
\begin{align}
\mu_t^f - \mu_0^f = \sum_{n = 0}^{C_t}\sum_{i\in \bar S_t}\mathbb{M}_t^{i,n} &+  \sum_{n=1}^{A_t}Q_{T-\rho_{n}}f(X_{\rho_{n}}^{i_{n-1}})-\sum_{n=1}^{B_t}Q_{T-\sigma_n}f(X_{\sigma_n}^{j_{n-1}}).
\end{align}
For $n \in \mathbb{N}$, consider the following martingale increments. We define
\begin{equation}
\mathcal{M}_{\rho_n} - \mathcal{M}_{\rho_n-} \coloneqq 
Q_{T - \rho_n}f(X_{\rho_n}^{i_{n-1}}) 
- \frac{1}{\bar N_{\rho_n} - 1}\sum_{i\in \bar S_{\rho_n-}\setminus\{ i'_{n-1}\}}Q_{T-\rho_n}f(X_{\rho_n}^{i}),
\label{mg2rho}
\end{equation}
where $i_{n-1}$ is the index of the particle duplicated at time $\rho_n$ and $i'_{n-1}$ is the index of the particle removed at time $\rho_n$ (note also that $\bar N_{\rho_n}=\bar N_{\rho_n-}=|\bar S_{\rho_n-}|$). We also define
\begin{equation}
\mathcal{M}_{\sigma_n} - \mathcal{M}_{\sigma_n-} \coloneqq 
\frac{1}{\bar N_{\sigma_n}+1}\left(\sum_{i\in \bar S_{\sigma_n-}}Q_{T-\sigma_n}f(X_{\sigma_n}^{i})+Q_{T-\sigma_n}f(X_{\sigma_n}^{j'_{n-1}})\right) - Q_{T-\sigma_n}f(X_{\sigma_n}^{j_{n-1}}),
\label{mg2sigma}
\end{equation}
where we recall that $j_{n-1}$ is the particle removed at time $\sigma_n$ and $j_{n-1}'$ is the index of the particle duplicated during the branching event at this time (note also that $\bar N_{\sigma_n}=\bar N_{\sigma_n-}=|\bar S_{\sigma_n-}|$). We assume further that $\mathcal{M}$ is constant except at these jump times.

Then, 
\begin{align}
\mu_t^f - \mu_0^f = \mathbb{M}_t &+ \mathcal{M}_t + \sum_{n=1}^{A_t}\frac{1}{\bar N_{\rho_n} - 1}\sum_{i\in \bar S_{\rho_n-}\setminus\{i_{n-1}'\}}Q_{T-\rho_n}f(X_{\rho_n}^{i})\notag \\
& - \sum_{n=1}^{B_t}\frac{1}{\bar N_{\sigma_n}+1}\left(\sum_{i\in \bar S_{\sigma_n-}}Q_{T-\sigma_n}f(X_{\sigma_n}^{i})+Q_{T-\sigma_n}f(X_{\sigma_n}^{j'_{n-1}})\right).
\end{align}

 For each $n\geq 1$, we define $\mu_{\tau_n-}^f$ and $\nu_{\tau_n-}^f$ by
\[
\mu_{\tau_n-}^f=\sum_{i\in S_{n-1}} f(X^{i}_{\tau_n})-\ind_{\tau_n\in\mathcal K} f(X^{i'_{n-1}}_{\tau_n})+\mathbf \ind_{\tau_n\in \mathcal B}f(X^{j'_{n-1}}_{\tau_n}) \text{ and }\nu_{\tau_n-}=\Pi_{\tau_{n-1}}^A\Pi_{\tau_{n-1}}^B \mu_{\tau_n-}
\]
where $\mathcal K$ is the set of killing times and $\mathcal B$ is the set of branching times. Informally, $\mu_{\tau_n-}$ and $\nu_{\tau_n-}$ represent the state of the particle system at time $\tau_n$ before the resampling or selection eventually occurring at time $\tau_n$. Note that, at any time $\tau_n$ that is not a resampling or a selection time, we have $\mu_{\tau_n-}=\mu_{\tau_n}$ and $\nu_{\tau_n-}=\nu_{\tau_n}$. Hence,  denoting by $(\theta_n)_n$ the sequence of events from the families $\rho$ or $\sigma$, we obtain for all $t\geq 0$,
\begin{align*}
\nu_t^f - \nu_0^f &= \nu_t-\nu_{\tau_{C_t}} +\sum_{n=1}^{C_t} \left(\nu_{\tau_n}-\nu_{\tau_{n}-}\right)
					 +\sum_{n=1}^{C_t} \left(\nu_{\tau_n-}-\nu_{\tau_{n-1}}\right)\\
                  &= \Pi_t^A\Pi_t^B(\mu_t-\mu_{\tau_{C_t}})+\sum_{n=1}^{A_t} 
                  	\nu_{\rho_n}-\nu_{\rho_{n}-}+\sum_{n=1}^{B_t} \nu_{\sigma_n}-\nu_{\sigma_{n}-}+\sum_{n=1}^{C_t} \nu_{\tau_n-}-\nu_{\tau_{n-1}}.
\end{align*}

At resampling times $\rho_n$, we have $\Pi_{\rho_n}^B - \Pi_{\rho_n-}^B = 0$ and hence,
\begin{align}
\nu_{\rho_n}^f - \nu_{\rho_n-}^f
&= \Pi_{\rho_n}^A\Pi_{\rho_n}^B\mu_{\rho_n}^f-\Pi_{\rho_{n-1}}^A\Pi_{\rho_{n-1}}^B\mu_{\rho_n-}^f \notag\\
 &= \Pi_{\rho_n}^A\Pi_{\rho_n}^B(\mu_{\rho_n}^f - \mu_{\rho_n-}^f) + \Pi_{\rho_n}^B\mu_{\rho_n-}^f\left(\Pi_{\rho_n}^A - \Pi_{\rho_{n-1}}^A \right)\notag \\
&= \Pi_{\rho_n}^A\Pi_{\rho_n}^B \,Q_{T-\rho_n}f(X_{\rho_n}^{i_{n-1}})+ \Pi_{\rho_n}^B\mu_{\rho_n-}^f\left(\Pi_{\rho_n}^A - \Pi_{\rho_{n-1}}^A \right)\label{nu2}
\end{align}
For the increment of $\Pi^A$, we note that
\begin{equation*}
	\Pi_{\rho_n}^A-\Pi_{\rho_{n-1}}^A = \Pi_{\rho_n}^A-\left(\frac{\bar N_{\rho_n}}{\bar N_{\rho_n}-1}\right)\Pi_{\rho_n}^A=-\frac{1}{\bar N_{\rho_n}-1}\Pi_{\rho_n}^A.
\end{equation*}
Substituting this into~\eqref{nu2} and using~\eqref{mg2rho}, we obtain
\begin{align}
\nu_{\rho_n}^f - \nu_{\rho_n-}^f &= \Pi_{\rho_n}^A\Pi_{\rho_n}^B\left(Q_{T-\rho_n}f(X_{\rho_n}^{i_{n-1}}) - \frac{1}{\bar N_{\rho_n}-1}\mu_{\rho_n-}^f \right) \notag\\
&= \Pi_{\rho_n}^A\Pi_{\rho_n}^B\left(\mathcal{M}_{\rho_n} - \mathcal{M}_{\rho_n-}\right).
\end{align}

Similarly, for $n \in \{1, \dots, B_t\}$,
\begin{align}
\nu_{\sigma_n}^f - \nu_{\sigma_n-}^f = \Pi_{\sigma_n}^A\Pi_{\sigma_n}^B(\mu_{\sigma_n}^f - \mu_{\sigma_n-}^f) + \Pi_{\sigma_n}^A\mu_{\sigma_n-}^f\left(\Pi_{\sigma_n}^B - \Pi_{\sigma_n-}^B \right). \label{nu3}
\end{align}
In this case, we have
\begin{equation}
\Pi_{\sigma_n}^B - \Pi_{\sigma_n-}^B = \Pi_{\sigma_n}^B-\left(\frac{\bar N_{\sigma_n}}{\bar N_{\sigma_n}+1}\right)\Pi_{\sigma_n}^B = \frac{1}{\bar N_{\sigma_n}+1}\Pi_{\sigma_n}^B,
\end{equation}
and
\begin{equation}
\mu_{\sigma_n}^f - \mu_{\sigma_n-}^f =  - Q_{T-\sigma_n}f(X_{\sigma_n}^{j_{n-1}}),
\end{equation}
where $j_{n-1}$ is the index of the particle removed from the system at time $\sigma_n$. Again, substituting these equalities back into~\eqref{nu3} and using~\eqref{mg2sigma} gives,
\begin{align}
\nu_{\sigma_n}^f - \nu_{\sigma_n-}^f &= \Pi_{\sigma_n}^A\Pi_{\sigma_n}^B\left( - Q_{T-\sigma_n}f(X_{\sigma_n}^{j_{n-1}})+\frac{1}{\bar N_{\sigma_n}+1}\mu_{\sigma_n-}^f\right)\notag\\
&=  \Pi_{\sigma_n}^A\Pi_{\sigma_n}^B(\mathcal{M}_{\sigma_n} - \mathcal{M}_{\sigma_n-}).
\end{align}
Finally, we have $\nu_{\tau_n-}-\nu_{\tau_{n-1}}=\Pi^A_{\tau_{n-1}}\Pi^B_{\tau_{n-1}}(\mathbb M_{\tau_n}-\mathbb M_{\tau_{n-1}})$ for all $n\geq 1$, hence
\begin{align*}
\nu_t^f - \nu_0^f &= \Pi_t^A\Pi_t^B(\mathbb M_t-\mathbb M_{\tau_{C_t}})+\sum_{n=1}^{A_t} \Pi_{\rho_n}^A\Pi_{\rho_n}^B\left(\mathcal{M}_{\rho_n} - \mathcal{M}_{\rho_n-}\right)+\sum_{n=1}^{B_t} \Pi_{\sigma_n}^A\Pi_{\sigma_n}^B(\mathcal{M}_{\sigma_n} - \mathcal{M}_{\sigma_n-})\\
&\phantom{ \Pi_t^A\Pi_t^B(\mu_t-\mu_{\tau_{C_t}}}\phantom{+\sum_{n=1}^{B_t} \Pi^A_{\sigma_{n-1}}\Pi^B_{\rho_{n-1}}(\mathbb M_{\rho_n}-\mathbb M_{\rho_{n-1}})}
+\sum_{n=1}^{C_t}\Pi^A_{\tau_{n-1}}\Pi^B_{\tau_{n-1}}(\mathbb M_{\tau_n}-\mathbb M_{\tau_{n-1}}),
\end{align*}
which entails~\eqref{mgdecomp2} and thus concludes Step~3.

\bigskip
\noindent\textit{Step 4. Control of the $L^2$ norm of the martingale increments.}
In this section, we assume without loss of generality (taking $e^{-\|b\|_\infty T}f/\|f\|_\infty$ instead of $f$) that
\[
\sup_{t\in[0,T]} \|Q_t f\|_{\infty}\leq 1.
\]

We now show that the $L^2$ norm of each of these quantities on the righthand side of \eqref{mgdecomp2} is finite. To this end, let us first consider $\mathcal{M}_{\rho_n} - \mathcal{M}_{\rho_n-}$. We have, according to~\eqref{mg2rho},
\begin{align}
|\mathcal{M}_{\rho_n} - \mathcal{M}_{\rho_n-}|&=\left| Q_{T-\rho_n}f(X_{\rho_n}^{i_{n-1}}) - \frac{1}{\bar N_{\rho_n} - 1}\sum_{i\in \bar S_{\rho_n-}\setminus\{i'_{n-1}\}}Q_{T-\rho_n}f(X_{\rho_n}^{i})\right|
 \le 2.
\end{align}
Then, setting $R_T=\max_{t\in[0,T]} N_t$, we obtain
\begin{align}
\mathbb{E}&\left[\left(\sum_{n=1}^{A_T}\Pi_{\rho_n}^A\Pi_{\rho_n}^B\left(\mathcal{M}_{\rho_n} - \mathcal{M}_{\rho_n-}\right) \right)^2 \right]\notag \\
&\quad\quad=\mathbb{E}\left[\left(\frac{\bar N_{\rho_n} - 1}{\bar N_{\rho_n}}\,\sum_{n=1}^{A_T}\Pi_{\rho_n-}^A\Pi_{\rho_n-}^B\left(\mathcal{M}_{\rho_n} - \mathcal{M}_{\rho_n-}\right) \right)^2 \right]\notag \\
&\quad\quad \le 4 \,
\mathbb{E}\left[ \sum_{n=1}^{A_T}\left(\frac{R_T - 1}{R_T}\right)^{2 A_{\rho_n}}(\Pi_{\rho_n-}^B)^2\right]\notag\\
&\quad\quad \le 4\,
\mathbb{E}\left[ (\Pi_T^B)^2\,\sum_{n=1}^{A_T}\left(\frac{R_T - 1}{R_T}\right)^{2 n}\right].\label{bounda1} 
\end{align}
To control the sum term, since $1/R_T \le \frac12$, we have
\begin{align*}
  \sum_{n=1}^{A_T} \left(1-\frac{1}{R_T}\right)^{2n}
  &\leq \sum_{n=1}^{+\infty} \left(1-\frac{1}{R_T}\right)^{2n}= \frac{(1-\frac{1}{R_T})^2}{1-(1-\frac{1}{R_T})^2}\\
  &= R_T\frac{1-2/R_T+R_T^{-2}}{2-\frac{1}{R_T}}
  \leq R_T/2\leq (N_0+\beta_T)/2,
\end{align*}
where $\beta_T$ denotes the total number of branching events before time $T$.
We now make use of Lemma~\ref{lem:usefulbound}, stated and proved at the end of this section, which entails that
\begin{align}
\label{boundmomB}
\mathbb{E}\left[(\Pi_T^B)^2 (N_0+\beta_T)/2\right]&\leq  N_0\exp(c \|b\|_\infty  T),
\end{align}
for some (explicit) constant $c>0$.
Substituting these estimates into \eqref{bounda1}, we obtain
\begin{align}
\mathbb{E}\left[\left(\sum_{n=1}^{A_T}\Pi_{\rho_n}^A\Pi_{\rho_n}^B\left(\mathcal{M}_{\rho_n} - \mathcal{M}_{\rho_n-}\right) \right)^2 \right]
 \le 4\,N_0 \,
 \exp\left(c \|b\|_{\infty}T\right).\label{bounda2}
\end{align}

\bigskip

Let us now consider $\Delta_{\sigma_n}\mathcal{M}$. Using ~\eqref{mg2sigma}, we have 
\begin{align*}
|\mathcal{M}_{\sigma_n}-\mathcal{M}_{\sigma_{n-}}|&\le 2.
\end{align*}
Then
\begin{align}
\mathbb{E}&\left[\left(\sum_{n=1}^{B_T}\Pi_{\sigma_n}^A\Pi_{\sigma_n}^B(\mathcal{M}_{\sigma_n}-\mathcal{M}_{\sigma_{n-}})\right)^2 \right]\\
&\quad\quad\leq \mathbb{E}\left[\left(\sum_{n=1}^{B_T}\Pi_{\sigma_{n-}}^A\Pi_{\sigma_{n-}}^B\,\frac{N_{\sigma_{n-}}+1}{N_{\sigma_{n-}}}(\mathcal{M}_{\sigma_n}-\mathcal{M}_{\sigma_{n-}}) \right)^2 \right]\notag \\
&\quad\quad \le 4\,
\mathbb{E}\left[ \sum_{n=1}^{B_T}\left(\Pi^B_{\sigma_n}\right)^2\right] \le 
4\,\mathbb{E}\left[ \beta_T\,\left(\Pi^B_T\right)^2\right].\label{boundb1}
\end{align}
Using Lemma~\ref{lem:usefulbound} stated and proved at the end of this section, we deduce that
\begin{align}
\mathbb{E}\left[\left(\sum_{n=1}^{B_T}\Pi_{\sigma_n}^A\Pi_{\sigma_n}^B(\mathcal{M}_{\sigma_n}-\mathcal{M}_{\sigma_{n-}})\right)^2 \right]
 \le 4\, N_0 \exp(c \|b\|_\infty T),\label{bounda3}
\end{align}
where $c$ can be chosen to be the same as in~\eqref{bounda2}.

\medskip

It remains to control the increments of $\mathbb{M}$. For all $n\geq 1$, let $E_n\subset \mathbb N$ be the set of indices of particles defined by
\[
E_n=\{i\in\mathbb N\text{ s.t. the $i^{th}$ particle is branching, killed, born, removed or duplicated at time $\tau_n$}\}.
\]
Observe that $|E_n|\leq 4$ for all $n\geq 1$. We also define, for all $i\in\mathbb N$, the sequence of times $(\tau_n^i)_{n\geq 0}$ by $\tau^i_0=0$ and
\begin{align*}
\tau_{n+1}^i=\min\{\tau_k>\tau^i_n,\text{ with }k\geq 0\text{ and }i\in E_k\}.
\end{align*}
We write 
\begin{align*}
\mathbb M_T=\sum_{i\in\mathbb N}\sum_{n\geq 1} \mathbb M^i_{\tau_{n}^i\wedge T}-\mathbb M^i_{\tau_{n-1}^i\wedge T}.
\end{align*}
Since all the martingales increments are orthogonal, we have, for all $i\in\mathbb N$,
\begin{multline*}
\Delta:=\mathbb E\left[\left(\Pi_T^A\Pi_T^B(\mathbb M_T-\mathbb M_{\tau_{C_T}})+\sum_{n=1}^{C_T}\Pi^A_{\tau_{n-1}}\Pi^B_{\tau_{n-1}}(\mathbb M_{\tau_n}-\mathbb M_{\tau_{n-1}})\right)^2\right]\\
\begin{aligned}
&=\mathbb E\left[\left(\sum_{i\in\mathbb N}\sum_{n\geq 1} \Pi_{\tau_{n-1}^i}^{A}\Pi_{\tau_{n-1}^i}^{B}(\mathbb M^i_{\tau_{n}^i\wedge T}-\mathbb M^i_{\tau_{n-1}^i\wedge T})\right)^2\right]\\
&=\mathbb E\left[\sum_{i\in \mathbb N}\sum_{n\geq 1}\left( \Pi_{\tau_{n-1}^i}^{A}\Pi_{\tau_{n-1}^i}^{B}\right)^2\left(\mathbb M^i_{\tau_{n}^i\wedge T}-\mathbb M^i_{\tau_{n-1}^i\wedge T}\right)^2\right].
\end{aligned}
\end{multline*}
But the increments $\mathbb M^i_{\tau_{n+1}^i\wedge T}-\mathbb M^i_{\tau_n^i\wedge T}$ are almost surely bounded by $3$, so that
\begin{align}
\Delta&\leq 9\, \mathbb E\left[\sum_{i\in \mathbb N}\sum_{n\geq 1}\left( \Pi_{\tau_{n-1}^i}^{A}\Pi_{\tau_{n-1}^i}^{B}\right)^2\ind_{\tau_{n-1}^i<T}\right]\notag\\
&\leq 9\, \mathbb E\left[\sum_{n\geq 1}\sum_{i\in E_n}\left( \Pi_{\tau_{n-1}}^{A}\Pi_{\tau_{n-1}}^{B}\right)^2\ind_{\tau_{n-1}<T}\right]\notag\\
&\leq 36\, \mathbb E\left[\sum_{n\geq 1}\left( \Pi_{\tau_{n-1}}^{A}\Pi_{\tau_{n-1}}^{B}\right)^2\ind_{\tau_{n-1}<T}\right]\notag
\end{align}

 Hence, denoting by $\beta_T$ (resp. $\kappa_T$) the number of branching (resp. killing without resampling) events occuring before time $T\geq 0$ (so that $C_T=\beta_T+\kappa_T+A_T$), we have
\begin{align}
\label{eq:Deltabound}
\Delta\leq 36\, \mathbb{E}\left[ \left(\Pi_T^B\right)^2 (1+\beta_T+\kappa_T+\sum_{n=1}^{A_T} \left(\Pi^A_{\rho_{n-1}}\right)^2)\right]
\end{align}
We already computed that, almost surely,
\[
\sum_{n=1}^{A_T} \left(\Pi^A_{\rho_{n-1}}\right)^2\leq \sum_{k=0}^{+\infty} \left(\frac{R_T-1}{R_T}\right)^{2k}\leq R_T/2\leq (N_0+\beta_T)/2.
\]
Now, observe that the number of particles in the system as time $T$ is upper bounded by $N_0+\beta_T-\kappa_T$ and lower bounded by $0$, so that $\kappa_T\leq  N_0+\beta_T$, and hence
\begin{multline*}
\mathbb{E}\left[ \left(\Pi_T^B\right)^2 (1+\beta_T+\kappa_T+\sum_{n=1}^{A_T} \left(\Pi^A_{\rho_{n-1}}\right)^2)\right]
\leq \mathbb{E}\left[ \left(\Pi_T^B\right)^2(\beta_T+3N_0)\right]
\leq 4 N_0 \exp(c\|b\|_\infty T),
\end{multline*}
where we used, as above, Lemma~\ref{lem:usefulbound}. 
 We thus obtained, using the last inequality and~\eqref{eq:Deltabound}, that
\begin{align*}
\Delta
\leq 144 \, N_0 \,\exp\left(c\|b\|_\infty T\right).
\end{align*}

Using~\eqref{mgdecomp2},~\eqref{bounda2},~\eqref{bounda3} and the last  inequality, we deduce that
\begin{align*}
\E\left[\left(\nu_{T}^f - \nu_{0}^f\right)^2\right]\leq 152 N_0  \exp\left(c \|b\|_\infty T\right).
\end{align*}

Without assuming that $\sup_{t\in[0,T]} \|Q_t f\|_{\infty}\leq 1$, we thus obtain
\begin{align}
\label{eq:endofstep4}
\E\left[\left(\nu_{T}^f - \nu_{0}^f\right)^2\right]\leq  152 N_0  \exp\left(c \|b\|_\infty T\right) \left(\sup_{t\in[0,T]} \|Q_t f\|_{\infty}\right)^2.
\end{align}

\bigskip\noindent\textit{Step 5. Conclusion.}

Recall the occupation measure $m_t$ defined in \eqref{occmeas} for all $t\geq 0$. With this notation, the conclusion of Step~4 reads
\[
\E\left[\left(m_0Q_T f - \Pi_T^A\Pi_T^B m_T f\right)^2\right]\leq  152 N_0  \exp\left(c \|b\|_\infty T\right) \left(\sup_{t\in[0,T]} \|Q_t f\|_{\infty}\right)^2.
\]
We deduce that
\begin{align*}
&\left\| m_0 Q_T\mathbf{1}_E \, \frac{m_T f}{m_T\mathbf{1}_E}\mathbf 1_{\bar N_T\neq 0} -m_0Q_T f \right\|_2 \\
&\hspace{2cm}\leq \left\|(m_0 Q_T\mathbf{1}_E-\Pi_T^A\Pi_T^B m_T\mathbf{1}_E)\,\frac{m_T f}{m_T\mathbf{1}_E}\mathbf 1_{\bar N_T\neq 0}\right\|_2+\left\|\Pi_T^A\Pi_T^B m_T f-m_0Q_T f\right\|_2\\
&\hspace{2cm}\leq 2\,\sqrt{152 N_0} \exp\left(c \|b\|_\infty T/2\right) \|f\|_\infty\sup_{t\in[0,T]} \|Q_t \ind_E\|_{\infty}.
\end{align*} 
We conclude that
\begin{align*}
	\left\|\frac{m_0Q_T f}{m_0 Q_T\mathbf{1}_E}- \frac{m_Tf}{m_T(\mathbf{1}_E)}\mathbf 1_{\bar N_T\neq 0}\right\|_2&\leq \frac{2\,\sqrt{152} \exp\left(c \|b\|_\infty T/2\right) \|f\|_\infty\sup_{t\in[0,T]} \|Q_t \ind_E\|_{\infty}}{ m_0 Q_T\mathbf{1}_E/N_0}\,\frac{1}{\sqrt{N_0}}.
\end{align*}
Observing that $\sup_{t\in[0,T]} \|Q_t \ind_E\|_{\infty}\leq e^{\|b\|_\infty T}$, this concludes the proof of Theorem~\ref{thm:main}.
\end{proof}

\bigskip

\begin{proof}[Proof of Lemma~\ref{lem:usefulbound}]
    We use the same notations as in the proof of Theorem~\ref{thm:main}.

    Fix $C> c_\ell \|b\|_\infty$ and $\delta>0$ such that $C-\delta C^2 2^\ell\geq c_\ell \|b\|_\infty$. We have
    \begin{align*}
    \E\left(\left(\Pi^B_{\tau_1\wedge\delta}\right)^\ell e^{-C(\tau_1\wedge\delta)}\right)
    &=\E\left(\left(\frac{N_0+1}{N_0}\right)^\ell\mathbf 1_{\sigma_1=\tau_1\wedge\delta}e^{-C(\tau_1\wedge\delta)}\right)+\E\left(\mathbf 1_{\sigma_1>\tau_1\wedge\delta}e^{-C(\tau_1\wedge\delta)}\right)\\
    &=\left(\frac{N_0+1}{N_0}\right)^\ell\E\left(e^{-C(\tau_1\wedge\delta)}\right)+\left(1-\left(\frac{N_0+1}{N_0}\right)^\ell\right)\,\E\left(\mathbf 1_{\sigma_1>\tau_1\wedge\delta}e^{-C(\tau_1\wedge\delta)}\right).
    \end{align*}
    On the one hand, we have 
    \begin{align*}
    \E\left(e^{-C(\tau_1\wedge\delta)}\right)\leq 1-e^{-C\delta}C\E(\tau_1\wedge\delta)\leq 1-(C-\delta C^2)\E(\tau_1\wedge\delta),
    \end{align*}
    and, on the other hand,
    \begin{align*}
    \E\left(\mathbf 1_{\sigma_1>\tau_1\wedge\delta}e^{-C(\tau_1\wedge\delta)}\right)\geq \E\left(e^{-(\tau_1\wedge \delta) (N_0\|b\|_\infty+C)}\right)\geq 1-(N_0\|b\|_\infty+C)\E\left(\tau_1\wedge \delta \right).
    \end{align*}
    Setting, for the sake of readability, $\alpha=\E(\tau_1\wedge \delta)$ and $\beta=\left(\frac{N_0+1}{N_0}\right)^\ell$, we deduce that
    \begin{align}
    \E\left(\left(\Pi^B_{\tau_1\wedge\delta}\right)^\ell e^{-C(\tau_1\wedge\delta)}\right)
    &\leq 1-\alpha\left(C-\delta C^2\beta+N_0\|b\|_\infty(1-\beta)\right)\notag\\
    &\leq 1-\alpha\left(C-\delta C^22^\ell-\|b\|_\infty c_\ell\right)\leq 1,\label{eq:step1}
    \end{align}
    where we used the fact that $\beta\leq2^\ell$ and $\beta-1\leq c_\ell/N_0$.

    We define the sequence of stopping times $(\theta_n)_{n\geq 0}$ by $\theta_0=0$ and
    \begin{align*}
    \theta_{n+1}:=\inf\{((n+1)\delta)\wedge \tau_k,\ k\geq 1, \tau_k>\theta_n\}.
    \end{align*}
    Then 
    \begin{multline*}
    \E\left(\left(\Pi_{T\wedge \theta_{n+1}}^B\right)^\ell e^{-C(T\wedge \theta_{n+1})}\mid \mathcal F_{T\wedge \theta_{n}}\right)
    =\left(\Pi_{T\wedge \theta_{n}}^B\right)^\ell\,e^{-C(T\wedge \theta_{n})}\\
    \times \E_{\mathbb X_{T\wedge \theta_{n}}}\left(\left(\Pi_{(T-u)\wedge\theta_1}^B\right)^\ell e^{-C((T-u)\wedge \theta_{1})}\right)_{\vert u=T\wedge \theta_{n}}.
    \end{multline*}
    Since $T-u\leq \delta$ almost surely in the right hand term, we deduce from~\eqref{eq:step1} that
    \begin{align*}
    \E\left(\left(\Pi_{T\wedge \theta_{n+1}}^B\right)^\ell e^{-C(T\wedge \theta_{n+1})}\mid \mathcal F_{T\wedge \theta_{n}}\right)
    &\leq \left(\Pi_{T\wedge \theta_{n}}^B\right)^\ell \,e^{-C(T\wedge \theta_{n})}.
    \end{align*}
    In particular, using Fatou's Lemma, we deduce, letting $n\to+\infty$, that
    \begin{align*}
    \E\left(\left(\Pi_{T}^B\right)^\ell\,e^{-CT}\right)\leq 1.
    \end{align*}
    Since this is true for any $C> c_\ell \|b\|_\infty$, this concludes the proof of the first part of the lemma.
    
    For the second part, we observe that $\beta_T$ is stochastically dominated by a continuous time counting process $(Z_t)_{t\geq 0}$ on $\mathbb N$ starting from $N_0$ and with increment rate $\|b\|_\infty n$ from state $n$ to state $n+1$. In particular, denoting by $L$ the infinitesimal generator of this process and $V(n)=n^2$ for all $n\geq 1$, we obtain
    \[
    LV(n)=\|b\|_\infty n((n+1)^2-n^2)=\|b\|_\infty n(2n+1)\leq 3\|b\|_\infty V(n),
    \]
    and deduce that 
    \[
    \E(\beta_T^2)\leq \E(Z_T^2)\leq e^{3\|b\|_\infty T} Z_0^2=e^{3\|b\|_\infty T} N_0^2.
    \]
    Hence, we obtain
    \begin{align*}
    \E\left(\beta_T\left(\Pi_T^B\right)^2\right)\leq \sqrt{\E(\beta_T^2)}\sqrt{\E\left(\left(\Pi_T^B\right)^4\right)}\leq N_0\exp((3+c_4) \|b\|_\infty T/2).
    \end{align*}
    This concludes the proof of the second part of the Lemma.
\end{proof}

\begin{rem}  
    Our proof adapts to the case where $\tau_\infty<+\infty$ with positive probability (that is when Assumption~2 does not holds true), using the following strategy. In this situation, one shows using the same approach that, for all $n\geq 0$,
    \[
    m_0 Q_T f=\E\left(\Pi^A_{\tau_n\wedge T}\,\Pi^B_{\tau_n\wedge T} m_{\tau_n\wedge T} Q_{T-\tau_n\wedge T}f\right).
    \]
    Since the branching rate is uniformly bounded and since $\Pi^A_{\tau_n\wedge T}$ is bounded by $1$, one checks that $\Pi^A_{\tau_n\wedge T}\,\Pi^B_{\tau_n\wedge T} m_{\tau_n\wedge T} Q_{T-\tau_n\wedge T}f$ can be uniformly bounded (in $n$) by an integrable random variable. Since in addition $\Pi^A_{\tau_n\wedge T}\,\Pi^B_{\tau_n\wedge T} m_{\tau_n\wedge T} Q_{T-\tau_n\wedge T}f\to 0$ on the event $\tau_\infty\leq T$ (since this events necessarily corresponds to the accumulation of resampling events) and $\Pi^A_{\tau_n\wedge T}\,\Pi^B_{\tau_n\wedge T} m_{\tau_n\wedge T} Q_{T-\tau_n\wedge T}f\to \Pi^A_T\,\Pi^B_T m_T f$ on the complementary event, one deduces from the dominated convergence theorem that
    \[
    m_0 Q_T f=\E\left(\Pi^A_T\,\Pi^B_T m_T f \mathbf 1_{T<\tau_\infty}\right).
    \]
    The rest of the proof (see Steps~4 and~5) then proceeds almost identically.
\end{rem}

\begin{rem}
    We comment on some direct and natural continuations of the proof, which are easily derived from the fact that the proof is based on the control of martingale increments. 
    
    First, if the martingale $\nu_t^f-\nu_0^f$ is c\`adl\`ag, then Doob's maximal inequality and~\eqref{eq:endofstep4} entail that
    \[
    \E\left[\sup_{t\in[0,T]} \left(\nu_{T}^f - \nu_{0}^f\right)^2\right]\leq  c_T\,\|f\|_\infty^2 \sup_{t\in [0,T]}\Vert Q_t\mathbf{1}_E\Vert_{\infty}^2 N_0
    \]
    for some (explicit) constant $c_T$.
    Then the conclusion reads
    \begin{align*}
    \left\|\sup_{t\in{[0,T]}} \left|\frac{m_0Q_t f}{m_0 Q_t\mathbf{1}_E}- \frac{m_t f}{m_t(\mathbf{1}_E)}\mathbf 1_{\bar N_t\neq 0}\right|\right\|_2&\leq \frac{c_T\,\|f\|_\infty}{m_0 Q_T\mathbf{1}_E/N_0}\,\frac{1}{\sqrt{N_0}},
    \end{align*}

    Second, if the killing rate is bounded, then the statement of Lemma~\ref{lem:usefulbound} holds true with $\beta_T$ replaced by the total number of killing/branching events $C_T$, and with different constants. Then, for any $p\geq 2$, using the same approach as in Step~4 of the proof, but using the Burkholder-Davis-Gundy inequality instead, one deduces similarly that 
    \[
    \E\left[\left|\nu_{T}^f - \nu_{0}^f\right|^p\right]\leq c_T N_0^{p/2},
    \]
    and hence
    \begin{align*}
    \left\|\frac{m_0Q_t f}{m_0 Q_t\mathbf{1}_E}- \frac{m_t f}{m_t(\mathbf{1}_E)}\right\|_p&\leq \frac{c'_T\,\|f\|_\infty}{ m_0 Q_T\mathbf{1}_E/N_0}\,\frac{1}{\sqrt{ N_0}},
    \end{align*}
    for some (explicit) constants $c_T$ and $c'_T$. 
    Note that the assumption that the killing rate is uniformly bounded is technical: after using Burkholder-Davis-Gundy inequality, one needs to control the term $\E\left(C_T^{p/2}\left(\frac{N+1}{N}\right)^{pC_T}\right)$, however, depending on the particular process at hand, there may be other ways to obtain upper bounds on this quantity which do not require the killing rate to be bounded.
\end{rem}

\begin{acks}[Acknowledgments]
The authors would like to thank the anonymous referee for their constructive comments that improved the exposition and quality of this paper.
\end{acks}

\begin{funding}
The first author was support by EPSRC Grants EP/P009220/1 and EP/W026899/1. The second author was also support by EPSRC grant EP/W026899/1.
\end{funding}


\bibliographystyle{imsart-number} 
\bibliography{biblio-denis}       

%
%
%
%
%
\end{document}